\DeclareMathOperator{\Ker}{Ker}
\DeclareMathOperator{\nul}{nul}
\DeclareMathOperator{\rk}{rk}  
\DeclareMathOperator{\vol}{vol}
\DeclareMathOperator{\spn}{span}
\DeclareMathOperator{\Coh}{Coh}
\DeclareMathOperator{\Spec}{Spec}
\newcommand{\unit}{1\!\!1}
\newcommand{\Lat}{\normalfont\textbf{Lat}}
\newtheorem{theorem}{Theorem}[section]
\newtheorem{corollary}[theorem]{Corollary}
\newtheorem{lemma}[theorem]{Lemma}
\newtheorem{proposition}[theorem]{Proposition}
\theoremstyle{definition}
\newtheorem{definition}[theorem]{Definition}
\theoremstyle{remark}
\newtheorem{remark}[theorem]{Remark}
\begin{document}
	
\title{Numerical Cohomology}
\author{Thomas McMurray Price}
\email{tom.price.math@gmail.com}
\address{Toronto, Canada}

\classification{14G40, 18G99}
\keywords{arithmetic cohomology, homological algebra, Arakelov bundle, lattice}

\begin{abstract}

We develop a numerical approach to cohomology. Essentially, vector spaces and linear maps are replaced by real numbers, which represent dimensions of vector spaces and ranks of linear maps. We use this to refine ideas of Van der Geer and Schoof about the cohomology of Arakelov bundles.

\end{abstract}

\maketitle

\section{Introduction}

	It is well known that, to an arbitrary Arakelov divisor $D$, it is natural to associate a corresponding lattice $L$. In \cite{VDGS}, Van der Geer and Schoof propose a definition of $h^0(D)$ in terms of the Gaussian sum on $L$. This is ``the arithmetic analogue of the dimension of the vector space $H^0(D)$ of sections of the line bundle associated to a divisor $D$ on an algebraic curve.'' They also define $h^1(D)$ as $h^0(K - D)$, with $K$ the canonical divisor. For a higher-rank Arakelov bundle $M$ (in other words, a metrized $O_F$-module, with $F$ a number field), the quantities $h^0(M)$ and $h^1(M)$ can be defined in an analogous way.
	
	The ideas proposed in \cite{VDGS} add interesting new connections to the already rich analogy between number fields and function fields of curves; for example, the Poisson summation formula yields an analogue of the Riemann Roch formula. However, there is a drawback: although $h^0$ and $h^1$ behave in some ways like dimensions of cohomology vector spaces, the definition given in \cite{VDGS} is ad-hoc, doesn't resemble other approaches to cohomology, and has little hope of generalizing to higher dimensions since it depends on properties unique to curves.
	
	This paper is motivated by the following question: can the cohomology of Arakelov bundles be unified with other approaches to cohomology? Since these quantities can be arbitrary non-negative real numbers, they can't literally be realized as dimensions of vector spaces, so this rules out any straightforward approach. We address this by developing an approach to cohomology that cuts out the middleman of vector spaces and deals directly with quantities that act like vector space dimensions and ranks of linear maps.
		
	For simplicity, in this paper, we'll restrict our attention to Arakelov bundles over $\mathbb{Z}$. In other words, lattices. However, one can see without much difficulty how these ideas apply just as well to Arakelov bundles over other number fields. One could also note that, according to the number field - function field analogy, the cohomology of an Arakelov bundle should just be the same as the cohomology of the underlying lattice, since the direct image functor is exact for a finite morphism of curves.
	
	In section 2, we introduce the concept of a \textbf{numerical exact sequence}, which is a sequence of real numbers that behaves like the dimensions of vector spaces lying in an exact sequence. This allows us to define a \textbf{numerical $\delta$-functor}, a numerical analogue of the $\delta$-functors introduced by Grothendieck in \cite{Grothendieck}. We introduce the Euler characteristic function associated with a numerical $\delta$-functor. We also prove a numerical analogue of the fact that effaceable $\delta$-functors are universal. This gives us a condition which, as we'll see later, uniquely determines both the quantities $h^k$ of a lattice and $h^k$ of a coherent sheaf on a projective scheme over a field (see \ref{def:coh_exists}, \ref{def:coh_exists2}, \ref{thm:coh_exists}, and \ref{thm:coh_exists2}).
	
	In section 3, we state a few basic category-theoretic definitions and theorems. This is just for convenience in later sections; everything in this section is unoriginal and fairly trivial.
	
	In section 4, we introduce the concept of a \textbf{rank}. This is a function on the morphisms of a category which generalizes the rank of a linear map. For our purposes, the role of a left-exact functor will be played by a \textbf{left-exact rank}, a certain kind of rank which acts like the composite of a left-exact functor with the rank of a linear map.
	
	In section 5, we show how different ideas in previous chapters relate to coherent sheaves on a projective scheme over a field and their cohomology. We show that the quantities $h^k$ in this setting are uniquely determined by a condition introduced in section 4. We'll show later that the quantities $h^k$ of a lattice are uniquely determined by the same condition (see \ref{def:coh_exists}, \ref{def:coh_exists2}, \ref{thm:coh_exists}, and \ref{thm:coh_exists2}).
	
	In section 6, we introduce a numerical analogue of the zig-zag lemma, in a general category-theoretic setting. The main new ideas introduced in the previous chapters are numerical exact sequences and ranks; the numerical zig-zag lemma is where we see interaction between these two ideas.
	
	In section 7, we state a few basic theorems and definitions pertaining to lattices. As in section 3, this is mostly just for convenience in later sections.
	
	In section 8, we define a certain function on the morphisms of the category of lattices (this category was introduced in section 7). We apply a recent result of Regev and Stephens-Davidowitz \cite{RSD} to show that this function is a left-exact rank. This will allow us to apply the numerical zig-zag lemma to lattices in section 10.
	
	In section 9, we introduce \textbf{tight lattices}, and prove a few properties about them that we'll need later. Tightness is a sort of positivity condition for lattices. It's stable under tensor products; we'll also see later that sufficiently tight lattices have small $h^1$. Tight lattices are important in section 10: we make a tight resolution of a lattice as a step in defining its cohomology.
	
	In section 10, we define the sequence of functions $h = (h^k)$ on the objects of $\Lat$. Our approach resembles the general tendency in cohomology, where the cohomology of an object is defined by taking a resolution, applying a functor (or, in our case, a rank) to the resolution, and taking the cohomology of the resulting chain complex. We use the numerical zig-zag lemma to establish basic properties of $h$. Also, we show that the sequence of functions $h$ on $\Lat$ is, in a precise sense, analogous to $h$ on the category of coherent sheaves on a projective scheme over a field, since they are both uniquely determined by the same conditions (see \ref{def:coh_exists}, \ref{def:coh_exists2}, \ref{thm:coh_exists}, and \ref{thm:coh_exists2}).
	
	In section 11, we work out explicitly what $h^k$ and $\chi$ of a lattice is. We find that our definitions are consistent with the ones given in \cite{VDGS}.
	
	\subsection*{Related Work}
	
	This isn't the first paper that attempts to refine the definitions of $h^0$ and $h^1$ given in \cite{VDGS}. In \cite{Borisov}, Borisov introduces the concept of a ``ghost space'', which is essentially a group with fuzzy addition, where the sum of two elements is a measure rather than a point. He defines the cohomology of an Arakelov divisor as a certain sequence of ghost spaces. Another approach is given in \cite{Weng1} and \cite{Weng2}. For $\mathbb{A}$ the ring of adeles of a number field $F$, and $g$ in the general linear group $GL_r(\mathbb{A})$, Weng defines $H^0(F, g)$ and $H^1(F, g)$ as certain locally compact abelian groups related to $\mathbb{A}$. He defines the numbers $h^0(F, g)$ and $h^1(F, g)$ by integrating certain functions over $H^0(F, g)$ and $H^1(F, g)$ respectively. These ideas are applied in \cite{Weng2} to moduli spaces of semi-stable lattices and non-abelian zeta functions for number fields.
		
	This paper has similar motivations to the ones given above, but our approach has very little in common with them other than that. One of the main differences is that the papers above don't consider any analogue of the long exact sequence in cohomology associated with a short exact sequence, or of the zig-zag lemma. We do, and use this to give a condition that uniquely determines both the quantities $h^k$ of a lattice and $h^k$ of a coherent sheaf on a projective scheme over a field (see \ref{def:coh_exists}, \ref{def:coh_exists2}, \ref{thm:coh_exists}, and \ref{thm:coh_exists2}). We also require a property of the Gaussian function which isn't considered by the papers above, and in fact has only recently been proven (see \cite{RSD} and section 8 of this paper). We use it to show that a certain function on the morphisms of the category of lattices behaves like ``rank of the induced map on global sections''; this allows us to apply the numerical zig-zag lemma in section 10.

\section{\texorpdfstring{Numerical Exact Sequences and $\delta$-functors}%
	{Numerical Exact Sequences and Delta-functors}}

\begin{definition} \label{def:numex}
	A sequence $e_0, e_1, e_2, \ldots$ of real numbers is called a \textbf{numerical exact sequence} if all $e_k$ are nonnegative and the alternating sums $\sum\limits_{j=0}^{k} (-1)^j e_{k-j}$ are nonnegative for all $k \geq 0$. As an example (in fact, the example that inspired this definition), if we have an exact sequence of vector spaces ${0 \rightarrow V_1 \rightarrow V_2 \rightarrow \ldots}$  then $\dim(V_1), \dim(V_2), \ldots$ must be a numerical exact sequence: clearly all elements of the sequence are nonnegative, and the alternating sums represent the dimensions of the kernels of the morphisms, and therefore must be nonnegative.
\end{definition}

Let $O$ be a class and $E$ a class of ordered triples of elements of $O$. Later, we'll take $O$ to be the objects of a category and $E$ to be the triples $(A, B, C)$ for each short exact sequence $0 \rightarrow A \rightarrow B \rightarrow C \rightarrow 0$ in $O$, but we won't need any of that structure for this section.

\begin{definition} \label{def:numdelta}
	Suppose $T$ is a sequence of functions $(T^0, T^1, T^2, \ldots)$ from $O$ to the  nonnegative real numbers. We say $T$ is a \textbf{numerical $\delta$-functor} if, for any triple $(A, B, C) \in E$, the following is a numerical exact sequence:
	$$T^0(A), T^0(B), T^0(C), T^1(A), T^1(B), T^1(C), T^2(A) \ldots$$
\end{definition}

\begin{definition} \label{def:chi}
	Suppose that $T = (T^0, T^1, T^2, \ldots)$ is a numerical $\delta$-functor on $O$. Suppose additionally that, for any object $A \in O$, we have $T^i(A) = 0$ for sufficiently large $i$ (we'll say $T$ is \textbf{eventually $0$} whenever this holds). Then we define the \textbf{Euler characteristic} $\chi_T$ on $O$ as $\chi_T(A) = \sum\limits_{i=0}^{\infty} (-1)^i T^i(A)$. We'll sometimes refer to $\chi_T$ simply as $\chi$ when $T$ is implied by context.
\end{definition}

\begin{theorem} \label{thm:chi_additive}
	Suppose, as above, $T$ is a numerical $\delta$-functor and is eventually $0$. If $(A, B, C) \in E$, then $\chi_T(B) = \chi_T(A) + \chi_T(C)$.
\end{theorem}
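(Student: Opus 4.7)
The plan is to reduce the additivity of $\chi_T$ to a single auxiliary fact: any numerical exact sequence that is eventually zero has vanishing full alternating sum. Once this is established, additivity falls out by regrouping the numerical exact sequence supplied by the $\delta$-functor axiom into triples indexed by the cohomological degree $k$.

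First, I would unpack the hypothesis using Definition \ref{def:numdelta}. Writing $e_{3k}=T^k(A)$, $e_{3k+1}=T^k(B)$, $e_{3k+2}=T^k(C)$, the sequence $(e_i)_{i\geq 0}$ is a numerical exact sequence, and since $T$ is eventually $0$ there is some $N$ with $e_i=0$ for all $i\geq N$.

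Second, I would establish the auxiliary fact. Fix any $k\geq N$ and reindex the partial alternating sum from Definition \ref{def:numex} by $i=k-j$; it equals $\sum_{i=0}^{k}(-1)^{k-i}e_i$, which collapses to $(-1)^k\sum_{i=0}^{N-1}(-1)^i e_i$ because all terms with $i\geq N$ vanish. By Definition \ref{def:numex} this expression must be nonnegative at both some even $k\geq N$ and some odd $k\geq N$, forcing $\sum_{i=0}^{N-1}(-1)^i e_i=0$.

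Finally, I would translate back to $\chi_T$. Using $(-1)^{3k}=(-1)^k$, $(-1)^{3k+1}=-(-1)^k$, and $(-1)^{3k+2}=(-1)^k$, the vanishing just obtained rewrites as
\[
0 \;=\; \sum_{k=0}^{\infty}(-1)^k\bigl[T^k(A)-T^k(B)+T^k(C)\bigr] \;=\; \chi_T(A)-\chi_T(B)+\chi_T(C),
\]
which rearranges to the claim. The only step with any real content is the auxiliary fact, where the trick is to invoke the nonnegativity clause at both parities of $k$ past the vanishing threshold, pinning the full alternating sum to be exactly zero. Once that is noted, everything else is routine sign accounting, and I do not anticipate any serious obstacle.
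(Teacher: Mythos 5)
Your proof is correct and follows essentially the same approach as the paper: in both cases the key observation is that for $k$ past the vanishing threshold, the reversed alternating sum $\sum_{j=0}^k(-1)^j e_{k-j}$ equals $\pm(\chi_T(A)-\chi_T(B)+\chi_T(C))$ depending on the parity of $k$, and nonnegativity at both parities pins the value at zero. You merely package the argument slightly more modularly by isolating the auxiliary fact and spelling out the sign bookkeeping that the paper leaves implicit.
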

\begin{proof}
	Let $(e_k)_{k=0}^\infty = T^0(A), T^0(B), T^0(C), T^1(A), T^1(B), \ldots$. We then have, for sufficiently large $k$, that $\sum\limits_{j=0}^{k} (-1)^j e_{j} = \chi(A) - \chi(B) + \chi(C)$. So, for sufficiently large even $k$, the related sum $\sum\limits_{j=0}^{k} (-1)^j e_{k-j}$ is $\chi(A) - \chi(B) + \chi(C)$; for sufficiently large odd $k$ it is $-\chi(A) + \chi(B) - \chi(C)$. We then have, since $T$ is a numerical $\delta$-functor, $\chi(A) - \chi(B) + \chi(C) \geq 0$ and $-\chi(A) + \chi(B) - \chi(C) \geq 0$. It follows immediately that $\chi(A) - \chi(B) + \chi(C) = 0$, and so $\chi(B) = \chi(A) + \chi(C)$.
\end{proof}

\begin{definition} \label{def:eff}
	If $F$ is a function from $O$ to the nonnegative reals, we say $F$ is \textbf{effaceable} if for any $A \in O$ and any $\epsilon > 0$, there exists a triple $(A, B, C) \in E$ such that $F(B) \leq \epsilon$.
\end{definition}

\begin{definition} \label{def:eff_functor}
	We say a numerical $\delta$-functor $T$ is \textbf{effaceable} if $T^i$ is effaceable for all $i \geq 1$.
\end{definition}

The above definitions are numerical analogues of $\delta$-functors and effaceability, which were introduced by Grothendieck in \cite{Grothendieck}, and are also explained in \cite{Hartshorne}. It can be shown that, if $S$ and $T$ are effaceable $\delta$-functors with $S^0 \simeq T^0$, then $S \simeq T$. We have the following numerical analogue of this:

\begin{theorem} \label{thm:eff_equal}
	Suppose $S$ and $T$ are effaceable numerical $\delta$-functors, and $S^0 = T^0$. Then $S$ = $T$.
\end{theorem}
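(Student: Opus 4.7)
The plan is to induct on the cohomological degree $i$, proving $S^i(A) = T^i(A)$ for every $A \in O$. The base case $i = 0$ is the hypothesis $S^0 = T^0$. For the inductive step, fix $i \geq 1$, assume $S^j = T^j$ for all $j < i$, and fix $A \in O$ together with an arbitrary $\epsilon > 0$; it then suffices to show $|S^i(A) - T^i(A)| \leq \epsilon$.

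For the bound $T^i(A) \leq S^i(A) + \epsilon$, I would use effaceability of $T^i$ to pick $(A, B, C) \in E$ with $T^i(B) \leq \epsilon$. Nonnegativity of the alternating sum of $T$'s numerical exact sequence that ends at the position of $T^i(B)$ (the $(3i+1)$-st term, counting from $0$) rearranges to
$$T^i(A) \;\leq\; T^i(B) + L_T, \qquad L_T \;=\; \sum_{j=0}^{i-1} (-1)^{i-j+1}\bigl(T^j(C) - T^j(B) + T^j(A)\bigr).$$
Similarly, nonnegativity of the alternating sum of $S$'s sequence ending one position earlier, at $S^i(A)$ (the $(3i)$-th term), rearranges to
$$S^i(A) \;\geq\; L_S, \qquad L_S \;=\; \sum_{j=0}^{i-1} (-1)^{i-j+1}\bigl(S^j(C) - S^j(B) + S^j(A)\bigr).$$
By the inductive hypothesis $L_S = L_T$, so combining these two inequalities yields $T^i(A) \leq \epsilon + S^i(A)$. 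The reverse inequality $S^i(A) \leq T^i(A) + \epsilon$ follows by the symmetric argument, applying effaceability to $S^i$ on a possibly different triple.

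The main obstacle is purely bookkeeping: verifying that the tails $L_S$ and $L_T$ really do match term-by-term once the inductive hypothesis is applied. The signs work out because the two relevant alternating sums sit at adjacent positions of their respective sequences (so they differ by one overall sign flip), and this flip is compensated exactly by the fact that one inequality gives an upper bound on $T^i(A)$ while the other gives a lower bound on $S^i(A)$. I would sanity-check the formulas on the case $i=1$, where both $L_T$ and $L_S$ collapse to $T^0(A) - T^0(B) + T^0(C)$, before committing to the general indexing. Otherwise the argument is a direct numerical analogue of Grothendieck's classical proof, where effaceability produces a $B$ with $T^i(B)$ exactly zero rather than merely small, and one gets strict equalities in place of the $\epsilon$-estimates above.
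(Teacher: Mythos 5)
Your proof is correct and follows essentially the same route as the paper's: induct on degree, use effaceability to produce a triple $(A,B,C)$ whose middle term has small $T^i$ (resp.\ $S^i$), and compare the alternating-sum inequalities from the two $\delta$-functors at the adjacent positions $3i$ and $3i+1$, with the inductive hypothesis making the lower-order tails agree. The only presentational difference is that the paper simply adds the two alternating-sum inequalities and lets the tails cancel, rather than naming them $L_S$ and $L_T$ and observing $L_S = L_T$; the underlying calculation is identical.
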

\begin{proof}
Suppose $S^i = T^i$ for all $i < n$. For any $A \in O$ and any $\epsilon > 0$, we can find a triple $(A, B, C)$ with $S^n(B) \leq \epsilon$. From the induction hypothesis and the fact that $S$ is a numerical $\delta$-functor, we have the following inequalities:
\begin{equation} \label{eqn:ineq1}
S^n(B) - S^n(A) + S^{n-1}(C) - S^{n-1}(B) + S^{n-1}(A) + \ldots \geq 0
\end{equation}
\begin{equation} \label{eqn:ineq2}
T^n(A) - S^{n-1}(C) + S^{n-1}(B) - S^{n-1}(A) + \ldots \geq 0
\end{equation}

From adding \eqref{eqn:ineq1} and \eqref{eqn:ineq2}, we get that $S^n(B) - S^n(A) + T^n(A) \geq 0$, so $T^n(A) + \epsilon \geq T^n(A) + S^n(B) \geq S^n(A)$. Since this holds for any $\epsilon > 0$, $T^n(A) \geq S^n(A)$. By a similar argument, $S^n(A) \geq T^n(A)$, and so $S^n(A) = T^n(A)$. Since this holds for any $A$ in $O$, $S^n = T^n$. Therefore, by induction, $S = T$.
\end{proof}

\begin{definition} \label{def:coh_exists} 
	Suppose $\ell$ is a real-valued function on $O$. Then we say \textbf{cohomology exists} for $(O, E, \ell)$ if there exists an effaceable numerical $\delta$-functor $T = (T^0, T^1, \ldots)$ with $T^0 = \ell$. We have from \ref{thm:eff_equal} that, if such a $T$ exists, it is uniquely determined.
\end{definition}

\section{Category-Theoretic Preliminaries}

\begin{definition} \label{def:zero_func}
	Suppose  $\mathfrak{C}$ is a category with a zero object. For any objects $A$ and $B$ of $\mathfrak{C}$, there is a unique morphism from $A$ to $B$ that factors through a zero object. We'll denote it as $0_{AB}$, or sometimes simply as $0$ when it's clear that we're referring to a morphism from $A$ to $B$.
\end{definition}

\begin{definition} \label{def:chain}
	Suppose  $\mathfrak{C}$ is a category with a zero object, and we have a sequence of objects and morphisms $0 \xrightarrow{a_0} A_1 \xrightarrow{a_1} A_2 \xrightarrow{a_2} \ldots$ in $\mathfrak{C}$. We say this sequence is a \textbf{chain complex} if $a_{i+1} \circ a_i = 0$ for all i. We say that it's \textbf{exact} if, for all $i \geq 1$, the morphism $a_i$ factors as $g \circ f$ for some pair of morphisms $f$ and $g$, with $f$ a cokernel of $a_{i - 1}$ and $g$ a kernel of $a_{i+1}$. Clearly an exact sequence is also a chain complex. A \textbf{short exact sequence} is an exact sequence of the form $0 \rightarrow A \xrightarrow{f} B \xrightarrow{g} C \rightarrow 0$. In this case, $f$ must be a kernel of $g$ and $g$ must be a cokernel of $f$.
\end{definition}

It will be convenient to state here some basic facts about kernels that will be used later on. The next three results are fairly trivial and well-known so the proofs will be skipped.

\begin{proposition} \label{thm:zero_kernel}
	For any objects $A$ and $B$ of $\mathfrak{C}$, $\text{id}_A$ is a kernel of $0_{AB}$.
\end{proposition}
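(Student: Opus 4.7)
The plan is to verify directly that $\mathrm{id}_A$ satisfies the defining universal property of a kernel of $0_{AB}$. Recall that a kernel of a morphism $f : A \to B$ consists of a morphism $k : K \to A$ such that (i) $f \circ k$ is the zero morphism $0_{KB}$, and (ii) for every morphism $h : X \to A$ with $f \circ h = 0_{XB}$, there is a unique $\widetilde{h} : X \to K$ with $k \circ \widetilde{h} = h$.

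For the first condition, I would note that $0_{AB} \circ \mathrm{id}_A = 0_{AB}$, and that $0_{AB}$ is by definition a zero morphism, hence equals $0_{AB}$ viewed from $A$ to $B$. So (i) holds. For the second condition, I would first observe that the hypothesis $0_{AB} \circ h = 0_{XB}$ is automatic for every $h : X \to A$: since $0_{AB}$ factors through a zero object, so does $0_{AB} \circ h$, and uniqueness of the morphism $X \to B$ factoring through a zero object (Definition \ref{def:zero_func}) forces $0_{AB} \circ h = 0_{XB}$. With the hypothesis always satisfied, the factorization problem $\mathrm{id}_A \circ \widetilde{h} = h$ has the obvious unique solution $\widetilde{h} = h$.

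There is essentially no obstacle here: the proposition is a direct unwinding of definitions, and the only care required is to distinguish between the universal property's premise $f \circ h = 0$ (which in this case is vacuous) and the existence-uniqueness clause (which is trivial because the candidate kernel is the identity). This is consistent with the authors' remark that the statement is well-known and trivial.
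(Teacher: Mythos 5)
Your proof is correct and is the straightforward unwinding of the kernel universal property; the paper explicitly skips this proof as "fairly trivial and well-known," and your argument is exactly the standard one that a reader would supply. One small point worth emphasizing, which you do handle correctly: the condition $0_{AB} \circ h = 0_{XB}$ is automatic for every $h : X \to A$, so the verification reduces entirely to the trivial factorization through $\mathrm{id}_A$.
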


\begin{proposition} \label{thm:kernel_monic}
	Kernels are monic.
\end{proposition}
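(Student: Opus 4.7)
The plan is to unfold the universal property of a kernel directly. Suppose $k: K \to A$ is a kernel of a morphism $f: A \to B$, and suppose we are given two morphisms $u, v: X \to K$ satisfying $k \circ u = k \circ v$. The goal is to deduce $u = v$, i.e., that $k$ is monic.

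First I would set $h := k \circ u = k \circ v : X \to A$. Since $k$ is a kernel of $f$, we have $f \circ k = 0_{KB}$, and therefore $f \circ h = f \circ k \circ u = 0_{KB} \circ u$, which factors through a zero object and so equals $0_{XB}$ by the uniqueness built into Definition \ref{def:zero_func}.

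Next I would invoke the universal property of the kernel $k$: any morphism $h: X \to A$ with $f \circ h = 0_{XB}$ factors uniquely as $h = k \circ w$ for some $w: X \to K$. Both $u$ and $v$ are such factorizations of the single morphism $h$, so the uniqueness clause forces $u = v$.

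There is no real obstacle here; the entire content is that the factorization in the universal property is unique, and the only subtle bookkeeping is confirming that $f \circ k \circ u$ really is the canonical zero morphism $0_{XB}$, which follows because any composite involving a zero morphism factors through a zero object and such morphisms are unique. This is exactly why the paper lists the result without proof, and I would keep the write-up to essentially the three lines above.
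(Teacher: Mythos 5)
Your proof is correct and is precisely the standard argument the paper alludes to when it skips the proof as ``fairly trivial and well-known'': unfold the universal property, observe that $k\circ u = k\circ v$ is a morphism killed by $f$, and invoke the uniqueness of the factorization through the kernel to conclude $u=v$.
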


\begin{proposition} \label{thm:kernel_compose}
	If we have morphisms $f:A \rightarrow B$ and $g: B \rightarrow C$, and $g$ is monic, then a kernel of $f$ is also a kernel of $g \circ f$ and vice versa.
\end{proposition}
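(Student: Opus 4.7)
The plan is to extract the essential content of the proposition into a single observation about how a monic can be cancelled past a zero morphism, and then read the statement off the universal property of a kernel. Specifically, for any morphism $h : X \to A$, I would first show that $f \circ h = 0_{XB}$ if and only if $(g \circ f) \circ h = 0_{XC}$. The forward direction is immediate, since $g \circ 0_{XB} = 0_{XC}$ (compositions of morphisms factoring through a zero object still factor through a zero object), so $(g \circ f) \circ h = g \circ (f \circ h) = g \circ 0_{XB} = 0_{XC}$. The reverse direction uses that $g$ is monic: from $g \circ (f \circ h) = 0_{XC} = g \circ 0_{XB}$, cancellation of $g$ yields $f \circ h = 0_{XB}$.

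Given this equivalence, I would conclude by comparing universal properties. A morphism $k : K \to A$ is a kernel of $f$ precisely when (i) $f \circ k = 0$, and (ii) every $h : X \to A$ with $f \circ h = 0$ factors uniquely through $k$. Applying the equivalence of the previous paragraph to both conditions, (i) becomes $(g \circ f) \circ k = 0$ and (ii) becomes the assertion that every $h$ with $(g \circ f) \circ h = 0$ factors uniquely through $k$; together, these are exactly the defining properties of a kernel of $g \circ f$. Hence the two notions of kernel coincide.

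There is no real obstacle here, and nothing in the argument is subtle; the only ingredients are the universal property of a kernel, the fact that $g \circ 0_{XB} = 0_{XC}$ (which is immediate from the uniqueness in Definition \ref{def:zero_func}), and the defining cancellation property of a monic. This is consistent with the author's remark that the proposition is trivial enough to warrant omitting the proof.
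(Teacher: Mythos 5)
Your proof is correct and is the standard argument via the universal property of the kernel, using monicity to show $f\circ h = 0$ if and only if $g\circ f\circ h = 0$. The paper itself omits the proof (the author explicitly labels this and the two adjacent propositions as ``fairly trivial and well-known''), so there is no written proof to compare against, but yours is exactly the argument one would expect and it fills the gap cleanly.
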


\begin{definition} \label{def:cat_kernels}
	We say that a category $\mathfrak{C}$ is a \textbf{category with kernels} if it has a zero object and every morphism has a kernel. For a morphism $f$ in $\mathfrak{C}$, we'll use the notation $\ker(f)$ to denote a kernel morphism of $f$, and $\Ker(f)$ to denote a kernel object, i.e. the domain of a kernel morphism. For the rest of this section, all objects and morphisms will exist inside some fixed category with kernels.
\end{definition}

\begin{proposition}  \label{thm:restriction_kernel}
	Whenever we have morphisms $f: A \rightarrow B$ and $g: A \rightarrow C$, $\Ker(g\circ\ker(f)) \simeq \Ker(f\circ\ker(g))$
\end{proposition}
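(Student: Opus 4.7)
My plan is to characterize both $\Ker(g \circ \ker(f))$ and $\Ker(f \circ \ker(g))$ as representing the same universal property: roughly, the ``biggest'' subobject of $A$ annihilated by both $f$ and $g$. Write $\kappa_1 = \ker(g \circ \ker(f))$ with domain $X_1$, and $\kappa_2 = \ker(f \circ \ker(g))$ with domain $X_2$; I want to produce an isomorphism $X_1 \simeq X_2$.

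First I would check that the composite $m_1 := \ker(f) \circ \kappa_1 : X_1 \to A$ satisfies $f \circ m_1 = 0$ (immediate from $f \circ \ker(f) = 0$, which is part of the defining property of $\ker(f)$) and $g \circ m_1 = 0$ (immediate from $\kappa_1$ being the kernel of $g \circ \ker(f)$). Next I would establish the following universal property of $m_1$: for any morphism $m : Y \to A$ with $f \circ m = 0$ and $g \circ m = 0$, there is a unique $\tilde{m} : Y \to X_1$ with $m_1 \circ \tilde{m} = m$. Existence: since $f \circ m = 0$, the universal property of $\ker(f)$ gives a unique $m' : Y \to \Ker(f)$ with $\ker(f) \circ m' = m$; since $g \circ \ker(f) \circ m' = g \circ m = 0$, the universal property of $\kappa_1$ then gives a unique $\tilde{m} : Y \to X_1$ with $\kappa_1 \circ \tilde{m} = m'$, and combining the two relations yields $m_1 \circ \tilde{m} = m$. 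Uniqueness of $\tilde{m}$ follows from the fact that $m_1$ is monic, being the composite of two kernel morphisms, each monic by Proposition \ref{thm:kernel_monic}.

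By the completely symmetric argument (swapping the roles of $f$ and $g$), the morphism $m_2 := \ker(g) \circ \kappa_2 : X_2 \to A$ satisfies exactly the same universal property. Since two objects representing the same universal property are canonically isomorphic, there is a unique isomorphism $\varphi : X_1 \to X_2$ with $m_2 \circ \varphi = m_1$, which yields the desired $\Ker(g \circ \ker(f)) \simeq \Ker(f \circ \ker(g))$.

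I do not anticipate a genuine obstacle here; the proof is essentially diagram-chasing through three successive universal properties. The only point requiring a little care is the order in which the universal properties are invoked (first $\ker(f)$, then $\kappa_1$) to build the factorization $\tilde{m}$, and the appeal to Proposition \ref{thm:kernel_monic} to get uniqueness from monicity of the composite $\ker(f) \circ \kappa_1$.
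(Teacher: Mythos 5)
Your proof is correct and is essentially the same argument as the paper's, just phrased in a slightly more conceptual register: you identify both $\Ker(g\circ\ker(f))$ and $\Ker(f\circ\ker(g))$ as representing the universal property of ``the largest subobject of $A$ annihilated by both $f$ and $g$'', while the paper unrolls this into an explicit diagram chase, constructing the two comparison maps $m$ and $n$ by the same two successive kernel factorizations you describe and then verifying $nm=\mathrm{id}$ and $mn=\mathrm{id}$ by monicity of $\ker(f)\circ\kappa_1$ (the same monicity fact, via Proposition~\ref{thm:kernel_monic}, that you invoke for uniqueness). The underlying construction and the appeal to monicity are identical; your packaging via representability is cleaner to state but produces exactly the paper's maps when unwound.
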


\begin{proof} We have this commutative diagram:

\begin{tikzpicture}
\matrix (m) [
matrix of math nodes,
row sep=5.5em,
column sep=6.5em,
text height=1.5ex, text depth=0.25ex
]
{ & & B & \\
	\Ker(f \circ \ker(g)) & \Ker(g) & A & C \\
	& \Ker(g \circ \ker(f)) & \Ker(f) & \\
};

\path[overlay,->, font=\scriptsize,>=latex]
(m-2-3) edge node[left] {$f$} (m-1-3)
(m-2-1) edge node[above] {$k_4 = \ker(f \circ \ker(g))$} (m-2-2)
(m-2-2) edge node[above] {$k_3 = \ker(g)$} (m-2-3)
(m-2-3) edge node[above] {$g$} (m-2-4)
(m-3-2) edge node[above] {$k_2 = \ker(g \circ \ker(f))$} (m-3-3)
(m-3-3) edge node[left] {$k_1 = \ker(f)$} (m-2-3);
\path[overlay,->, font=\scriptsize,>=latex,dashed]
(m-3-2) edge node[right] {$l$} (m-2-2)
(m-3-2) edge node[above] [bend right = 10] {$m$} (m-2-1)
(m-2-1) edge [bend right = 20] node[below] {$n$} (m-3-2);
\end{tikzpicture}

By definition, $g \circ k_1 \circ k_2 = 0$, and so $k_1 \circ k_2 = k_3 \circ l$ for some uniquely determined morphism $l$. Furthermore, $f \circ k_3 \circ l = f \circ k_1 \circ k_2 = 0$, and so $l = k_4 \circ m$ for some uniquely determined $m$. We then have $k_3 \circ k_4 \circ m = k_1 \circ k_2$. By a similar argument, we can find an $n:\Ker(f\circ \ker(g)) \rightarrow \Ker(g\circ \ker(f))$ with $k_1 \circ k_2 \circ n = k_3 \circ k_4$. Then $k_1 \circ k_2 \circ n \circ m = k_3 \circ k_4 \circ m = k_1 \circ k_2$. Since $k_1 \circ k_2$ is monic, $n \circ m= \text{id}_{\Ker(g\circ \ker(f))}$. By a similar argument, $m \circ n = \text{id}_{\Ker(f\circ \ker(g))}$. Therefore, $m$ and $n$ are isomorphisms.
\end{proof}

\begin{corollary} \label{thm:restriction_kernel2}
	Whenever we have morphisms $f: A \rightarrow B$ and $g: B \rightarrow C$, $\Ker(f) \simeq \Ker(f \circ \ker(g \circ f))$.
\end{corollary}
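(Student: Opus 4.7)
The plan is to derive the corollary as a direct application of Proposition \ref{thm:restriction_kernel} via the substitution $f \mapsto f$ and $g \mapsto g \circ f$. Proposition \ref{thm:restriction_kernel} requires the two morphisms to share a common domain; in the setup of the corollary we have $f: A \to B$ and $g: B \to C$, but the composite $g \circ f: A \to C$ has domain $A$, so $(f, g \circ f)$ is a valid pair of morphisms out of $A$ to which the proposition applies.

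With this substitution, Proposition \ref{thm:restriction_kernel} yields
\[
\Ker\bigl((g \circ f) \circ \ker(f)\bigr) \simeq \Ker\bigl(f \circ \ker(g \circ f)\bigr).
\]
The right-hand side is exactly the object we want in the conclusion. So the remaining step is to identify the left-hand side with $\Ker(f)$. By the universal property of the kernel, $f \circ \ker(f) = 0$, and hence $(g \circ f) \circ \ker(f) = g \circ (f \circ \ker(f)) = 0$ as a morphism $\Ker(f) \to C$. Applying Proposition \ref{thm:zero_kernel}, $\text{id}_{\Ker(f)}$ is a kernel of $0_{\Ker(f), C}$, so $\Ker\bigl((g \circ f) \circ \ker(f)\bigr) \simeq \Ker(f)$.

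Chaining the two isomorphisms gives $\Ker(f) \simeq \Ker(f \circ \ker(g \circ f))$, as desired. There is no real obstacle here: once the substitution is spotted, the argument is a routine unpacking using only Propositions \ref{thm:zero_kernel} and \ref{thm:restriction_kernel} together with the defining property $f \circ \ker(f) = 0$. The only mild subtlety is notational, namely that $\Ker(\cdot)$ denotes a kernel object, which is defined only up to isomorphism, so the displayed equations should be read as isomorphisms rather than equalities of objects.
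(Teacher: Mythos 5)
Your proof is correct and takes essentially the same route as the paper: apply Proposition \ref{thm:restriction_kernel} to the pair $(f, g\circ f)$, then identify $\Ker((g\circ f)\circ\ker(f))$ with $\Ker(f)$ via $f\circ\ker(f)=0$ and Proposition \ref{thm:zero_kernel}, and chain the isomorphisms. The paper presents the two steps in the opposite order, but the argument is the same.
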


\begin{proof}
	Since $g \circ f \circ \ker(f) = 0$, we get from \ref{thm:zero_kernel} that ${\Ker(f) \simeq \Ker(g \circ f \circ \ker(f))}$. From \ref{thm:restriction_kernel}, $\Ker(g \circ f \circ \ker(f)) \simeq \Ker(f\circ \ker(g \circ f))$. The result follows from transitivity.
\end{proof}

\section{Ranked Categories}
\begin{definition} \label{def:ranked_cat}
	A \textbf{ranked category} is a category $\mathfrak{C}$ equipped with a \textbf{rank}, i.e. a function $\rk$ from the morphisms of $\mathfrak{C}$ to the nonnegative reals, which satisfies the following inequalities for any pair of morphisms $f: A \rightarrow B$ and $g: B \rightarrow C$ in $\mathfrak{C}$:
	\begin{enumerate}
		\item $\rk(g \circ f) \leq \rk(f)$
		\item $\rk(g \circ f) \leq \rk(g)$
	\end{enumerate}
	As an example, the ordinary matrix rank on the category of finite-dimensional vector spaces over some field $K$ is clearly a rank. An example on the category of finite sets is the function that measures the cardinality of the image of a morphism. In this section, we'll cover some basic definitions and theorems pertaining to ranked categories.
\end{definition}

\begin{definition} \label{def:ell}
	If $\mathfrak{C}$ is a ranked category, we'll use $\ell(A)$ as a shorthand for $\rk(\text{id}_A)$ whenever $A$ is an object of $\mathfrak{C}$.
\end{definition}

\begin{definition} \label{def:factors}
	If $f:A \rightarrow C$ is a morphism in $\mathfrak{C}$, and $g$ is another morphism in $\mathfrak{C}$,  we'll say that $f$ \textbf{factors through} $g$ if $f$ can be written as $g_1 \circ g \circ g_0$ for some pair of morphisms $g_1$ and $g_0$. If $B$ is an object in $\mathfrak{C}$, we'll say that $f$ \textbf{factors through} $B$ if it can be written as $h_1 \circ h_0$, with $h_0: A \rightarrow B$ and $h_1: B \rightarrow C$.
\end{definition}

The following theorems follow almost immediately from the axioms:

\begin{proposition} \label{thm:factors}
	If $f$ factors through a morphism $g$, then $\rk(f) \leq \rk(g)$.
\end{proposition}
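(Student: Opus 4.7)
The plan is to unfold the definition of ``factors through'' and apply the two rank inequalities twice. By hypothesis, we may write $f = g_1 \circ g \circ g_0$ for some morphisms $g_0$ and $g_1$.

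First I would bracket this composition as $f = g_1 \circ (g \circ g_0)$ and apply axiom (1) of \ref{def:ranked_cat} (with the role of ``$g$'' in the axiom played by $g_1$ and the role of ``$f$'' played by $g \circ g_0$). This gives $\rk(f) \leq \rk(g \circ g_0)$. Then I would apply axiom (2) of \ref{def:ranked_cat} to the composite $g \circ g_0$ (with $g$ in the axiom being our $g$ and $f$ in the axiom being $g_0$), yielding $\rk(g \circ g_0) \leq \rk(g)$. Chaining the two inequalities gives $\rk(f) \leq \rk(g)$, as required.

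There is no real obstacle here; the statement is essentially a restatement of the two axioms for rank, together with the observation that the ``outer'' factors $g_0$ and $g_1$ can be absorbed one at a time, each application throwing away one of the two outer morphisms. The only thing to be mildly careful about is associativity of composition, which is automatic in any category. The analogous statement for factoring through an object $B$ (that is, $f = h_1 \circ h_0$) follows as an immediate corollary by the same argument with $g = \mathrm{id}_B$, since $\rk(h_1 \circ \mathrm{id}_B \circ h_0) \leq \rk(\mathrm{id}_B) = \ell(B)$, although this is not what the proposition asks for.
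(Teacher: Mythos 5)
Your proof is correct and is exactly the argument the paper intends; the paper simply states that this proposition follows almost immediately from the axioms and omits the details, which you have filled in by unfolding the definition of ``factors through'' and applying the two rank axioms in turn.
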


\begin{proposition} \label{thm:factors2}
	If $f: A \rightarrow C$ factors through an object $B$, then $\rk(f) \leq \ell(B)$. As special cases, $\rk(f) \leq \ell(A)$ and $\rk(f) \leq \ell(C)$.
\end{proposition}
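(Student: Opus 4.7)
My plan is to reduce this to the previous proposition (\ref{thm:factors}) by inserting an identity morphism into the factorization. The hypothesis gives $f = h_1 \circ h_0$ with $h_0 : A \to B$ and $h_1 : B \to C$. I would simply rewrite this as $f = h_1 \circ \text{id}_B \circ h_0$, which exhibits $f$ as factoring through the morphism $\text{id}_B$ in the sense of \ref{def:factors}. Then \ref{thm:factors} immediately gives $\rk(f) \leq \rk(\text{id}_B)$, which is $\ell(B)$ by the shorthand from \ref{def:ell}.

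For the two special cases, I would observe that any morphism $f : A \to C$ trivially factors through both its domain and codomain: we have $f = f \circ \text{id}_A$, showing $f$ factors through $A$, and $f = \text{id}_C \circ f$, showing $f$ factors through $C$. Applying the main statement in each case yields $\rk(f) \leq \ell(A)$ and $\rk(f) \leq \ell(C)$.

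There is essentially no obstacle here; the proof is a one-line invocation of \ref{thm:factors}. The only subtlety worth noting is the small shift of perspective between factoring through a morphism (the hypothesis of \ref{thm:factors}) and factoring through an object (the hypothesis here), and the bridge between these two notions is precisely the identity morphism of the intermediate object. I would keep the write-up to a couple of sentences.
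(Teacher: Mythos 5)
Your proof is correct and matches the approach the paper has in mind; the paper gives no explicit proof, noting only that \ref{thm:factors2} follows "almost immediately from the axioms," and the route you describe — inserting $\text{id}_B$ to reduce to \ref{thm:factors}, then specializing via $f = f \circ \text{id}_A = \text{id}_C \circ f$ — is exactly the intended one-liner.
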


\begin{proposition} \label{thm:factors3}
	If $f$ factors through $g$ and $g$ factors through $f$, $\rk(f) = \rk(g)$.
\end{proposition}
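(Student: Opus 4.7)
The plan is very short: Proposition \ref{thm:factors3} follows by applying Proposition \ref{thm:factors} twice and combining the two inequalities via antisymmetry of $\leq$ on the nonneg reals.

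More precisely, first I would invoke the hypothesis that $f$ factors through $g$; by Proposition \ref{thm:factors}, this immediately yields $\rk(f) \leq \rk(g)$. Next, I would invoke the symmetric hypothesis that $g$ factors through $f$; the same proposition then gives $\rk(g) \leq \rk(f)$. Combining these two inequalities gives $\rk(f) = \rk(g)$, which is the desired conclusion.

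There is no real obstacle here — the statement is essentially a formal consequence of \ref{thm:factors}, and the proof will fit in one or two sentences. The only thing to be mildly careful about is that both applications of \ref{thm:factors} are legitimate: we are in a ranked category where rank is a nonneg real-valued function on morphisms, so the inequalities $\rk(f) \leq \rk(g)$ and $\rk(g) \leq \rk(f)$ live in $\mathbb{R}_{\geq 0}$ and can be combined with antisymmetry without issue. No use of the underlying axioms of a category with kernels, or of factorization through objects (as opposed to morphisms), is needed.
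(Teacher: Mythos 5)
Your proof is correct and is the obvious intended argument: the paper gives no explicit proof (it says these propositions ``follow almost immediately from the axioms''), and applying \ref{thm:factors} in each direction and using antisymmetry of $\leq$ is exactly what is meant.
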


The following are direct consequences of \ref{thm:factors3}:

\begin{proposition} \label{thm:isomorphic_ell}
	If $A$ and $B$ are isomorphic objects of $\mathfrak{C}$, then $\ell(A) = \ell(B)$.
\end{proposition}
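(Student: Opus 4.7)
The plan is to unpack the definition of $\ell$ and reduce immediately to Proposition \ref{thm:factors3}. Let $\phi : A \to B$ be an isomorphism with inverse $\phi^{-1} : B \to A$. Then $\ell(A) = \rk(\mathrm{id}_A)$ and $\ell(B) = \rk(\mathrm{id}_B)$, so it suffices to show that $\mathrm{id}_A$ and $\mathrm{id}_B$ each factor through the other in the sense of Definition \ref{def:factors}.

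The first step is to observe that $\mathrm{id}_A = \phi^{-1} \circ \mathrm{id}_B \circ \phi$, which exhibits $\mathrm{id}_A$ as a factorization through $\mathrm{id}_B$ (taking $g_0 = \phi$ and $g_1 = \phi^{-1}$ in the terminology of Definition \ref{def:factors}). Symmetrically, $\mathrm{id}_B = \phi \circ \mathrm{id}_A \circ \phi^{-1}$, so $\mathrm{id}_B$ factors through $\mathrm{id}_A$.

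Applying Proposition \ref{thm:factors3} to the pair $(\mathrm{id}_A, \mathrm{id}_B)$ then gives $\rk(\mathrm{id}_A) = \rk(\mathrm{id}_B)$, i.e. $\ell(A) = \ell(B)$.

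There is no real obstacle here: the statement is essentially a one-line consequence of Proposition \ref{thm:factors3}, and the only content is recognizing that an isomorphism provides the required mutual factorizations of the identity morphisms. The proof requires no reference to kernels, exactness, or the rank inequalities beyond what is already encoded in \ref{thm:factors3}.
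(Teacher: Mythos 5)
Your proof is correct and matches the paper's intent exactly: the paper states \ref{thm:isomorphic_ell} as a direct consequence of \ref{thm:factors3}, and the mutual factorizations $\mathrm{id}_A = \phi^{-1}\circ\mathrm{id}_B\circ\phi$ and $\mathrm{id}_B = \phi\circ\mathrm{id}_A\circ\phi^{-1}$ are precisely the detail being left implicit.
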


\begin{proposition}  \label{thm:isomorphic_ell2}
	If $A$ and $B$ are isomorphic objects of $\mathfrak{C}$, with isomorphisms that commute with a pair of morphisms $f:A \rightarrow C$ and $g:B \rightarrow C$, then $\rk(f) = \rk(g)$.
\end{proposition}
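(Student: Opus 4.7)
The plan is to deduce this directly from Proposition \ref{thm:factors3}, which says that if two morphisms factor through each other then they have equal rank. So all I need to do is unpack what ``isomorphisms that commute with $f$ and $g$'' means, and observe that it gives factorizations in both directions.

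More precisely, let $\phi: A \to B$ be the given isomorphism, with inverse $\phi^{-1}: B \to A$. The commutativity hypothesis means $g \circ \phi = f$ and equivalently $f \circ \phi^{-1} = g$. The first equation exhibits $f$ as a factorization through $g$ (taking $g_1 = \mathrm{id}_C$, the middle morphism $g$, and $g_0 = \phi$ in the notation of Definition \ref{def:factors}). The second equation symmetrically exhibits $g$ as a factorization through $f$. Now \ref{thm:factors3} applies and yields $\rk(f) = \rk(g)$.

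There is no real obstacle here; this is the category-theoretic sibling of \ref{thm:isomorphic_ell}, where one takes identity morphisms rather than $f$ and $g$, and the argument is identical in structure. The only thing to be slightly careful about is to state clearly what ``commute with $f$ and $g$'' means, since the excerpt does not spell out which commutativity square is intended, but the only sensible reading (given that both $f$ and $g$ have codomain $C$) is $f = g \circ \phi$.
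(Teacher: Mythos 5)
Your proof is correct and matches the paper's approach exactly: the paper states this proposition (along with \ref{thm:isomorphic_ell} and \ref{thm:isomorphic_ell3}) as a direct consequence of \ref{thm:factors3}, which is precisely what you do by exhibiting the mutual factorizations $f = g \circ \phi$ and $g = f \circ \phi^{-1}$.
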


\begin{corollary} \label{thm:restriction_rank_defined}
	If $f:A \rightarrow B$ and $g:A \rightarrow C$ are any morphisms in $\mathfrak{C}$, the quantity $\rk(f \circ \ker(g))$ is well-defined, i.e. it is independent of our choice of kernel for $g$.
\end{corollary}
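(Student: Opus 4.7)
The plan is to show that any two choices of kernel for $g$ yield morphisms $f \circ \ker(g)$ of equal rank, by exhibiting them as mutually factoring through each other and invoking Proposition \ref{thm:factors3}.

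Concretely, I would let $k_1 : K_1 \to A$ and $k_2 : K_2 \to A$ be two kernel morphisms of $g$. Since $g \circ k_1 = 0$ and $g \circ k_2 = 0$, the universal property of a kernel produces unique morphisms $\phi : K_1 \to K_2$ and $\psi : K_2 \to K_1$ satisfying $k_1 = k_2 \circ \phi$ and $k_2 = k_1 \circ \psi$. Composing with $f$ on the left, we obtain $f \circ k_1 = (f \circ k_2) \circ \phi$ and $f \circ k_2 = (f \circ k_1) \circ \psi$, so each of $f \circ k_1$ and $f \circ k_2$ factors through the other (in the sense of Definition \ref{def:factors}, with the trivial outer factors being identities). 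Applying Proposition \ref{thm:factors3} then yields $\rk(f \circ k_1) = \rk(f \circ k_2)$, which is exactly the well-definedness claim.

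There is really no substantive obstacle here: the result is a formal consequence of the universal property of kernels together with the two rank axioms packaged in Proposition \ref{thm:factors3}. The only point to watch is that Proposition \ref{thm:factors3} is applied in its literal form, so one should present the factorizations explicitly as $f \circ k_1 = \mathrm{id}_B \circ (f \circ k_2) \circ \phi$ and symmetrically, rather than appealing to \ref{thm:isomorphic_ell2} (which would also work, but requires first verifying that $\phi$ and $\psi$ are mutually inverse isomorphisms — a short but nontrivial diagram chase using monicness of kernels from \ref{thm:kernel_monic}). Using \ref{thm:factors3} bypasses that check entirely and keeps the proof to a couple of lines.
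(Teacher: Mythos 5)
Your proof is correct. The paper does not give an explicit argument for this corollary, but its placement immediately after Proposition \ref{thm:isomorphic_ell2} (and its labelling as a corollary) indicates that the intended route is: two kernels of $g$ are isomorphic via isomorphisms commuting with the kernel morphisms (hence with $f$ post-composed), so Proposition \ref{thm:isomorphic_ell2} applies directly. You instead invoke Proposition \ref{thm:factors3} without passing through the isomorphism step, observing that $f\circ k_1$ and $f\circ k_2$ mutually factor through one another via the comparison maps $\phi$ and $\psi$ produced by the universal property. This is a genuine, if small, shortcut: Proposition \ref{thm:isomorphic_ell2} requires first establishing that $\phi$ and $\psi$ are mutually inverse (a short monicity argument via \ref{thm:kernel_monic}), whereas your argument only needs the existence of $\phi$ and $\psi$, not their invertibility. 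Both are one-step consequences of \ref{thm:factors3} in the end, since \ref{thm:isomorphic_ell2} is itself derived from \ref{thm:factors3}; yours is marginally more economical, and you correctly flagged exactly where the two routes diverge.
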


\begin{proposition}  \label{thm:isomorphic_ell3}
	If $A$ and $B$ are isomorphic objects of $\mathfrak{C}$, with isomorphisms that commute with a pair of morphisms $f:C \rightarrow A$ and $g:C \rightarrow B$, then $\rk(f) = \rk(g).$
\end{proposition}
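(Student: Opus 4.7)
The plan is to mirror the argument for \ref{thm:isomorphic_ell2} almost verbatim, merely flipping the direction of the arrows, and then invoke \ref{thm:factors3}. Concretely, the hypothesis should be read as: there is an isomorphism $\phi: A \rightarrow B$ (with inverse $\phi^{-1}: B \rightarrow A$) such that $\phi \circ f = g$, equivalently $\phi^{-1} \circ g = f$.

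From $g = \phi \circ f$, we see that $g$ factors through $f$ (take $g_1 = \phi$, $g = g_1 \circ f \circ \text{id}_C$). From $f = \phi^{-1} \circ g$, we see symmetrically that $f$ factors through $g$. Applying \ref{thm:factors3} to this pair of mutual factorizations immediately yields $\rk(f) = \rk(g)$.

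There is no real obstacle here: the only thing to double-check is that the notion of "isomorphisms that commute with a pair of morphisms" really does produce the factorizations above, which is just the commutativity of the evident triangle. Since \ref{thm:factors3} is already in hand, the proof should be one or two lines, exactly parallel to the preceding proposition.
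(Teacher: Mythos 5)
Your argument is correct and is exactly what the paper has in mind: the paper presents this proposition (along with its dual \ref{thm:isomorphic_ell2}) as a "direct consequence of \ref{thm:factors3}," and your two-line factorization $g = \phi \circ f \circ \mathrm{id}_C$, $f = \phi^{-1} \circ g \circ \mathrm{id}_C$ is precisely the intended verification.
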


\begin{definition} \label{def:coh_exists2}
	Suppose $\mathfrak{C}$ is a ranked category with a zero object. Let $O$ be the objects of $\mathfrak{C}$ and $E$ be the class of triples $(A, B, C)$ for each short exact sequence $0 \rightarrow A \rightarrow B \rightarrow C \rightarrow 0$ in $\mathfrak{C}$. We'll say \textbf{cohomology exists on $\mathfrak{C}$} if cohomology exists for $(O, E, \ell)$, in the sense of \ref{def:coh_exists}. We then have a corresponding uniquely-determined sequence of functions $h = (h^0, h^1, \ldots)$, with $h^0 = \ell$.
\end{definition}

\begin{definition} \label{def:nullity}
	If $\mathfrak{C}$ is a ranked category with kernels, we'll use $\nul(f)$ as shorthand for $\ell(\Ker(f))$. We have from \ref{thm:isomorphic_ell} that this quantity is independent of our choice of kernel.
\end{definition}

\begin{proposition} \label{thm:zero_rank}
	If $g \circ f$ is a zero morphism, then $\rk(f) \leq \nul(g)$.
\end{proposition}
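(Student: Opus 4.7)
The plan is to exploit the universal property of the kernel to factor $f$ through the object $\Ker(g)$, and then apply the factoring inequality \ref{thm:factors2}.

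First I would unpack the hypothesis: writing $f: A \to B$ and $g: B \to C$, the assumption $g \circ f = 0$ means precisely that $f$ is sent to zero by post-composition with $g$. By the universal property of $\ker(g): \Ker(g) \to B$, this forces the existence of a (unique) morphism $h: A \to \Ker(g)$ such that $f = \ker(g) \circ h$. In the language of \ref{def:factors}, this exhibits $f$ as factoring through the object $\Ker(g)$, via $h_0 = h$ and $h_1 = \ker(g)$.

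Then I would invoke Proposition \ref{thm:factors2}, which says that if a morphism factors through an object $B'$, its rank is bounded by $\ell(B')$. Applied here with $B' = \Ker(g)$, this gives $\rk(f) \leq \ell(\Ker(g))$, which is $\nul(g)$ by Definition \ref{def:nullity}. The definition of $\nul$ is independent of the choice of kernel by \ref{thm:isomorphic_ell}, so no ambiguity arises.

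There is no real obstacle here; the only subtlety is recognizing that we are in a category with kernels (which is implicit since $\nul(g)$ is mentioned at all, per \ref{def:nullity}), so that $\ker(g)$ exists and the universal property is available. Everything else is a direct application of the two preceding results.
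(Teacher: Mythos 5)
Your proposal is correct and is exactly the paper's argument: factor $f$ through $\Ker(g)$ via the universal property of the kernel (since $g \circ f = 0$), then apply \ref{thm:factors2}. You have merely spelled out the details that the paper leaves implicit.
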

\begin{proof}
	We must have that $f$ factors through $\Ker(g)$, so applying \ref{thm:factors2} gives the result.
\end{proof}

\begin{definition} \label{def:coh_bullet}
	Suppose $\mathfrak{C}$ is a ranked category with kernels, and we have a chain complex $A$ in $\mathfrak{C}$ that's given by the following diagram:
	$$0 \xrightarrow{a_0} A_1 \xrightarrow{a_1} A_2 \xrightarrow{a_2}  A_3 \xrightarrow{a_3}\ldots$$	
	For $k \geq 0$, define $h^k_\bullet(A)$ to be $\nul(a_{k+1}) - \rk(a_k)$ (the dot indicates that we are looking at the cohomology of a resolution rather than an object; we'll use $h^k(L)$ later on for the cohomology of a lattice $L$). We have from \ref{thm:zero_rank} that these numbers are nonnegative.
\end{definition}

\begin{definition} \label{def:left_exact}
	We'll say that a rank on a category with kernels $\mathfrak{C}$ is \textbf{left-exact} if, for any morphism $f:A \rightarrow B$, we have $\rk(f) = \ell(A) - \nul(f)$. We'll say that a ranked category is left-exact if the corresponding rank is. As motivation for this definition, consider the following situation: suppose we have an Abelian category $\mathfrak{A}$ and a functor $F$ from  $\mathfrak{A}$ to the category $\normalfont\textbf{FVect}_K$ of finite dimensional vector spaces over some field $K$. Then for morphisms $f$ in $\mathfrak{A}$, defining $\rk(f)$ to be the rank of $F(f)$ makes $\mathfrak{A}$ a ranked category. If $F$ is a left-exact functor, then $\mathfrak{A}$ is clearly a left-exact ranked category.
\end{definition}

\begin{definition} \label{def:additive}
	Suppose $\mathfrak{C}$ is a left-exact ranked category with kernels. Suppose we have a sequence $A \xrightarrow{f} B \xrightarrow{g} C$ in $\mathfrak{C}$, with $f$ a kernel of $g$. Then we have from left-exactness and \ref{thm:factors2} that $\ell(C) - \ell(B) + \ell(A) = \ell(C) - \rk(g) \geq 0$. If we also have that $\ell(C) - \ell(B) + \ell(A) \leq \epsilon$, then we say that the sequence $A \xrightarrow{f} B \xrightarrow{g} C$ is \textbf{$\epsilon$-additive}. To motivate the terminology, consider that $\epsilon$-additivity implies $\ell(B) \approx \ell(A) + \ell(C)$ for small $\epsilon$.
\end{definition}

\begin{definition} \label{def:additive_object}
	Suppose $\mathfrak{C}$ is a left-exact ranked category with kernels. We say an object $A$ of $\mathfrak{C}$ is an \textbf{$\epsilon$-additive object} if, whenever we have a short exact sequence $0 \rightarrow A \xrightarrow{f} B \xrightarrow{g} C \rightarrow 0$, the sequence $A \xrightarrow{f} B \xrightarrow{g} C$ is $\epsilon$-additive.
\end{definition}

\section{Example: Coherent sheaves on a projective scheme over a field}
Suppose $K$ is a field, $X$ is a scheme that's projective over $K$, and $\Coh(X)$ is the category of coherent sheaves on $X$. The global sections of any sheaf in $\Coh(X)$ is a finite-dimensional $K$-vector space, and so taking global sections gives us a functor $\Gamma$ from $\Coh(X)$ to $\normalfont\textbf{FVect}_K$, where $\normalfont\textbf{FVect}_K$ is the category of finite-dimensional $K$-vector spaces. The usual matrix rank gives $\normalfont\textbf{FVect}_K$ a canonical ranked category structure, and the composite $\rk \circ \Gamma$ does the same for $\Coh(X)$. From the rank-nullity theorem, $\normalfont\textbf{FVect}_K$ is a left-exact ranked category. Since $\Gamma$ is a left-exact functor, it preserves kernels, and so $\Coh(X)$ inherits left-exactness from $\normalfont\textbf{FVect}_K$.

We'll now show that cohomology exists on $\Coh(X)$, in the sense of \ref{def:coh_exists2}. If $\mathcal{F}$ is a coherent sheaf on $X$, and $k \in \{0, 1, 2 \ldots\}$, let $h^k(\mathcal{F}) = \dim(H^k(X, \mathcal{F}))$. It suffices to show that $h := (h^0, h^1, h^2, \ldots)$ is an effaceable numerical $\delta$-functor.

\begin{lemma} \label{thm:numerical_delta}
	The sequence of functions $h$ on $\Coh(X)$ is a numerical $\delta$-functor.
\end{lemma}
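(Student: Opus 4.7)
The plan is to reduce the statement directly to the classical long exact sequence in sheaf cohomology. Given an arbitrary short exact sequence $0 \to \mathcal{A} \to \mathcal{B} \to \mathcal{C} \to 0$ in $\Coh(X)$, the standard machinery of derived functors (for instance via a Cartan--Eilenberg resolution, or via the snake lemma applied inductively) yields a long exact sequence of $K$-vector spaces
$$0 \to H^0(X,\mathcal{A}) \to H^0(X,\mathcal{B}) \to H^0(X,\mathcal{C}) \to H^1(X,\mathcal{A}) \to H^1(X,\mathcal{B}) \to H^1(X,\mathcal{C}) \to H^2(X,\mathcal{A}) \to \cdots$$
Because $X$ is projective over $K$ and each term is the cohomology of a coherent sheaf, all the vector spaces appearing are finite-dimensional, so their dimensions are well-defined nonnegative real numbers.

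Next, I would invoke the observation made at the end of Definition \ref{def:numex}: the dimensions of the vector spaces appearing in any exact sequence automatically form a numerical exact sequence, because the partial alternating sum $\sum_{j=0}^{k}(-1)^j \dim V_{k+1-j}$ equals $\dim \Ker(V_{k+1} \to V_{k+2})$, which is nonnegative. Applying this to the long exact sequence above, the sequence
$$h^0(\mathcal{A}), h^0(\mathcal{B}), h^0(\mathcal{C}), h^1(\mathcal{A}), h^1(\mathcal{B}), h^1(\mathcal{C}), h^2(\mathcal{A}), \ldots$$
is a numerical exact sequence. This is exactly the defining property in Definition \ref{def:numdelta}, so $h$ is a numerical $\delta$-functor on $\Coh(X)$.

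There is essentially no obstacle here beyond citing the correct classical results. The only point requiring any care is to note that finite-dimensionality of each $H^k(X, \mathcal{F})$ (Serre's theorem for projective schemes and coherent sheaves) is what ensures the $h^k$ land in the nonnegative reals rather than extended reals, so that the notion of numerical exactness literally applies. The proof itself is a two-line citation and appeal to the embedded observation in Definition \ref{def:numex}.
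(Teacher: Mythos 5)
Your proof is correct and follows essentially the same route as the paper's: cite the classical long exact sequence in sheaf cohomology and then invoke the observation from Definition~\ref{def:numex} that dimensions of an exact sequence of vector spaces form a numerical exact sequence. The only difference is that you spell out the finite-dimensionality point (Serre) explicitly, which the paper leaves implicit.
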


\begin{proof}
	Whenever we have a short exact sequence in $\Coh(X)$, say ${0 \rightarrow \mathcal{F} \rightarrow \mathcal{G} \rightarrow \mathcal{H} \rightarrow 0}$, we have a long exact sequence
	$$0 \rightarrow H^0(X, \mathcal{F}) \rightarrow H^0(X, \mathcal{G}) \rightarrow H^0(X, \mathcal{H}) \rightarrow H^1(X, \mathcal{F}) \rightarrow H^1(X, \mathcal{G}) \ldots$$
	Recall that, when we have an exact sequence of vector spaces, taking the dimension of everything in the sequence yields a numerical exact sequence. In particular, taking the dimension of everything in the exact sequence above yields a numerical exact sequence
	$$h^0(\mathcal{F}), h^0(\mathcal{G}), h^0(\mathcal{H}), h^1(\mathcal{F}), h^1(\mathcal{G}), \ldots$$
	as required.
\end{proof}

\begin{lemma} \label{thm:effaceable}
	The sequence of functions $h$ is effaceable.
\end{lemma}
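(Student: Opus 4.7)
The plan is to reduce effaceability to Serre's vanishing theorem, by embedding any coherent sheaf $\mathcal{F}$ into a sufficiently positive twist $\mathcal{F}(n)$. Since each $h^i$ takes values in nonnegative integers, requiring $h^i(\mathcal{G}) \leq \epsilon$ for every $\epsilon > 0$ is equivalent to $h^i(\mathcal{G}) = 0$. It therefore suffices to produce, for every coherent $\mathcal{F}$ on $X$ and every $i \geq 1$, a short exact sequence $0 \to \mathcal{F} \to \mathcal{G} \to \mathcal{H} \to 0$ in $\Coh(X)$ with $h^i(\mathcal{G}) = 0$; in fact the construction I have in mind will achieve $h^i(\mathcal{G}) = 0$ for all $i \geq 1$ simultaneously.

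Given $\mathcal{F}$, I would pick an integer $n$ large enough that two conditions hold: (i) $H^i(X, \mathcal{F}(n)) = 0$ for every $i \geq 1$, which is Serre's vanishing theorem; and (ii) there is a global section $s \in H^0(X, \mathcal{O}_X(n))$ that does not vanish at any associated point of $\mathcal{F}$. Condition (ii) ensures that $s$ is a non-zero-divisor on $\mathcal{F}$, so that multiplication by $s$ is a monomorphism $\mathcal{F} \hookrightarrow \mathcal{F}(n)$ of coherent sheaves with coherent cokernel $\mathcal{H}$. Setting $\mathcal{G} := \mathcal{F}(n)$ then gives the required short exact sequence, and condition (i) yields $h^i(\mathcal{G}) = 0$ for all $i \geq 1$.

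The main obstacle is arranging both (i) and (ii) at a common $n$. Condition (i) holds as soon as $n$ exceeds some threshold $N_0$. For (ii), the associated points of $\mathcal{F}$ form a finite set, and because $\mathcal{O}_X(1)$ is very ample, for each associated point $p$ there is a global section $s_p$ of some twist $\mathcal{O}_X(n_p)$ that does not vanish at $p$. The product $\prod_p s_p \in H^0(X, \mathcal{O}_X(\sum_p n_p))$ then does not vanish at any associated point of $\mathcal{F}$, and its positive powers provide such sections in arbitrarily high-degree twists. Choosing $n$ to be a sufficiently large multiple of $\sum_p n_p$ satisfies (i) as well, completing the construction.
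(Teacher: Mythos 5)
Your plan follows a genuinely different route from the paper's: rather than dualizing a presentation $\mathcal{E} \twoheadrightarrow \mathcal{O}(n)$ and tensoring with $\mathcal{F}(n)$, you aim to embed $\mathcal{F}$ into $\mathcal{F}(n)$ directly by multiplying by a global section $s$ of $\mathcal{O}(n)$ that is a non-zero-divisor on $\mathcal{F}$. The reduction from effaceability to exact vanishing, and the appeal to Serre vanishing for condition~(i), are both fine.

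There is, however, a concrete error in your construction of $s$. You pick, for each associated point $p$ of $\mathcal{F}$, a section $s_p$ with $s_p(p) \neq 0$, and then assert that $\prod_p s_p$ is nonzero at every associated point. This does not follow: $s_{p_1}$ was only arranged to be nonzero at $p_1$, and it may perfectly well vanish at $p_2$; if it does, the product vanishes at $p_2$ regardless of $s_{p_2}$. So the product need not avoid all the associated points simultaneously, and the intended monomorphism $\mathcal{F} \hookrightarrow \mathcal{F}(n)$ is not produced.

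The gap is repairable, but it needs an actual avoidance argument rather than a product of sections. For each associated point $p_i$ of $\mathcal{F}$ choose a closed point $q_i \in \overline{\{p_i\}}$; since a vanishing locus $V(s)$ is closed, $s(q_i) \neq 0$ forces $s(p_i) \neq 0$. Let $Z = \{q_1,\dots,q_k\}$ with its reduced structure and ideal sheaf $\mathcal{I}_Z$. Twisting $0 \to \mathcal{I}_Z \to \mathcal{O}_X \to \mathcal{O}_Z \to 0$ by $\mathcal{O}(n)$ and applying Serre vanishing to $\mathcal{I}_Z$ shows that $H^0(X, \mathcal{O}_X(n)) \to H^0(Z, \mathcal{O}_Z(n))$ is surjective for $n \gg 0$; as $Z$ is a finite disjoint union of closed points, one can then choose $s$ restricting to a unit at each $q_i$. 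This also works over a finite base field, where a naive ``avoid finitely many hyperplanes'' count would fail. With this fix your approach succeeds, though it is heavier than the paper's, which sidesteps associated points entirely by applying $\mathcal{H}om(-,\mathcal{O}) \otimes \mathcal{F}(n)$ to a locally split presentation of $\mathcal{O}(n)$.
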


\begin{proof}
	It suffices to show that an arbitrary coherent sheaf $\mathcal{F}$ on $X$ can be embedded in an acyclic coherent sheaf.

	Since $X$ is projective over $K$, it admits a very ample line bundle $\mathcal{O}(1)$. Choose an integer $n$ such that $\mathcal{F}(n)$ is acyclic and $\mathcal{O}(n)$ is generated by global sections. Since $\mathcal{O}(n)$ is generated by global sections, there is a short exact sequence of the form $0 \rightarrow \mathcal{K} \rightarrow \mathcal{E} \rightarrow \mathcal{O}(n) \rightarrow 0$, where $\mathcal{E} $ is given by $\bigoplus_{i = 1}^N \mathcal{O}$ for some $N$. Since $\mathcal{O}(n)$ is locally projective, the short exact sequence $0 \rightarrow \mathcal{K} \rightarrow \mathcal{E} \rightarrow \mathcal{O}(n) \rightarrow 0$ must be locally split. Since split short exact sequences are preserved by additive functors, we can apply $\mathcal{H}om(-, \mathcal{O})$ and then $- \otimes \mathcal{F}(n)$ to obtain a locally split short exact sequence $0 \rightarrow \mathcal{F} \xrightarrow{f} \mathcal{E} \otimes \mathcal{F}(n) \rightarrow \mathcal{K}^\vee \otimes \mathcal{F}(n)\rightarrow 0$. Since $\mathcal{F}(n)$ is acyclic, so is $\mathcal{E} \otimes \mathcal{F}(n) = \bigoplus_{i = 1}^N \mathcal{F}(n)$. Therefore, $f$ is an embedding from $\mathcal{F}$ to an acyclic sheaf, as required.
\end{proof}

Putting together \ref{thm:numerical_delta} and \ref{thm:effaceable}, we have:

\begin{theorem} \label{thm:coh_exists}
	Cohomology exists on $\Coh(X)$.
\end{theorem}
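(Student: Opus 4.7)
The theorem is essentially a packaging statement: by Definition \ref{def:coh_exists2} it unwinds to the assertion that, with $\ell(\mathcal{F}) = \dim_K \Gamma(X, \mathcal{F})$ and $E$ the class of short exact sequences in $\Coh(X)$, there exists an effaceable numerical $\delta$-functor $T$ on $\Coh(X)$ with $T^0 = \ell$. My plan is to exhibit $T = h = (h^0, h^1, h^2, \ldots)$ where $h^k(\mathcal{F}) := \dim_K H^k(X, \mathcal{F})$ and verify the three required properties.

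First, $T^0 = \ell$: this is immediate from the identification $H^0(X, \mathcal{F}) = \Gamma(X, \mathcal{F})$, so the rank of $\mathrm{id}_{\mathcal{F}}$ under $\rk \circ \Gamma$ (which defines $\ell$ in this ranked category) is precisely $\dim_K H^0(X, \mathcal{F})$. Second, that $h$ is a numerical $\delta$-functor is exactly Lemma \ref{thm:numerical_delta}, which was established by taking dimensions along the long exact cohomology sequence attached to a short exact sequence of coherent sheaves, together with the observation of Definition \ref{def:numex} that dimensions along an exact sequence of vector spaces form a numerical exact sequence. Third, $h$ is effaceable by Lemma \ref{thm:effaceable}, which produced for each $\mathcal{F}$ an embedding into an acyclic coherent sheaf $\mathcal{E} \otimes \mathcal{F}(n)$; completing this embedding to a short exact sequence $0 \to \mathcal{F} \to \mathcal{E} \otimes \mathcal{F}(n) \to \mathcal{Q} \to 0$ yields, for every $i \geq 1$, a triple $(\mathcal{F}, \mathcal{E} \otimes \mathcal{F}(n), \mathcal{Q}) \in E$ with $h^i(\mathcal{E} \otimes \mathcal{F}(n)) = 0 \leq \epsilon$ for any prescribed $\epsilon > 0$ (indeed with $\epsilon = 0$), which is exactly what Definitions \ref{def:eff} and \ref{def:eff_functor} require.

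Having verified all three conditions, the existence clause of Definition \ref{def:coh_exists} is satisfied, and the uniqueness noted there (which follows from Theorem \ref{thm:eff_equal}) shows that $h$ is the unique such sequence. So the proof is just a two-line assembly: cite Lemma \ref{thm:numerical_delta} and Lemma \ref{thm:effaceable}, observe $h^0 = \ell$, and invoke Definition \ref{def:coh_exists2}. There is no real obstacle at this stage, since the genuine content lives in the two lemmas — and among those, the delicate step is effaceability, which depends on the projectivity of $X$ (the existence of $\mathcal{O}(1)$ and Serre's vanishing) rather than on any new technology introduced in this paper.
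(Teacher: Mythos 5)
Your proposal is correct and takes essentially the same route as the paper: the paper also proves \ref{thm:coh_exists} by simply combining Lemma \ref{thm:numerical_delta} (numerical $\delta$-functor) and Lemma \ref{thm:effaceable} (effaceable) with the identification $h^0 = \ell$, exactly as you describe.
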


\begin{remark}
	Our proof of \ref{thm:coh_exists} made use of the usual sheaf cohomology in an essential way. This is unsatisfactory in a way, since the whole point of numerical cohomology is to provide an approach to cohomology that doesn't require as much structure as more traditional approaches. It is known to the author, however, how \ref{thm:coh_exists} can be proven in a purely numerical way which parallels our approach to the cohomology of lattices given in section 10, and works for projective schemes of any dimension. This will possibly be the topic of a future paper.
\end{remark} 

\section{The Numerical Zig-Zag Lemma}

In this section, all objects and morphisms are in some fixed category $\mathfrak{C}$, which we assume to be a left-exact ranked category with kernels.

Suppose we have a commutative diagram as follows:

\begin{tikzpicture}
\matrix (m) [
matrix of math nodes,
row sep=2.5em,
column sep=2.5em,
text height=1.5ex, text depth=0.25ex
]
{ C_0 = 0 & C_1 & C_2 & C_3 & \ldots\\
	B_0 = 0 & B_1 & B_2 & B_3 & \ldots\\
	A_0 = 0 & A_1 & A_2 & A_3 & \ldots\\
};

\path[overlay,->, font=\scriptsize,>=latex]
(m-1-1) edge node[above] {$c_0$} (m-1-2)
(m-1-2) edge node[above] {$c_1$} (m-1-3)
(m-1-3) edge node[above] {$c_2$} (m-1-4)
(m-1-4) edge node[above] {$c_3$} (m-1-5)
(m-2-1) edge node[above] {$b_0$} (m-2-2)
(m-2-2) edge node[above] {$b_1$} (m-2-3)
(m-2-3) edge node[above] {$b_2$} (m-2-4)
(m-2-4) edge node[above] {$b_3$} (m-2-5)
(m-3-1) edge node[above] {$a_0$} (m-3-2)
(m-3-2) edge node[above] {$a_1$} (m-3-3)
(m-3-3) edge node[above] {$a_2$} (m-3-4)
(m-3-4) edge node[above] {$a_3$} (m-3-5)
(m-3-1) edge node[left] {$f_0$} (m-2-1)
(m-3-2) edge node[left] {$f_1$} (m-2-2)
(m-3-3) edge node[left] {$f_2$} (m-2-3)
(m-3-4) edge node[left] {$f_3$} (m-2-4)
(m-2-1) edge node[left] {$g_0$} (m-1-1)
(m-2-2) edge node[left] {$g_1$} (m-1-2)
(m-2-3) edge node[left] {$g_2$} (m-1-3)
(m-2-4) edge node[left] {$g_3$} (m-1-4);
\end{tikzpicture}

Suppose additionally that all rows are chain complexes and all columns are $\epsilon$-additive sequences (see \ref{def:additive}). We will call such a diagram an \textbf{$\epsilon$-additive sequence of chain complexes}.

Let $A$, $B$, and $C$ be the chain complexes corresponding to the rows of the diagram above, and let $(e_k)_{k=0}^\infty = (h_\bullet^0(A), h_\bullet^0(B), h_\bullet^0(C), h_\bullet^1(A), \ldots)$. The main goal of this section will be to prove that $(e_k)$ is almost a numerical exact sequence for small $\epsilon$; this can be thought of as an analogue of the zig-zag lemma. To be more precise, we will work up to the following theorem:

\begin{theorem}[The Numerical Zig-zag Lemma] \label{thm:zigzag}
	For all $k \geq 0$, we have $$\sum\limits_{j=0}^k (-1)^j e_{k-j} \geq -(k+1)\epsilon$$
\end{theorem}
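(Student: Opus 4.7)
My plan is to reduce $S_k$ to a closed-form expression using left-exactness, and then handle it via a numerical snake-lemma argument. For each row $R \in \{A, B, C\}$ and $m \geq 0$, set $K_m^R := \nul(r_{m+1})$ (with the convention $K_{-1}^R := 0$). Left-exactness gives $h_\bullet^m(R) = K_m^R + K_{m-1}^R - \ell(R_m)$, and substituting this into $S_k$ telescopes the $K$-contributions, leaving (according to the residue of $k$ modulo $3$) an expression in the ``kernel defects'' $D_m := K_m^A - K_m^B + K_m^C$ and the ``column errors'' $E_m := \ell(A_m) - \ell(B_m) + \ell(C_m)$, which satisfy $0 \leq E_m \leq \epsilon$ by $\epsilon$-additivity. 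For example, one expects the identity $S_{3M+2} = D_M - \sum_{m=1}^M (-1)^{M-m}E_m$, and similar expressions in the other two residue classes.

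The first key step is to show $D_m \geq 0$ for every $m$. The commutative squares of the diagram, together with the universal property of kernels, produce induced morphisms $\tilde f_{m+1} : \Ker(a_{m+1}) \to \Ker(b_{m+1})$ and $\tilde g_{m+1} : \Ker(b_{m+1}) \to \Ker(c_{m+1})$. Using Propositions \ref{thm:restriction_kernel} and \ref{thm:kernel_compose}, together with the fact that $f_{m+2}$ is monic (being a kernel), I would show that $\tilde f_{m+1}$ is a kernel of $\tilde g_{m+1}$. Left-exactness then gives $\rk(\tilde g_{m+1}) = K_m^B - K_m^A$, and Proposition \ref{thm:factors2} gives $\rk(\tilde g_{m+1}) \leq K_m^C$; these combine to $D_m \geq 0$. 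This already settles the case where $k$ has residue $2$, with plenty of room to spare.

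The main obstacle will be the other two residue classes, where the closed-form expressions contain a difference like $h_\bullet^{M+1}(A) - D_M$ and one needs a numerical version of the snake lemma to control it. My plan is to construct a numerical connecting morphism abstractly: form $P_m := \Ker(c_m \circ g_m) = \Ker(g_{m+1} \circ b_m)$, restrict $g_m$ to $P_m$ to obtain a morphism factoring through $\Ker(c_m)$ whose rank is within $O(\epsilon)$ of $K_{m-1}^C$ (using $\epsilon$-additivity of column $m$ to force approximate surjectivity of $g_m$ onto $C_m$), and restrict $b_m$ to $P_m$ to obtain a morphism factoring through $\Ker(g_{m+1}) \simeq A_{m+1}$ via $f_{m+1}$ whose image sits inside $\Ker(a_{m+1})$ (using $b_{m+1} \circ b_m = 0$ together with the monicity of $f_{m+2}$). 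A rank chase across this configuration, using left-exactness at every step, should yield an upper bound of the form $D_m \leq h_\bullet^{m+1}(A) + O(\epsilon)$.

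With these bounds in place, substituting back into the closed forms and bounding each $E_m$ by $\epsilon$ termwise yields $S_k \geq -(k+1)\epsilon$ in all three cases. The difficulty is concentrated in the snake-lemma step; everything else is telescoping and direct application of the rank axioms.
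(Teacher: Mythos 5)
Your reduction is right as far as it goes: the telescoping identity $S_{3M+2} = D_M - \sum_{m=1}^{M}(-1)^{M-m}E_m$ checks out, your kernel chase showing that the induced $\tilde f_{m+1}:\Ker(a_{m+1})\to\Ker(b_{m+1})$ is a kernel of $\tilde g_{m+1}:\Ker(b_{m+1})\to\Ker(c_{m+1})$ is correct (and gives $D_m = \nul(c_{m+1}) - \rk(\tilde g_{m+1}) \geq 0$), and the $P_m$-configuration with $\sigma_m, \tau_m$ does yield $D_m \leq h_\bullet^{m+1}(A) + O(\epsilon)$ by exactly the kind of rank chase you describe. This is, in effect, the paper's own argument with the bookkeeping reorganized: the paper's $\gamma_k$ is your $D_k$, the paper's $\alpha_{k+1}$ is your $h_\bullet^{k+1}(A) - D_k$, and the paper's connecting-morphism construction (the $q_k$ in the proof of Lemma \ref{thm:zigzag9}(a)) is your $\tau_m$.

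The gap is in the residue-$1$ class. Carrying out the telescoping gives
\begin{equation*}
S_{3M+1} \;=\; h_\bullet^M(B) \;-\; \bigl(h_\bullet^M(A) - D_{M-1}\bigr) \;+\; \sum_{m=1}^{M-1}(-1)^{M-m}E_m,
\end{equation*}
and the problematic quantity is $h_\bullet^M(A) - D_{M-1}$ from \emph{above}, not below. Your two lemmas only give $h_\bullet^M(A) - D_{M-1} \geq -\epsilon$ and $D_{M-1} \geq 0$; neither controls how large $h_\bullet^M(A) - D_{M-1}$ can be relative to $h_\bullet^M(B)$. What is actually needed is the paper's $\beta_{M-1} \geq -\epsilon$, i.e.
\begin{equation*}
\nul(b_M) - \nul(a_M) - \rk(c_{M-1}) \;\geq\; -\epsilon,
\end{equation*}
equivalently $D_{M-1} \leq h_\bullet^{M-1}(C) + \epsilon$ after rewriting. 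This is a third, genuinely distinct rank chase: it uses your $\rk(\tilde g_M) = \nul(b_M) - \nul(a_M)$, but then crucially the factorization of $b_{M-1}$ through $\ker(b_M)$ (coming from $b_M \circ b_{M-1} = 0$), which gives $\rk(g_M \circ b_{M-1}) \leq \rk(g_M \circ \ker(b_M)) = \rk(\tilde g_M)$, together with $\rk(g_M \circ b_{M-1}) = \rk(c_{M-1}\circ g_{M-1}) \geq \rk(c_{M-1}) - \epsilon$ by $\epsilon$-additivity of column $M-1$. None of this is visible from the $P_m$-configuration you set up (which never touches $\ker(b_M)$ against $b_{M-1}$), so "a rank chase across this configuration" does not deliver it. The fix is routine once identified, but as written the proposal claims only two lemmas suffice where three are needed, so the residue-$1$ case is left unproven.
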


One could use the ideas in this proof to get a stronger inequality than the one above, but this one suffices for our purposes (essentially we just need a bound that goes to $0$ as $\epsilon \rightarrow 0$) and is more convenient to prove.

The strategy will be to first find simpler expressions that approximate the alternating sums, and then to show that these simpler expressions can be bounded from below by a small negative constant, using the inequalities that $\rk$ obeys.

For $k \geq 0$, let $a_k$, $b_k$, and $c_k$ be given by:
$$\alpha_k = \nul(a_{k+1}) - \ell(A_k) + \nul(b_k) - \nul(c_k)$$
$$\beta_k = \nul(b_{k+1}) - \nul(a_{k+1}) - \rk(c_k)$$
$$\gamma_k = \nul(a_{k+1}) - \nul(b_{k+1}) + \nul(c_{k+1})$$

\begin{lemma} \label{thm:zigzag1}
	For all $k \geq 0$, we have:
	\begin{equation} \tag{a}
		\alpha_k + \beta_k - h_\bullet^k(B) \in [-\epsilon, 0]
	\end{equation}
	\begin{equation} \tag{b}
		\beta_k + \gamma_k - h_\bullet^k(C) = 0
	\end{equation}
	\begin{equation} \tag{c}
		\gamma_k + \alpha_{k + 1} - h_\bullet^{k+1}(A) = 0
	\end{equation}
\end{lemma}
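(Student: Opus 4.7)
The plan is to verify each of the three identities by direct substitution of the definitions of $\alpha_k$, $\beta_k$, $\gamma_k$, and $h_\bullet^k$, then to simplify using (i) the left-exactness relation $\rk(f) = \ell(\mathrm{dom}(f)) - \nul(f)$ from \ref{def:left_exact}, and (ii) the $\epsilon$-additivity of the $k$-th column $A_k \xrightarrow{f_k} B_k \xrightarrow{g_k} C_k$, which gives $\ell(A_k) - \ell(B_k) + \ell(C_k) \in [0, \epsilon]$. All three parts are essentially bookkeeping; the only subtlety is that part (a) is the one where the $\epsilon$ enters, and parts (b) and (c) are exact.

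Part (b) should be purely formal: expanding
$$\beta_k + \gamma_k - h_\bullet^k(C) = \bigl(\nul(b_{k+1}) - \nul(a_{k+1}) - \rk(c_k)\bigr) + \bigl(\nul(a_{k+1}) - \nul(b_{k+1}) + \nul(c_{k+1})\bigr) - \bigl(\nul(c_{k+1}) - \rk(c_k)\bigr),$$
every term appears with both signs and cancels; no appeal to left-exactness or $\epsilon$-additivity is needed. For part (c), expanding $\gamma_k + \alpha_{k+1} - h_\bullet^{k+1}(A)$ causes the $\nul(b_{k+1})$, $\nul(c_{k+1})$, and $\nul(a_{k+2})$ terms to cancel pairwise, leaving the residue $\rk(a_{k+1}) + \nul(a_{k+1}) - \ell(A_{k+1})$. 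Applying left-exactness to $a_{k+1} \colon A_{k+1} \to A_{k+2}$ gives $\rk(a_{k+1}) = \ell(A_{k+1}) - \nul(a_{k+1})$, so this residue is zero.

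Part (a) is where the $\epsilon$ bound arises. After expanding $\alpha_k + \beta_k - h_\bullet^k(B)$ and cancelling the $\nul(a_{k+1})$ and $\nul(b_{k+1})$ pairs, I expect to be left with $-\ell(A_k) + \nul(b_k) - \nul(c_k) + \rk(b_k) - \rk(c_k)$. Then left-exactness applied to $b_k$ and $c_k$ replaces $\rk(b_k) - \nul(b_k)$ by $\ell(B_k) - 2\nul(b_k)$… more cleanly: $\rk(b_k) = \ell(B_k) - \nul(b_k)$ and $\rk(c_k) = \ell(C_k) - \nul(c_k)$, which collapses the expression to $\ell(B_k) - \ell(A_k) - \ell(C_k) = -\bigl(\ell(A_k) - \ell(B_k) + \ell(C_k)\bigr)$. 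By $\epsilon$-additivity of the $k$-th column, this lies in $[-\epsilon, 0]$, giving (a). There is no real obstacle here; the only thing to watch is keeping the bookkeeping of which $\nul$ and $\rk$ terms correspond to which morphism straight, and remembering that left-exactness is what makes $\rk$ interact cleanly with $\ell$ and $\nul$.
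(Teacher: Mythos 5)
Your proposal is correct and follows essentially the same route as the paper: expand the definitions of $\alpha_k$, $\beta_k$, $\gamma_k$, and $h_\bullet^k$, cancel the $\nul$ terms, and then apply left-exactness together with $\epsilon$-additivity of the $k$-th column. The bookkeeping in all three parts, including the observation that (b) is a pure telescoping identity and that only (a) uses $\epsilon$-additivity, matches the paper's argument.
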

\begin{proof}
	For (a), we immediately get the following from expanding the definitions and simplifying:
	$$\alpha_k + \beta_k - h_\bullet^k(B) = - \nul(c_k) - \rk(c_k) + \nul(b_k) + \rk(b_k) - \ell(A_k)$$
	Combining the above with the left-exactness of $\rk$ yields:
	$$\alpha_k + \beta_k - h_\bullet^k(B) = - \ell(C_k) + \ell(B_k) - \ell(A_k)$$
	and finally, since $A_k \rightarrow B_k \rightarrow C_k$ is $\epsilon$-additive, we must have that the right side is in $[-\epsilon, 0].$
	
	We get (b) immediately from expanding the definitions and simplifying.
	
	For (c), we immediately have $$\gamma_k + \alpha_{k + 1} - h_\bullet^{k+1}(A) = \nul(a_{k+1}) + \rk(a_{k+1}) - \ell(A_{k+1})$$ and the right side is $0$ from left-exactness.
\end{proof}
	
Let $(\phi_k)_{k=0}^\infty = (\alpha_0, \beta_0, \gamma_0, \alpha_1, \beta_1, \gamma_1, \alpha_2,\ldots)$. The previous lemma then yields, for all $k \geq 0$:
\begin{equation} \tag{1} \label{eqn:e_f}
	=\phi_k + \phi_{k+1} - e_{k+1} \in [-\epsilon, 0]
\end{equation}

This can be used to prove the following lemma, which essentially says that the alternating sums of $(e_k)$ are approximated by $(\phi_k)$ for small $\epsilon$:

\begin{lemma} \label{thm:zigzag2}
	For all $k \geq 0$, we have $\sum\limits_{j=0}^k (-1)^j e_{k-j} - \phi_k \in [-k\epsilon, k\epsilon]$.
\end{lemma}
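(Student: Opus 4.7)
The plan is to prove the lemma by induction on $k$, with relation \eqref{eqn:e_f} serving as the engine. The base case $k=0$ amounts to verifying $e_0 = \phi_0$. Since $A_0 = B_0 = C_0 = 0$, the map $a_0$ factors through a zero object, so $\rk(a_0) = 0$, and kernels of maps out of $0$ give $\nul(b_0) = \nul(c_0) = 0$; of course $\ell(A_0) = 0$ as well. Thus $e_0 = h^0_\bullet(A) = \nul(a_1)$ and $\phi_0 = \alpha_0 = \nul(a_1)$, so the claim holds with the interval $[0, 0]$.

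For the inductive step, I would introduce the shorthand $S_k := \sum_{j=0}^k (-1)^j e_{k-j}$ and note the one-line identity $S_{k+1} = e_{k+1} - S_k$. Assuming inductively that $S_k - \phi_k \in [-k\epsilon, k\epsilon]$, I would then decompose
$$S_{k+1} - \phi_{k+1} = \bigl(e_{k+1} - \phi_k - \phi_{k+1}\bigr) + \bigl(\phi_k - S_k\bigr).$$
Relation \eqref{eqn:e_f} (rewritten as $e_{k+1} - \phi_k - \phi_{k+1} \in [0, \epsilon]$) controls the first summand, while the inductive hypothesis controls the second. Adding the two intervals gives $S_{k+1} - \phi_{k+1} \in [-k\epsilon, (k+1)\epsilon]$, which is contained in $[-(k+1)\epsilon, (k+1)\epsilon]$, closing the induction.

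I don't foresee any real obstacle: the substantive content has already been absorbed into Lemma \ref{thm:zigzag1} and the resulting relation \eqref{eqn:e_f}, and what remains is essentially a telescoping bookkeeping argument. The only spot that deserves care is the base case, where one must check that the padding zero objects at position $0$ force $e_0$ and $\phi_0$ to collapse to the same expression $\nul(a_1)$; once that is done, the induction runs on autopilot.
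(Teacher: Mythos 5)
Your proof is correct and takes essentially the same route as the paper's: both are inductions driven entirely by relation \eqref{eqn:e_f}, with your explicit decomposition of $S_{k+1} - \phi_{k+1}$ being just a repackaging of the paper's step of adding \eqref{eqn:e_f} to the inductive hypothesis and rearranging. Your base case is a bit more careful than the paper's about why the padding zeros force $e_0 = \phi_0 = \nul(a_1)$, but the substance is identical.
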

\begin{proof}
	This holds when $k = 0$, since $e_0 = \phi_0 = \nul(a_1)$. Suppose the result holds for some arbitrary $k$. Then we have 
	$$\sum\limits_{j=0}^k (-1)^j e_{k-j} - \phi_k \in [-k\epsilon, k\epsilon]$$
	Adding \eqref{eqn:e_f} yields:
	$$\sum\limits_{j=0}^k(-1)^j e_{k-j} + \phi_{k+1} - e_{k+1} \in [-(k+1)\epsilon, k\epsilon]$$
	This is clearly equivalent to:
	$$\sum\limits_{j=0}^{k+1} (-1)^j e_{k+ 1 -j} - \phi_{k+1} \in [-k\epsilon, (k+1)\epsilon]$$
	and so the result holds for k+1. Therefore, by induction, it must hold for all $k$.
\end{proof}

Now that we have that the alternating sums of $(e_k)$ are approximated by $(\phi_k)$, we can use bounds on the latter to obtain bounds on the former. To find bounds on the $(\phi_k)$, we'll find even simpler expressions that approximate the $\alpha_k$, $\beta_k$, and $\gamma_k$, and then use basic properties of ranked categories to bound these simpler expressions. First, however, we'll need a few technical lemmas.

\begin{lemma} \label{thm:zigzag3}
	For all $k \geq 0$, $\Ker(f_{k+1} \circ a_k) \simeq \Ker(a_k)$
\end{lemma}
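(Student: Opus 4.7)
The plan is to reduce the claim to the elementary fact (Proposition \ref{thm:kernel_compose}) that precomposing with a monomorphism does not change the kernel. Concretely, I would argue as follows.

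First I would unpack the hypothesis on the columns. By assumption each column $A_k \xrightarrow{f_k} B_k \xrightarrow{g_k} C_k$ is an $\epsilon$-additive sequence, and the definition of $\epsilon$-additivity (Definition \ref{def:additive}) requires $f_k$ to be a kernel of $g_k$. Applied at index $k+1$, this tells me $f_{k+1}$ is a kernel of $g_{k+1}$, and hence by Proposition \ref{thm:kernel_monic} the morphism $f_{k+1}$ is monic.

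Now I consider the composite $f_{k+1} \circ a_k \colon A_k \to B_{k+1}$. Since $f_{k+1}$ is monic, Proposition \ref{thm:kernel_compose} (with the roles $f := a_k$ and $g := f_{k+1}$) applies and says that a kernel of $a_k$ is also a kernel of $f_{k+1} \circ a_k$, and vice versa. In particular the kernel objects agree up to canonical isomorphism, which is exactly the statement $\Ker(f_{k+1} \circ a_k) \simeq \Ker(a_k)$ that we wanted.

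There is essentially no obstacle here: the only subtlety is to notice that the $\epsilon$-additivity hypothesis buys us, at no cost, that $f_{k+1}$ is a genuine kernel and therefore monic, at which point Proposition \ref{thm:kernel_compose} finishes the proof immediately. It is worth remarking that we do not need to use the commutativity relation $f_{k+1} \circ a_k = b_k \circ f_k$ at all for this lemma; commutativity will only enter in the subsequent lemmas where the horizontal and vertical structure must interact.
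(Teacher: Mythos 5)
Your argument is correct and follows exactly the route the paper takes, which simply cites Propositions \ref{thm:kernel_monic} and \ref{thm:kernel_compose}; you have just spelled out the intermediate steps (that $\epsilon$-additivity of the column forces $f_{k+1}$ to be a kernel, hence monic). Your closing remark that commutativity is not used here is also accurate.
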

\begin{proof}
	This follows from \ref{thm:kernel_monic} and \ref{thm:kernel_compose}.
\end{proof}

\begin{lemma} \label{thm:zigzag4}
	For all $k \geq 0$, $\rk(g_k \circ \ker(b_k)) = \nul(b_k) - \nul(a_k)$
\end{lemma}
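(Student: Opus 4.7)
The plan is to reduce the rank computation to a nullity computation via left-exactness, and then chase the kernel through the diagram using the results collected in sections 3 and 4, together with Lemma \ref{thm:zigzag3}.

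First I would apply left-exactness of $\rk$ to the morphism $g_k \circ \ker(b_k)$, whose domain is $\Ker(b_k)$. This gives
\[
\rk\bigl(g_k \circ \ker(b_k)\bigr) \;=\; \ell(\Ker(b_k)) - \nul\bigl(g_k \circ \ker(b_k)\bigr) \;=\; \nul(b_k) - \nul\bigl(g_k \circ \ker(b_k)\bigr).
\]
So the task reduces to showing $\nul(g_k \circ \ker(b_k)) = \nul(a_k)$.

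Next I would rewrite the kernel on the left. By Proposition \ref{thm:restriction_kernel}, applied with $f = b_k$ and $g = g_k$, we have
\[
\Ker\bigl(g_k \circ \ker(b_k)\bigr) \;\simeq\; \Ker\bigl(b_k \circ \ker(g_k)\bigr).
\]
Since each column of the diagram is $\epsilon$-additive, $f_k$ is (by definition) a kernel of $g_k$, so by \ref{thm:restriction_rank_defined} (and more basically Proposition \ref{thm:isomorphic_ell3}) we may replace $\ker(g_k)$ with $f_k$ without changing the nullity of the composite. Commutativity of the diagram then gives $b_k \circ f_k = f_{k+1} \circ a_k$, so
\[
\Ker\bigl(b_k \circ \ker(g_k)\bigr) \;\simeq\; \Ker\bigl(b_k \circ f_k\bigr) \;=\; \Ker\bigl(f_{k+1} \circ a_k\bigr).
\]
Finally, Lemma \ref{thm:zigzag3} identifies $\Ker(f_{k+1} \circ a_k) \simeq \Ker(a_k)$. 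Stringing the isomorphisms together and applying Proposition \ref{thm:isomorphic_ell} gives $\nul(g_k \circ \ker(b_k)) = \nul(a_k)$, and substituting back into the opening display yields $\rk(g_k \circ \ker(b_k)) = \nul(b_k) - \nul(a_k)$, as required.

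The main (very mild) obstacle is the bookkeeping around choices of kernels: $\ker(g_k)$ need not literally equal $f_k$, only be isomorphic to it as a kernel, so one must invoke the ``independence from choice of kernel'' results (\ref{thm:isomorphic_ell}, \ref{thm:restriction_rank_defined}, \ref{thm:isomorphic_ell3}) in the right place. Otherwise the argument is a straightforward chain of rewrites, each justified by a single cited proposition.
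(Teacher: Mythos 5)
Your proof is correct and follows essentially the same route as the paper: apply \ref{thm:restriction_kernel} to swap the roles of $b_k$ and $g_k$, use that $f_k$ is a kernel of $g_k$ plus commutativity to identify the kernel with $\Ker(f_{k+1}\circ a_k)$, invoke \ref{thm:zigzag3}, and finish with left-exactness. The only (immaterial) difference is that you apply left-exactness at the start while the paper applies it at the end.
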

\begin{proof}
	From \ref{thm:restriction_kernel}, we have
	$$\Ker(g_k\circ\ker(b_k)) \simeq \Ker(b_k\circ\ker(g_k)) \simeq \Ker(b_k \circ f_k)$$
	Combining the above with commutativity and \ref{thm:zigzag3}, we have
	$$\Ker(g_k\circ \ker(b_k)) \simeq \Ker(f_{k+1} \circ a_k) \simeq \Ker(a_k)$$
	From left-exactness, we have
	$$\rk(g_k\circ \ker(b_k)) = \ell(\Ker(b_k)) - \ell(\Ker(g_k\circ\ker(b_k)))$$
	$$= \nul(b_k) - \ell(\Ker(a_k))$$
	$$= \nul(b_k) - \nul(a_k)$$
\end{proof}
\begin{lemma} \label{thm:zigzag5}
	For all $k \geq 0$, $\rk(b_k\circ\ker(g_{k+1} \circ b_k)) = \nul(g_{k+1} \circ b_k) - \nul(b_k)$
\end{lemma}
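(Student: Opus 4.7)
The plan is to reduce this identity to left-exactness of the rank together with Corollary \ref{thm:restriction_kernel2}. The right-hand side is just $\ell(\Ker(g_{k+1} \circ b_k)) - \nul(b_k)$, and the left-hand side is the rank of a morphism whose domain is exactly $\Ker(g_{k+1} \circ b_k)$, so left-exactness should immediately produce an expression of the correct form, and then the only remaining task is to identify the kernel of $b_k \circ \ker(g_{k+1} \circ b_k)$ with $\Ker(b_k)$.

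First I would apply \ref{def:left_exact} to the morphism $b_k \circ \ker(g_{k+1} \circ b_k)$, whose domain is $\Ker(g_{k+1} \circ b_k)$. This gives
$$\rk(b_k \circ \ker(g_{k+1} \circ b_k)) = \ell(\Ker(g_{k+1} \circ b_k)) - \nul(b_k \circ \ker(g_{k+1} \circ b_k)).$$
By Definition \ref{def:nullity}, the first term on the right equals $\nul(g_{k+1} \circ b_k)$, matching the first term of the claimed formula.

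The step that does the real work is identifying the nullity $\nul(b_k \circ \ker(g_{k+1} \circ b_k))$ with $\nul(b_k)$. For this I would apply Corollary \ref{thm:restriction_kernel2} with $f = b_k$ and $g = g_{k+1}$; this is exactly the statement that $\Ker(b_k) \simeq \Ker(b_k \circ \ker(g_{k+1} \circ b_k))$. Then \ref{thm:isomorphic_ell} gives $\ell(\Ker(b_k \circ \ker(g_{k+1} \circ b_k))) = \ell(\Ker(b_k))$, i.e. $\nul(b_k \circ \ker(g_{k+1} \circ b_k)) = \nul(b_k)$. Substituting back gives the required equality.

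There is no real obstacle; the lemma is essentially bookkeeping that packages \ref{thm:restriction_kernel2} together with the definition of a left-exact rank, in direct parallel to how Lemma \ref{thm:zigzag4} packaged \ref{thm:restriction_kernel} with left-exactness. The only thing to be careful about is lining up the roles of $f$ and $g$ in \ref{thm:restriction_kernel2} correctly (note that there $g \circ f$ appears, which matches $g_{k+1} \circ b_k$ here).
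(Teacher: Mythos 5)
Your proof is correct and matches the paper's argument exactly: both invoke Corollary \ref{thm:restriction_kernel2} to identify $\Ker(b_k\circ\ker(g_{k+1}\circ b_k))$ with $\Ker(b_k)$ and then apply left-exactness of $\rk$ to the morphism $b_k\circ\ker(g_{k+1}\circ b_k)$. The paper states this in one sentence; you have simply spelled out the intermediate steps (including the use of \ref{thm:isomorphic_ell} to pass from the isomorphism of kernel objects to equality of nullities), which the paper leaves implicit.
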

\begin{proof}
	We have from \ref{thm:restriction_kernel2} that $\Ker(b_k\circ\ker(g_{k+1} \circ b_k)) \simeq \Ker(b_k)$, the result then follows from left-exactness.
\end{proof}

\begin{lemma} \label{thm:zigzag6}
	For all $k \geq 0$, $\rk(g_k\circ\ker(c_k \circ g_k)) = \nul(c_k \circ g_k) - \ell(A_k)$
\end{lemma}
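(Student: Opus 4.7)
The plan is to mimic the pattern of Lemmas \ref{thm:zigzag4} and \ref{thm:zigzag5}: apply left-exactness to $g_k\circ\ker(c_k\circ g_k)$ to reduce the claim to computing the nullity of this composite, and then identify that nullity with $\ell(A_k)$ by a kernel-swap via Proposition \ref{thm:restriction_kernel}.

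More precisely, set $h = c_k\circ g_k$ and write $k^\ast = \ker(h)$. By left-exactness,
\[
\rk(g_k\circ k^\ast) \;=\; \ell(\Ker(h)) - \nul(g_k\circ k^\ast) \;=\; \nul(c_k\circ g_k) - \nul(g_k\circ\ker(c_k\circ g_k)),
\]
so it suffices to prove $\nul(g_k\circ\ker(c_k\circ g_k)) = \ell(A_k)$. Here I would invoke Proposition \ref{thm:restriction_kernel} on the two morphisms $g_k\colon B_k\to C_k$ and $c_k\circ g_k\colon B_k\to C_{k+1}$ out of $B_k$, which gives
\[
\Ker\bigl(g_k\circ\ker(c_k\circ g_k)\bigr) \;\simeq\; \Ker\bigl((c_k\circ g_k)\circ\ker(g_k)\bigr).
\]

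Now the columns of our diagram are $\epsilon$-additive sequences, which by Definition \ref{def:additive} means $f_k$ is a kernel of $g_k$; thus we may take $\ker(g_k)=f_k$, and then $(c_k\circ g_k)\circ f_k = c_k\circ(g_k\circ f_k) = c_k\circ 0 = 0_{A_k,C_{k+1}}$. By Proposition \ref{thm:zero_kernel}, the identity on $A_k$ is a kernel of this zero morphism, so $\Ker\bigl((c_k\circ g_k)\circ\ker(g_k)\bigr)\simeq A_k$. Combining with Proposition \ref{thm:isomorphic_ell} gives $\nul(g_k\circ\ker(c_k\circ g_k)) = \ell(A_k)$, and substituting back yields the desired identity.

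I do not expect a real obstacle here: the argument is essentially bookkeeping once one chooses the right instance of \ref{thm:restriction_kernel}. The only subtle point is recognizing that the $\epsilon$-additivity of the columns supplies the crucial fact that $f_k$ is literally a kernel of $g_k$ (not merely a monomorphism into $B_k$), which is what makes the composite $(c_k\circ g_k)\circ\ker(g_k)$ collapse to zero and trigger Proposition \ref{thm:zero_kernel}.
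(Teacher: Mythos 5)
Your proof is correct and follows essentially the same route as the paper: the paper invokes Corollary~\ref{thm:restriction_kernel2} directly to get $\Ker(g_k\circ\ker(c_k\circ g_k))\simeq\Ker(g_k)\simeq A_k$ and then applies left-exactness, whereas you inline the proof of that corollary by applying Proposition~\ref{thm:restriction_kernel} and Proposition~\ref{thm:zero_kernel} by hand. Your explicit note that $\epsilon$-additivity of the columns is what makes $f_k$ a literal kernel of $g_k$ is a point the paper leaves implicit, but the substance is identical.
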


\begin{proof}
	From \ref{thm:restriction_kernel2}, we have
	$$\Ker(g_k\circ \ker(c_k \circ g_k)) \simeq \Ker(g_k) \simeq A_k$$
	This, together with the left-exactness axiom, yields the result.
\end{proof}

For $k \geq 0$, define $x_k$ and $y_k$ by
$$x_k = \nul(g_{k+1} \circ b_k) - \ell(A_k) - \nul(c_k)$$
$$y_k = \rk(g_{k+1} \circ b_k) - \rk(c_k)$$
We then have the following:

\begin{lemma} \label{thm:zigzag7}
	For all $k \geq 0$,
	\begin{equation} \tag{a}
		x_k + y_k \in [-\epsilon, 0]
	\end{equation}
	\begin{equation} \tag{b}
		x_k \leq 0
	\end{equation}
	\begin{equation} \tag{c}
		y_k \leq 0
	\end{equation}
	\begin{equation}
		x_k \geq -\epsilon \tag{d}
	\end{equation}
	\begin{equation}
		y_k \geq -\epsilon \tag{e}
	\end{equation}
\end{lemma}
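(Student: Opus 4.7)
The plan is to first reduce everything to the commutativity relation $g_{k+1} \circ b_k = c_k \circ g_k$, which is available since the diagram commutes. Once we have this, parts (a), (b), (c) become largely mechanical applications of left-exactness and the factoring machinery from Section 4, and parts (d), (e) are immediate combinations of the three.

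For (a), I would expand $x_k + y_k$ and group the $\nul$ and $\rk$ terms for each morphism. By left-exactness, $\nul(g_{k+1} \circ b_k) + \rk(g_{k+1} \circ b_k) = \ell(B_k)$ and $\nul(c_k) + \rk(c_k) = \ell(C_k)$, so the whole expression collapses to $\ell(B_k) - \ell(A_k) - \ell(C_k)$. Since the $k$-th column $A_k \to B_k \to C_k$ is $\epsilon$-additive by hypothesis, this quantity lies in $[-\epsilon, 0]$.

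For (b), I rewrite $g_{k+1} \circ b_k$ as $c_k \circ g_k$ and apply Lemma \ref{thm:zigzag6} to get $\nul(c_k \circ g_k) - \ell(A_k) = \rk\bigl(g_k \circ \ker(c_k \circ g_k)\bigr)$. The morphism $g_k \circ \ker(c_k \circ g_k)$ satisfies $c_k \circ \bigl(g_k \circ \ker(c_k \circ g_k)\bigr) = 0$, so it factors through $\ker(c_k)$; by Propositions \ref{thm:factors} and \ref{thm:factors2} its rank is at most $\ell(\Ker(c_k)) = \nul(c_k)$, giving $x_k \leq 0$. For (c), I note that $c_k \circ g_k$ factors through $c_k$ (writing it as $c_k \circ g_k$ is itself such a factorization), so Proposition \ref{thm:factors} yields $\rk(c_k \circ g_k) \leq \rk(c_k)$, i.e.\ $y_k \leq 0$.

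Finally, (d) follows by combining $x_k + y_k \geq -\epsilon$ from (a) with $y_k \leq 0$ from (c): $x_k \geq -\epsilon - y_k \geq -\epsilon$. Symmetrically (e) follows from (a) and (b). The only step with any real content is (b), where one has to recognize that Lemma \ref{thm:zigzag6} converts the nullity difference into the rank of a morphism that visibly factors through $\ker(c_k)$; everything else is bookkeeping with the left-exactness identity $\nul + \rk = \ell$ and the $\epsilon$-additivity hypothesis on columns.
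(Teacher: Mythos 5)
Your proof is correct and follows the paper's argument in all essentials: part (a) via the left-exactness identity and $\epsilon$-additivity, part (b) via Lemma \ref{thm:zigzag6} plus factoring through $\ker(c_k)$ (the paper cites \ref{thm:zero_rank}, which is just your argument packaged as a proposition), part (c) via the second rank axiom, and (d), (e) by combining. The only differences are cosmetic choices of which already-proved lemmas to cite.
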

\begin{proof}
	For (a), we get the following just by rearranging terms:
	$$x_k + y_k = \rk(g_{k+1} \circ b_k) +\nul(g_{k+1} \circ b_k) - \rk(c_k) - \nul(c_k) - \ell(A_k)$$
	Using the left-exactness axiom twice yields:
	$$x_k + y_k = \ell(B_k) - \ell(C_k) - \ell(A_k)$$
	We know the right side is in $[-\epsilon, 0]$ from $\epsilon$-additivity, and this yields (a).
	
	For (b), clearly $c_k \circ g_k\circ \ker(c_k \circ g_k) = 0$, so by \ref{thm:zero_rank}, $\rk(g_k\circ \ker(c_k \circ g_k)) \leq \nul(c_k)$. Combining this with \ref{thm:zigzag6} yields
	$$\nul(c_k \circ g_k) - \ell(A_k) - \nul(c_k) \leq 0$$
	From commutativity, we can substitute $g_{k+1} \circ b_k$ for $c_k \circ g_k$ above, which gives the result.
	
	We get (c) from commutativity and the second rank axiom:
	$$y_k = \rk(g_{k+1} \circ b_k) - \rk(c_k) = \rk(c_k \circ g_k) - \rk(c_k) \leq 0$$
	
	We get (d) from combining (c) and (a). Similarly, we get (e) from combining (b) and (a).
\end{proof}

We now have the tools to establish these approximations of $\alpha_k$, $\beta_k$, and $\gamma_k$:

\begin{lemma} \label{thm:zigzag8}
	For all $k \geq 0$,
	\begin{equation} \tag{a}
		\alpha_k - \nul(a_{k+1}) + \rk(b_k\circ \ker(g_{k+1} \circ b_k)) \in [-\epsilon, 0]
	\end{equation}
	\begin{equation} \tag{b}
		\beta_k - \rk(g_{k+1}\circ \ker(b_{k+1})) + \rk(g_{k+1} \circ b_k) \in [-\epsilon, 0]
	\end{equation}
	\begin{equation} \tag{c}
		\gamma_k - \nul(c_{k+1}) + \rk(g_{k+1}\circ \ker(b_{k+1})) = 0
	\end{equation}
\end{lemma}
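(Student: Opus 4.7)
The plan is to prove each of (a), (b), (c) by direct substitution: expand $\alpha_k$, $\beta_k$, $\gamma_k$ according to their definitions, use one of Lemmas \ref{thm:zigzag4}, \ref{thm:zigzag5}, \ref{thm:zigzag6} to rewrite the rank-of-a-kernel-composition terms in terms of nullities, and then recognize what remains as either $x_k$, $y_k$, or $0$. The hoped-for bounds then follow from Lemma \ref{thm:zigzag7}. No real obstacle is expected; the whole argument is bookkeeping, and the setup of the previous lemmas was exactly so that this step becomes mechanical.

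For part (a), I would use Lemma \ref{thm:zigzag5} to replace $\rk(b_k\circ \ker(g_{k+1}\circ b_k))$ by $\nul(g_{k+1}\circ b_k) - \nul(b_k)$. Substituting the definition of $\alpha_k$ and simplifying, the $\nul(a_{k+1})$ and $\nul(b_k)$ terms cancel, and what remains is exactly $\nul(g_{k+1}\circ b_k) - \ell(A_k) - \nul(c_k) = x_k$. By Lemma \ref{thm:zigzag7}(b) and (d), $x_k\in [-\epsilon,0]$.

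For part (b), I would apply Lemma \ref{thm:zigzag4} with $k$ replaced by $k+1$, so that $\rk(g_{k+1}\circ \ker(b_{k+1})) = \nul(b_{k+1}) - \nul(a_{k+1})$. Substituting the definition of $\beta_k$, the $\nul(b_{k+1})$ and $\nul(a_{k+1})$ terms cancel, leaving $\rk(g_{k+1}\circ b_k) - \rk(c_k) = y_k$, which lies in $[-\epsilon,0]$ by Lemma \ref{thm:zigzag7}(c) and (e).

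For part (c), I would again invoke Lemma \ref{thm:zigzag4} (with index $k+1$) to rewrite $\rk(g_{k+1}\circ \ker(b_{k+1}))$ as $\nul(b_{k+1}) - \nul(a_{k+1})$. Plugging in $\gamma_k = \nul(a_{k+1}) - \nul(b_{k+1}) + \nul(c_{k+1})$ yields a telescoping sum in which every term cancels, giving exactly $0$ as required. Lemma \ref{thm:zigzag6} is not needed for these three identities; it was proved in preparation for Lemma \ref{thm:zigzag7}(b), which is where $x_k \leq 0$ ultimately comes from.
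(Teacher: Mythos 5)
Your proposal is correct and follows exactly the same route as the paper: for each part, expand the definition, apply the corresponding identity from Lemma~\ref{thm:zigzag5} (for (a)) or Lemma~\ref{thm:zigzag4} with index $k+1$ (for (b) and (c)), cancel, and recognize $x_k$, $y_k$, or $0$, then invoke Lemma~\ref{thm:zigzag7}. You merely show the algebraic manipulations that the paper compresses into ``simple arithmetic manipulation,'' and your observation that Lemma~\ref{thm:zigzag6} is used only upstream in Lemma~\ref{thm:zigzag7}(b) is also accurate.
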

\begin{proof}
	For (a), we get the following from \ref{thm:zigzag5} and some simple arithmetic manipulation: 
	$$\alpha_k - \nul(a_{k+1}) + \rk(b_k\circ \ker(g_{k+1} \circ b_k)) = x_k$$
	The result then follows from \ref{thm:zigzag7}.
	
	For (b), we get the following from \ref{thm:zigzag4} and some simple arithmetic manipulation:
	$$\beta_k - \rk(g_{k+1}\circ \ker(b_{k+1})) + \rk(g_{k+1} \circ b_k) = y_k$$
	The result then follows from \ref{thm:zigzag7}.
	
	We get (c) directly from \ref{thm:zigzag4} and some simple arithmetic manipulation.
\end{proof}

Now we'll put bounds on the expressions that approximate the $\alpha_k$, $\beta_k$, and $\gamma_k$.

\begin{lemma} \label{thm:zigzag9}
	For all $k \geq 0$,
	\begin{equation} \tag{a}
		\nul(a_{k+1}) - \rk(b_k\circ \ker(g_{k+1} \circ b_k)) \geq 0
	\end{equation}
	\begin{equation} \tag{b}
		\rk(g_{k+1}\circ \ker(b_{k+1})) - \rk(g_{k+1} \circ b_k) \geq 0
	\end{equation}
	\begin{equation} \tag{c}
		\nul(c_{k}) - \rk(g_k\circ \ker(b_k)) \geq 0
	\end{equation}
\end{lemma}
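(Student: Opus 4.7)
The plan is to prove each of (a), (b), (c) by exhibiting the morphism whose rank appears on the right as factoring through a morphism or object whose rank or $\ell$ bounds it, and then appealing to \ref{thm:factors} or \ref{thm:factors2}. The standing tools I will use are: commutativity of the diagram; that the rows $A$, $B$, $C$ are chain complexes; that each $f_k$ is a kernel of $g_k$ (built into Definition \ref{def:additive} of an $\epsilon$-additive sequence); and that kernels are monic (\ref{thm:kernel_monic}).

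I would start with (c), which is essentially immediate: since $b_k \circ \ker(b_k) = 0$ and $c_k \circ g_k = g_{k+1} \circ b_k$ by commutativity, we get $c_k \circ (g_k \circ \ker(b_k)) = 0$, so $g_k \circ \ker(b_k)$ factors through $\ker(c_k)$, and \ref{thm:factors2} bounds its rank by $\ell(\Ker(c_k)) = \nul(c_k)$. For (b), I would use that $B$ is a chain complex to factor $b_k = \ker(b_{k+1}) \circ h$ for some $h$; whiskering on the left by $g_{k+1}$ shows that $g_{k+1} \circ b_k$ factors through $g_{k+1} \circ \ker(b_{k+1})$, and \ref{thm:factors} closes the case.

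Inequality (a) is the main obstacle, requiring a two-step chase. First, I would observe that $g_{k+1} \circ b_k \circ \ker(g_{k+1} \circ b_k) = 0$, so $b_k \circ \ker(g_{k+1} \circ b_k)$ factors through $\ker(g_{k+1})$; since $f_{k+1}$ is itself a kernel of $g_{k+1}$, this lets us write $b_k \circ \ker(g_{k+1} \circ b_k) = f_{k+1} \circ h$ for some $h$. Next, postcomposing with $b_{k+1}$ and using that $B$ is a chain complex gives $b_{k+1} \circ f_{k+1} \circ h = 0$; commutativity of the square $A_{k+1} \to A_{k+2} \to B_{k+2}$, $B_{k+1} \to B_{k+2}$ rewrites this as $f_{k+2} \circ a_{k+1} \circ h = 0$, and monicity of the kernel $f_{k+2}$ forces $a_{k+1} \circ h = 0$. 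Thus $h$ factors through $\Ker(a_{k+1})$, so $b_k \circ \ker(g_{k+1} \circ b_k)$ factors through the object $\Ker(a_{k+1})$, and \ref{thm:factors2} yields the bound $\nul(a_{k+1})$. The only real care needed is tracking which commutative square and which kernel-as-monomorphism is invoked at each step.
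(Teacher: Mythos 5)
Your proof is correct and follows essentially the same route as the paper's: the paper also factors $b_k \circ \ker(g_{k+1}\circ b_k)$ through $f_{k+1}$, then uses $f_{k+2} \circ a_{k+1} \circ q_k = 0$ to bound the rank by $\nul(a_{k+1})$, with (b) and (c) handled by the same factorization arguments you give. The only cosmetic difference is in (a), where you cancel the monic $f_{k+2}$ directly and then apply \ref{thm:factors2}, whereas the paper reaches the same bound via \ref{thm:zero_rank} together with \ref{thm:zigzag3} (whose proof is exactly that monic-cancellation step).
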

\begin{proof}
For (a), since $f_{k+1}$ is a kernel of $g_{k+1}$, and $g_{k+1} \circ b_k \circ \ker(g_{k+1} \circ b_k) = 0$, we have that $b_k \circ \ker(g_{k+1} \circ b_k)$ factors uniquely through $f_{k+1}$; we'll say $b_k \circ \ker(g_{k+1} \circ b_k) = f_{k+1} \circ q_k$. From commutativity and the fact that $B$ is a chain complex, we have

$$f_{k+2} \circ a_{k+1} \circ q_k = b_{k+1} \circ f_{k+1} \circ q_k = b_{k+1} \circ b_k \circ \ker(g_{k+1} \circ b_k) = 0$$

Therefore, we can apply \ref{thm:zero_rank} to get that $\rk(q_k) \leq \nul(f_{k+2} \circ a_{k+1})$. Applying \ref{thm:zigzag3} then yields $\rk(q_k) \leq \nul(a_{k+1})$. Finally, we have:
$$\rk(b_k \circ \ker(g_{k+1} \circ b_k)) = \rk(f_{k+1} \circ q_k) \leq \rk(q_k) \leq \nul(a_{k+1})$$
which gives us (a).

For (b), since $B$ is a chain complex, we have that $b_k$ factors through $\ker(b_{k+1})$, and so $g_{k+1} \circ b_k$ factors through $g_{k+1}\circ \ker(b_{k+1})$. The result then follows from the second rank axiom.

For (c), we have from commutativity that
$$c_k \circ g_k\circ \ker(b_k) = g_{k+1} \circ b_k\circ \ker(b_k)= 0$$
Therefore, we can apply \ref{thm:zero_rank} to get the result.
\end{proof}

Combining \ref{thm:zigzag8} and \ref{thm:zigzag9}, we have $\alpha_k \geq -\epsilon$, $\beta_k \geq -\epsilon$, and $\gamma_k \geq 0$ for all $k \geq 0$. These three facts together yield:

\begin{lemma} \label{thm:zigzag10}
	For all $k \geq 0$, $\phi_k \geq - \epsilon$.
\end{lemma}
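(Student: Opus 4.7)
The plan is to show the bound $\phi_k \geq -\epsilon$ by separately establishing the bounds $\alpha_k \geq -\epsilon$, $\beta_k \geq -\epsilon$, and $\gamma_k \geq 0$, since the sequence $(\phi_k)$ is just the interleaving $(\alpha_0, \beta_0, \gamma_0, \alpha_1, \beta_1, \gamma_1, \ldots)$. All three of these bounds follow essentially by combining the two preceding lemmas \ref{thm:zigzag8} and \ref{thm:zigzag9}, which are already set up to match each other term-by-term.

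First I would dispatch $\alpha_k \geq -\epsilon$. Lemma \ref{thm:zigzag8}(a) states that $\alpha_k$ equals $\nul(a_{k+1}) - \rk(b_k\circ \ker(g_{k+1} \circ b_k))$ up to an error in $[-\epsilon,0]$, while Lemma \ref{thm:zigzag9}(a) says this expression is itself nonnegative. Subtracting at most $\epsilon$ from a nonnegative quantity yields something $\geq -\epsilon$. The bound $\beta_k \geq -\epsilon$ comes in exactly the same way by pairing \ref{thm:zigzag8}(b) with \ref{thm:zigzag9}(b).

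For $\gamma_k$, the situation is even cleaner. Lemma \ref{thm:zigzag8}(c) is an exact equality, giving $\gamma_k = \nul(c_{k+1}) - \rk(g_{k+1}\circ \ker(b_{k+1}))$. Applying \ref{thm:zigzag9}(c) with the index $k$ shifted up by one shows the right-hand side is $\geq 0$, so $\gamma_k \geq 0$ (and hence trivially $\geq -\epsilon$).

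Finally, since every entry of the sequence $(\phi_k)$ is one of the $\alpha_j$, $\beta_j$, or $\gamma_j$, the three bounds combine to give $\phi_k \geq -\epsilon$ for all $k \geq 0$. There is no real obstacle here — the substantive work was done in \ref{thm:zigzag8} and \ref{thm:zigzag9}; this lemma is purely a bookkeeping step that packages those inequalities into the uniform lower bound needed to feed Lemma \ref{thm:zigzag2} in the proof of \ref{thm:zigzag}.
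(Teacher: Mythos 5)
Your proof is correct and is essentially the same as the paper's: the text immediately preceding the lemma observes that combining \ref{thm:zigzag8} and \ref{thm:zigzag9} gives $\alpha_k \geq -\epsilon$, $\beta_k \geq -\epsilon$, and $\gamma_k \geq 0$, and the lemma follows since every $\phi_k$ is one of these. You correctly note the index shift ($k \mapsto k+1$) needed when pairing \ref{thm:zigzag8}(c) with \ref{thm:zigzag9}(c), a detail the paper leaves implicit.
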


Now we are finally in a position to prove the main theorem.

\begin{proof} [Proof of Theorem \ref{thm:zigzag}]
	From \ref{thm:zigzag2}, we have $\sum\limits_{j=0}^k (-1)^j e_{k-j} - \phi_k \geq -k\epsilon$. Adding this to the inequality of \ref{thm:zigzag10} yields the result.
\end{proof}

\section{Lattice Preliminaries}

\begin{definition} \label{def:lattice}
	We'll say a subgroup $L$ of a finite-dimensional inner product space $V$ is a \textbf{lattice} if it satisfies either of these equivalent conditions:
	\begin{enumerate}
		\item $L$ is generated by a linearly independent set of vectors.
		\item $L$ is a discrete subset of $V$.
	\end{enumerate}
	The equivalence of these conditions is proven in \cite[Chapter 1, Part 4]{Neukirch}.
\end{definition}

\begin{definition} \label{def:local-dimension}
	We'll say a lattice $L$ is \textbf{locally d-dimensional} if $\spn(L)$ is a $d$-dimensional vector space. The usual terminology is to say $L$ is a lattice of rank $d$, but we are already using the term ``rank'' to refer to something else.
\end{definition}

\begin{definition} \label{def:basis}
	A \textbf{basis} of a lattice $L$ is a set of linearly independent vectors $\{x_1 \ldots x_d\}$ that generate $L$. By definition, such a set must exist for any lattice. Given a choice of basis for $L$, we say a \textbf{fundamental mesh} of $L$ is the set of all elements of $\spn(L)$ which can be written as a linear combination of basis elements, with all coefficients in $[0, 1]$. A fundamental mesh $F$ of $L$ clearly has the following useful property: if $x$ is any point of $\spn(L)$, the set $x + F$ contains a point of $L$.
\end{definition}

\begin{proposition} \label{thm:f_R}
	Suppose $L$ and $M$ are lattices, and $f: L \rightarrow M$ is a group homomorphism. Then there exists a unique linear map $f_\mathbb{R}: \spn(L) \rightarrow \spn(M)$ such that $f_\mathbb{R}(x) = f(x)$ for $x \in L$. Furthermore, assuming the following definitions of $||f||$ and $||f_\mathbb{R}||$:
	$$||f(x)|| := \sup_{x \in L} \frac{||f(x)||}{||x||}$$
	$$||f_\mathbb{R}(x)|| := \sup_{x \in \spn(L)} \frac{||f_\mathbb{R}(x)||}{||x||},$$	
	we have $||f_\mathbb{R}|| = ||f||$.
\end{proposition}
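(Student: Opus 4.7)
The plan is to split into two parts: first build $f_\mathbb{R}$ and verify uniqueness, then deduce the norm equality as two opposite inequalities.

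For existence, I would fix a basis $\{x_1, \ldots, x_d\}$ of $L$, which is automatically an $\mathbb{R}$-basis of $\spn(L)$, and define $f_\mathbb{R}$ by $\mathbb{R}$-linear extension: $f_\mathbb{R}(\sum c_i x_i) := \sum c_i f(x_i)$. The one thing to check is that this really extends $f$: for any $x = \sum n_i x_i \in L$ with $n_i \in \mathbb{Z}$, additivity of the group homomorphism $f$ gives $f(x) = \sum n_i f(x_i) = f_\mathbb{R}(x)$. Uniqueness is immediate, since an $\mathbb{R}$-linear map on $\spn(L)$ is determined by its values on an $\mathbb{R}$-basis, and $\{x_1, \ldots, x_d\} \subseteq L$ is such a basis.

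The inequality $\|f\| \leq \|f_\mathbb{R}\|$ is trivial, since $L \setminus \{0\} \subseteq \spn(L) \setminus \{0\}$ and the two maps agree on $L$, so the supremum defining $\|f_\mathbb{R}\|$ is taken over a larger set of the same ratios. The content is in the reverse inequality $\|f_\mathbb{R}\| \leq \|f\|$, and my strategy there is density and continuity. Since $\spn(L)$ is finite-dimensional, the $\mathbb{R}$-linear map $f_\mathbb{R}$ is automatically continuous, and $\mathbb{Q}$-linear combinations of the $x_i$ are dense in $\spn(L)$. For any nonzero $x = \sum c_i x_i \in \spn(L)$, set
$$y_N := \sum_{i=1}^d \lfloor N c_i \rfloor \, x_i \in L, \qquad N \in \mathbb{Z}_{>0},$$
so that $y_N / N \to x$ as $N \to \infty$. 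Using $\mathbb{R}$-linearity of $f_\mathbb{R}$ and the fact that $f_\mathbb{R}|_L = f$,
$$\frac{\|f_\mathbb{R}(y_N/N)\|}{\|y_N/N\|} \;=\; \frac{\|f(y_N)\|}{\|y_N\|} \;\leq\; \|f\|$$
whenever $y_N \neq 0$ (which holds eventually, since $y_N/N \to x \neq 0$). Passing to the limit $N \to \infty$ using continuity of $f_\mathbb{R}$ and of $\|\cdot\|$ yields $\|f_\mathbb{R}(x)\|/\|x\| \leq \|f\|$, and taking the supremum over nonzero $x \in \spn(L)$ gives $\|f_\mathbb{R}\| \leq \|f\|$, completing the proof.

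There is no serious obstacle here: the argument relies only on density of $\mathbb{Q}$ in $\mathbb{R}$ and automatic continuity of linear maps between finite-dimensional normed spaces. The only bookkeeping point is ensuring $y_N \neq 0$ eventually so the ratio $\|f(y_N)\|/\|y_N\|$ makes sense, which follows at once from $y_N/N \to x \neq 0$.
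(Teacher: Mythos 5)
Your proof is correct. Both you and the paper rest on the same underlying idea — approximate a scaled-up real vector by a nearby lattice point and let the scale go to infinity — but the mechanics differ. The paper fixes a fundamental mesh $F$ with radius bounds $r_1 = \sup_{x\in F}\|x\|$, $r_2 = \sup_{x\in F}\|f_\mathbb{R}(x)\|$, assumes $\|f_\mathbb{R}\| > \|f\|$ for contradiction, and finds a lattice point in $\frac{y}{\lambda} + F$ for small $\lambda$ whose ratio exceeds $\|f\|$ via the triangle inequality. You instead pick lattice points $y_N = \sum\lfloor N c_i\rfloor x_i$ using coordinate-wise floors, so that $y_N/N \to x$, and pass directly to the limit using continuity of $f_\mathbb{R}$ and the norm. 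Your version is a bit cleaner: it avoids introducing $r_1,r_2$ and the contradiction scaffolding, and the finite-dimensional continuity of $f_\mathbb{R}$ handles the error control automatically. Your bookkeeping remark that $y_N\neq 0$ eventually (since $y_N/N\to x\neq 0$) is exactly the point that needs to be checked and you handle it correctly.
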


\begin{proof}
	The existence and uniqueness of $f_\mathbb{R}$ is trivial if we use the first definition of lattices. We'll prove now that $||f|| = ||f_\mathbb{R}||$. Let $F$ be a fundamental mesh of $L$. Let $r_1 = \sup_{x \in F} ||x||$ and $r_2 = \sup_{x \in F} ||f(x)||$. We clearly have that $||f_\mathbb{R}|| \geq ||f||$, so either $||f_\mathbb{R}|| > ||f||$ or $||f_\mathbb{R}|| = ||f||$. In the first case, we can find a $y \in \spn(L)$ with $\frac{||f_\mathbb{R}(y)||}{||y||} > ||f||$. We then have for a sufficiently small positive $\lambda \in \mathbb{R}$ that $\frac{||f_\mathbb{R}(y)|| - \lambda r_1 }{||y|| + \lambda r_2} > ||f||$. Let $l$ be a point of $L$ inside $\frac{y}{\lambda} + F$. From the triangle inequality, and the previous inequality, we have:
	$$\frac{||f(l)||}{||l||} \geq \frac{||f_\mathbb{R}(y)|| - \lambda r_1 }{||y|| + \lambda r_2} > ||f||,$$
	which is a contradiction. Therefore $||f_\mathbb{R}|| = ||f||$.
\end{proof}

\begin{definition} \label{def:homomorphism}
	If $L$ and $M$ are lattices, and $f:L \rightarrow M$ is a group homomorphism, we say that $f$ is a \textbf{lattice homomorphism} if $||f|| \leq 1$, or equivalently, if $||f_\mathbb{R}|| \leq 1$.
\end{definition}

\begin{definition} \label{def:lat_cat}
	Define the category of lattices, $\Lat$, to be the category whose objects are lattices and whose morphisms are lattice homomorphisms. This category clearly has a zero object: the one-element lattice containing just a zero. It also has kernels and cokernels, which will be characterized shortly. Also, clearly two lattices are isomorphic in $\Lat$ iff there is an isometric group isomorphism between them.
\end{definition}

\begin{definition} \label{def:sublattice}
	If $L$ is a lattice, a \textbf{sublattice} of L is a subset of $L$ that is also a lattice. A \textbf{normal sublattice} is a sublattice of the form $L \cap V$, where $V$ is a subspace of the vector space $\spn(L)$. We'll say that a lattice homomorphism is a \textbf{normal embedding} if it's an isometric embedding and its image is a normal sublattice of its codomain.
\end{definition}

\begin{proposition} \label{thm:projection_sublattice}
	Suppose $L$ is a lattice with a sublattice $K$. Let $P$ be the projection operator from $\spn(L)$ to $\spn(K)^\bot$. Then $P(L)$ is a lattice.
\end{proposition}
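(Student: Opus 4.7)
The plan is to verify that $P(L)$ satisfies the discreteness characterization (2) of \ref{def:lattice}. Since $P$ is linear, $P(L)$ is already a subgroup of the finite-dimensional inner product space $\spn(K)^\perp$, so it suffices to show that $0$ is an isolated point of $P(L)$; translation then propagates this to every point.

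The key auxiliary object is the intersection $K' := L \cap \spn(K)$, which is a discrete subgroup of $\spn(K)$ and hence itself a lattice by condition (2) of \ref{def:lattice}. Crucially, $P$ vanishes on $K'$. I would fix a basis of $K'$, let $F'$ be the corresponding fundamental mesh from \ref{def:basis}, and set $r := \sup_{f \in F'} \|f\|$; this is finite because $F'$ is bounded.

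The main step is the following reduction. For any $x \in L$, the orthogonal decomposition gives $x - P(x) \in \spn(K) = \spn(K')$, so the fundamental-mesh property produces some $k \in K'$ with $k + x - P(x) \in F'$. Setting $x' := x + k \in L$, we have $P(x') = P(x)$ (because $P(k) = 0$) and $\|x' - P(x')\| \leq r$, whence
$$\|x'\| \leq \|P(x')\| + \|x' - P(x')\| \leq \|P(x)\| + r.$$

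Therefore every element of $P(L)$ of norm less than $\epsilon$ equals $P(x')$ for some $x' \in L$ with $\|x'\| < \epsilon + r$, and such $x'$ are finite in number since $L$ is discrete. The set $\{v \in P(L) : \|v\| < \epsilon\}$ is then finite and contains $0$; choosing $\epsilon$ smaller than the minimum norm of its other elements (or arbitrary if there are none) isolates $0$ in $P(L)$. The only substantive point is introducing $K'$ and noting that its fundamental mesh lets us adjust any $x \in L$ modulo $K'$ without changing $P(x)$, thereby trapping short $P$-images inside a bounded subset of $L$; everything else is formal.
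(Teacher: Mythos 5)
Your proof is correct and follows essentially the same approach as the paper: use a fundamental mesh of a sublattice contained in $\ker P$ to ``fold'' any preimage in $L$ into a bounded region, then invoke discreteness of $L$ to get finiteness of short vectors in $P(L)$. The only cosmetic difference is that you work with $K' = L \cap \spn(K)$ and a variable $\epsilon$-ball, while the paper uses the given sublattice $K$ and the unit ball; both choices span $\spn(K)$ and are annihilated by $P$, so the arguments are interchangeable.
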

\begin{proof}
	Clearly $P(L)$ is a subgroup of $\spn(L)$. It remains to show that $P(L)$ is discrete.
	Let $B$ be the $1$-ball in $\spn(K)^\bot$, and let $F$ be a fundamental mesh of $K$. For each $v \in P(L)$, there is clearly a lattice point of $L$ somewhere in $v + F$. Therefore, the projection map from $L \cap (B + F)$ to $P(L) \cap B$ is surjective. However, there are only finitely many points in $L \cap (B + F)$, since it's the intersection of a discrete set with a compact set. Therefore, there are only finitely many points in $P(L) \cap B$. Since we already know $P(L)$ is a group, this last fact implies it is discrete.
\end{proof}

\begin{definition} \label{def:quotient_lattice}
	Suppose that $L$ is a lattice and $K$ is a normal sublattice of $L$. We'll denote the projection of $L$ onto $\spn(K)^\bot$ as $L/K$. We have from \ref{thm:projection_sublattice} that $L/K$ is a lattice. We'll call a lattice constructed this way a \textbf{quotient lattice}. 
\end{definition}

The next proposition is trivial and so the proof will be skipped.

\begin{proposition} \label{thm:kernels}
	Suppose $f: L \rightarrow M$ is a lattice homomorphism. Let $K$ be the group-theoretic kernel of $f$, i.e. $K = \{x \in L | f(x) = 0\}$. Then $K$ is a normal sublattice of $L$, and the inclusion map $i: K \rightarrow L$ is a kernel of $f$ in $\Lat$.
\end{proposition}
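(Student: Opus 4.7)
The plan is to handle the two claims in order: first that $K$ is a normal sublattice, then that the inclusion $i:K\to L$ enjoys the universal property of a kernel in $\Lat$. Throughout I will exploit the extension $f_\mathbb{R}:\spn(L)\to\spn(M)$ provided by Proposition \ref{thm:f_R}.

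For the first claim, the natural candidate for the ambient subspace is $V:=\ker(f_\mathbb{R})$, a genuine linear subspace of $\spn(L)$. I would verify the equality $K = L \cap V$ by a double inclusion: any $x\in K$ satisfies $f_\mathbb{R}(x)=f(x)=0$ since $f_\mathbb{R}$ extends $f$, so $x\in L\cap V$; conversely any $x\in L\cap V$ satisfies $f(x)=f_\mathbb{R}(x)=0$, so $x\in K$. This both exhibits $K$ as a normal sublattice in the sense of Definition \ref{def:sublattice} and, as a bonus, shows that $K$ really is a lattice (being the intersection of the discrete set $L$ with a closed subspace, it is automatically discrete and therefore a lattice by the second clause of Definition \ref{def:lattice}).

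For the universal property, suppose $g:N\to L$ is a lattice homomorphism with $f\circ g = 0_{NM}$. Since the zero morphism in $\Lat$ sends everything to the origin, we have $f(g(x))=0$ for every $x\in N$, so $g(x)\in K$. Thus $g$ factors through $i$ as a map of abelian groups by setting $h(x):=g(x)$ regarded as an element of $K$; uniqueness is immediate because $i$ is injective on underlying sets. The only thing to check is that $h$ is a lattice homomorphism, i.e. $\|h\|\le 1$. Here I would use that the inclusion $i$ is isometric (the sublattice $K$ inherits its inner product from $L$), so $\|h(x)\| = \|i(h(x))\| = \|g(x)\| \le \|x\|$ for every nonzero $x\in N$, giving $\|h\| \le \|g\| \le 1$.

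No step poses a real obstacle; the only subtlety worth emphasizing is the use of $f_\mathbb{R}$ to identify the group-theoretic kernel with a normal sublattice, which is what justifies cutting $L$ by a subspace rather than by an arbitrary subset. Everything else is formal, and assembling these two parts completes the proof.
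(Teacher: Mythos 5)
The paper declares this proposition trivial and omits the proof, so there is no official argument to compare against; yours is correct and is surely the one intended. Identifying $K$ with $L\cap\ker(f_\mathbb{R})$ is precisely what realizes it as a normal sublattice (and, via discreteness, as a lattice at all), and the fact that the inclusion is isometric is what makes the set-theoretic factorization $h$ of a test map $g$ into a genuine morphism of $\Lat$, since $\|h\|=\|g\|\le 1$.
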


\begin{proposition} \label{thm:cokernels}
	Suppose $f: L \rightarrow M$ is a lattice homomorphism. Let $N = M \cap \spn(f(L))$, which is clearly a normal sublattice of $M$. Then the projection $g: M \rightarrow M/N$ is a cokernel of $f$ in $\Lat$.
\end{proposition}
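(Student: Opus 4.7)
The plan is to verify the three defining properties of a cokernel in $\Lat$: (a) $g \circ f = 0$, (b) any lattice homomorphism $h \colon M \to P$ with $h \circ f = 0$ factors through $g$, and (c) the factorization is unique. For (a), since $f(L) \subseteq M$ and trivially $f(L) \subseteq \spn(f(L))$, we have $f(L) \subseteq N$; and $g$ is the orthogonal projection along $\spn(N)$, so it annihilates $N$. For (c), $g$ is surjective as a set map (by definition of $M/N$), so any factorization is forced to be unique.

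The real content is (b). Given $h \colon M \to P$ with $h \circ f = 0$, I would first apply \ref{thm:f_R} to extend $h$ to a linear map $h_\mathbb{R} \colon \spn(M) \to \spn(P)$. Since $h_\mathbb{R}$ vanishes on $f(L)$, linearity forces it to vanish on all of $\spn(f(L))$. Now $N \subseteq \spn(f(L))$, and conversely $\spn(N) \subseteq \spn(f(L))$, so in fact $\spn(N) = \spn(f(L))$ and $M \cap \spn(N) = N$. I would then define $\bar{h} \colon M/N \to P$ by $\bar{h}(g(m)) := h(m)$. Well-definedness is immediate: if $g(m_1) = g(m_2)$, then $m_1 - m_2 \in M \cap \spn(N) = N$, and $h$ vanishes on $N$ because $h_\mathbb{R}$ does. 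By construction $\bar{h} \circ g = h$.

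The step I expect to be the main obstacle is showing that $\bar h$ is actually a lattice homomorphism, i.e.\ has operator norm at most $1$. This cannot be done by naively bounding $\|h(m)\|$ in terms of $\|g(m)\|$ for lattice representatives, since lifts of $m' \in M/N$ to $M$ may be much longer than $m'$. Instead I would use \ref{thm:f_R} again: since $\spn(N) \subseteq \spn(M)$, the image $P(\spn(M))$ of the projection equals $\spn(M) \cap \spn(N)^\perp$ and is therefore a subspace of $\spn(M)$, and in particular $\spn(M/N) \subseteq \spn(M)$. For $m' \in M/N$ and any lift $m \in M$ with $g(m) = m'$, the difference $m - m'$ lies in $\spn(N)$, so $h_\mathbb{R}(m) = h_\mathbb{R}(m')$; this shows that $\bar{h}_\mathbb{R}$ is nothing but the restriction of $h_\mathbb{R}$ to $\spn(M/N)$. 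Hence $\|\bar{h}_\mathbb{R}\| \leq \|h_\mathbb{R}\| \leq 1$, and \ref{thm:f_R} converts this back to $\|\bar{h}\| \leq 1$, completing the proof.
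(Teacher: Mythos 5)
Your proof is correct and takes essentially the same approach as the paper: extend $h$ to $h_\mathbb{R}$ via \ref{thm:f_R}, observe that $h_\mathbb{R}$ vanishes on $\spn(f(L)) = \spn(N)$, and deduce that $h_\mathbb{R}$ agrees on a lattice element and on its projection under $g$, which gives $\|\bar h\| \leq 1$. The paper states this more compactly as the single inequality $\|h(x)\| \leq \|g(x)\|$ for all $x \in M$, which simultaneously yields well-definedness and the norm bound, but the underlying argument is identical.
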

\begin{proof}
	Suppose we have a lattice homomorphism $h: M \rightarrow O$ such that $h \circ f = 0$. We need to show that $h$ factors uniquely as $i \circ g$ for some homomorphism $i: M/N \rightarrow O$. The uniqueness follows directly from the surjectivity of $g$: clearly $i$ must map $g(x)$ to $h(x)$. To show that such an $i$ exists, it suffices to show that $||h(x)|| \leq ||g(x)||$ for any $x \in M$. Since $h \circ f = 0$, we clearly have that $h_\mathbb{R} \circ f_\mathbb{R} = 0$, and so $h_\mathbb{R}(x - g(x)) = 0$, and so $h(x) = h_\mathbb{R}(g(x))$. But, since $h$ is a lattice homomorphism, we have $||h_\mathbb{R}(g(x))|| \leq ||g(x)||$. Altogether, we have $||h(x)|| = ||h_\mathbb{R}(g(x))|| \leq ||g(x)||$, as required.
\end{proof}

We have from the above theorems that kernels and cokernels always exist in $\Lat$. Furthermore, since kernels and cokernels in general are uniquely determined up to unique commuting isomorphism, the above theorems give a characterization of all kernels and cokernels on $\Lat$. We also have that, whenever $K$ is a normal sublattice of $L$, there is a short exact sequence $0 \rightarrow K \rightarrow L \rightarrow L/K \rightarrow 0$. Furthermore, any short exact sequence in $\Lat$ is isomorphic to one obtained this way.

\begin{definition} \label{def:ses_ips}
	We say a sequence of inner product spaces $0 \rightarrow U \xrightarrow{f} V \xrightarrow{g} W \rightarrow 0$ is a \textbf{short exact sequence of inner product spaces} if $f^* f = \unit_U$, $gg^* = \unit_W$, and $f f^* + g^* g = \unit_V$. In other words, the sequence ``looks like'' $0 \rightarrow U \rightarrow U \oplus W \rightarrow W \rightarrow 0$, with the morphisms given by the canonical embedding and projection.
\end{definition}

\begin{proposition} \label{thm:exactness_local}
	A diagram of the form $0 \rightarrow L \xrightarrow{f} M \xrightarrow{g} N \rightarrow 0$ is a short exact sequence iff it induces a short exact sequence of the underlying abelian groups, and the sequence $0 \rightarrow \spn(L) \rightarrow \spn(M) \rightarrow \spn(N) \rightarrow 0$ is a short exact sequence of inner product spaces.
\end{proposition}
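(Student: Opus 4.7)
The plan is to verify both implications by invoking the explicit descriptions of kernels and cokernels in $\Lat$ from \ref{thm:kernels} and \ref{thm:cokernels}, together with the remark following \ref{thm:cokernels} which notes that every short exact sequence in $\Lat$ arises, up to isomorphism, from a normal sublattice $K \subseteq L$ via the sequence $0 \to K \to L \to L/K \to 0$.

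For the forward direction, I would first reduce to the normal-sublattice model by that remark. At the group level, the kernel of the projection $L \to L/K$ is $L \cap \spn(K) = K$ by normality, so exactness of abelian groups is immediate. At the span level, $\spn(L/K) = \spn(K)^\perp$ inside $\spn(L)$ because $L$ spans $\spn(L)$ and orthogonal projection is linear; the inclusion $\spn(K) \hookrightarrow \spn(L)$ is an isometric embedding and $\spn(L) \to \spn(K)^\perp$ is the orthogonal projection, so the three identities in \ref{def:ses_ips} are just the familiar orthogonal-decomposition identities.

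For the reverse direction, my first task is to recover that $f$ is a kernel of $g$. The identity $f_\mathbb{R}^* f_\mathbb{R} = \unit$ gives that $f_\mathbb{R}$ is an isometric embedding. Combining $f_\mathbb{R}^* f_\mathbb{R} = \unit$ with $f_\mathbb{R} f_\mathbb{R}^* + g_\mathbb{R}^* g_\mathbb{R} = \unit$ yields $(g_\mathbb{R} f_\mathbb{R})^*(g_\mathbb{R} f_\mathbb{R}) = 0$, hence $g_\mathbb{R} f_\mathbb{R} = 0$, and the same identity shows that every vector in $\ker(g_\mathbb{R})$ equals $f_\mathbb{R} f_\mathbb{R}^*$ of itself, so $\ker(g_\mathbb{R}) = \mathrm{im}(f_\mathbb{R}) = \spn(f(L))$. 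Combined with group-exactness, this gives $f(L) = \ker_{\mathrm{grp}}(g) = M \cap \spn(f(L))$, so $f(L)$ is a normal sublattice of $M$. By \ref{thm:kernels} its inclusion is a kernel of $g$, and since $f$ factors through this inclusion via the isometric group isomorphism $L \xrightarrow{\sim} f(L)$, $f$ itself is a kernel of $g$.

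For the cokernel condition, I would compare $g$ with the canonical cokernel $P: M \to M/f(L)$ from \ref{thm:cokernels}. The assignment $\phi: M/f(L) \to N$, $P(m) \mapsto g(m)$, is well-defined because $P(m) = 0$ forces $m \in M \cap \spn(f(L)) = f(L)$ and hence $g(m) = 0$; it is a group isomorphism by group-exactness and group-surjectivity of $g$. To upgrade it to a lattice isomorphism, I would observe that $\phi$ extends $\mathbb{R}$-linearly to $\phi_\mathbb{R}: \spn(f(L))^\perp \to \spn(N)$ and that this extension equals $g_\mathbb{R}|_{\spn(f(L))^\perp}$; using $g_\mathbb{R} g_\mathbb{R}^* = \unit$ together with $\spn(f(L))^\perp = \mathrm{im}(g_\mathbb{R}^*)$, this restriction is an isometric isomorphism. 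Hence $\phi$ is an isomorphism in $\Lat$ and $g = \phi \circ P$ is a cokernel of $f$. The main obstacle is precisely this isometry check, since it is the one place where all three identities of \ref{def:ses_ips} genuinely interact; everything else in the reverse direction reduces to routine manipulation once $f(L) = M \cap \spn(f(L))$ has been established.
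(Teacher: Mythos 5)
Your proof is correct and follows essentially the same route as the paper's: reduce the forward direction to the explicit descriptions in \ref{thm:kernels} and \ref{thm:cokernels}, and in the reverse direction exhibit commuting isomorphisms between the given sequence and the canonical sequence $0 \to \Ker(g) \to M \to M/\Ker(g) \to 0$, using $f, f^*$ on the left and $g, g^*$ on the right. The paper states this in two sentences without carrying out the verifications; you fill in exactly those details (identifying $\ker(g_\mathbb{R}) = \mathrm{im}(f_\mathbb{R})$, normality of $f(L)$, and the isometry $\mathrm{im}(g_\mathbb{R}^*) = \spn(f(L))^\perp$), which is the intended argument.
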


\begin{proof}
	If $0 \rightarrow L \rightarrow M \rightarrow N \rightarrow 0$ is a short exact sequence of lattices, then we clearly have the latter properties from our characterization of kernels and cokernels above.
	
	Conversely, suppose the $0 \rightarrow L \rightarrow M \rightarrow N \rightarrow 0$ satisfies the latter properties. We know  $0 \rightarrow \Ker(g) \rightarrow M \rightarrow M/\Ker(g) \rightarrow 0$ is a short exact sequence, so the same must be true of $0 \rightarrow L \rightarrow M \rightarrow N \rightarrow 0$ if we can find appropriately commuting isomorphisms between these two sequences. The required isomorphisms between $L$ and $\Ker(g)$ are provided by $f$ and $f^*$; the required isomorphisms between $N$ and $M/\Ker(g)$ are provided by $g^*$ and $g$. This completes the proof.
\end{proof}

\begin{remark}
	The above can be thought of as a local condition for exactness: inducing a short exact sequence of the underlying abelian groups is like being exact at each finite prime, and inducing a short exact sequence of the underlying inner product spaces is like being exact at the prime at infinity.
\end{remark}

\begin{definition} \label{def:tensor_objects}
	Suppose $V$ and $W$ are inner product spaces. We define the tensor product $V \otimes W$ to be the usual tensor product of vector spaces, equipped with the inner product $\langle v_1 \otimes w_1 , v_2 \otimes w_2 \rangle = \langle v_1 , v_2 \rangle \langle w_1 , w_2 \rangle$. 
	
	Suppose $L$ and $M$ are lattices. We define $L \otimes M$ to be the subgroup of the inner product space $\spn(L) \otimes \spn(M)$ generated by tensors of the form $x \otimes y$, with $x \in L$ and $y \in M$. This subgroup must be a lattice, for if we have bases $B_L$ and $B_M$ for $L$ and $M$, then $L \otimes M$ is clearly generated by the linearly independent set $\{ x \otimes y | x \in B_L, y \in B_M \}$.
\end{definition}

\begin{definition} \label{def:tensor_morphisms}
	Whenever we have a linear map between inner product spaces, say $f: V \rightarrow W$, and $U$ is any inner product space, we define ${U \otimes f : U \otimes V \rightarrow U \otimes W}$ to be the linear map taking $u \otimes v$ to $u \otimes f(v)$. We define $f \otimes U$ similarly. When we have lattices $L$, $M$, and $N$, and a homomorphism $g: L \rightarrow M$, we'll assume the obvious analogous definition for $N \otimes g$ and $g \otimes N$.
\end{definition}

\begin{proposition} \label{thm:tensor_adjoint}
	Suppose $f: V \rightarrow W$ is a linear map between inner product spaces, and $U$ is any inner product space. Then $(U \otimes f)^* = U \otimes f^*$, and $(f \otimes U)^* = f^* \otimes U$.
\end{proposition}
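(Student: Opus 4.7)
The approach is to verify the defining property of the adjoint directly on elementary tensors and then extend by bilinearity. Recall that the adjoint of a linear map $T : X \to Y$ between inner product spaces is the unique linear map $T^* : Y \to X$ satisfying $\langle Tx, y \rangle = \langle x, T^* y \rangle$ for all $x \in X$ and $y \in Y$. So it suffices to exhibit a candidate map that satisfies this identity; uniqueness then finishes the job.

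The plan is to take the candidate $U \otimes f^*$ and check the adjoint relation on a pair of elementary tensors. By the defining formula for the inner product on a tensor product (Definition \ref{def:tensor_objects}) and the fact that $f^*$ is the adjoint of $f$, I would compute
\[
\langle (U \otimes f)(u_1 \otimes v), \, u_2 \otimes w \rangle = \langle u_1 \otimes f(v), \, u_2 \otimes w \rangle = \langle u_1, u_2 \rangle \langle f(v), w \rangle = \langle u_1, u_2 \rangle \langle v, f^*(w) \rangle,
\]
and then recognize the right-hand side as $\langle u_1 \otimes v, \, u_2 \otimes f^*(w) \rangle = \langle u_1 \otimes v, \, (U \otimes f^*)(u_2 \otimes w) \rangle$. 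Since both $U \otimes f$ and $U \otimes f^*$ are linear, and since elementary tensors span $U \otimes V$ and $U \otimes W$, the bilinearity of the inner product lets us extend the identity to all pairs of elements. Uniqueness of the adjoint then forces $(U \otimes f)^* = U \otimes f^*$.

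The second identity $(f \otimes U)^* = f^* \otimes U$ is handled by an identical argument, swapping the roles of the two tensor factors. There is no real obstacle here; the only mild subtlety is confirming that $U \otimes f^*$ is well-defined as a linear map (which is immediate from Definition \ref{def:tensor_morphisms} applied to $f^*$), and that the bilinearity extension is legitimate, which follows from both sides being bilinear forms agreeing on a spanning set of pure tensors. Thus the proof reduces to a two-line calculation on elementary tensors plus a standard density/linearity remark.
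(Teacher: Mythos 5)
Your proposal is correct and takes essentially the same approach as the paper: verify the adjoint identity on elementary tensors via the defining formula for the tensor-product inner product, extend by bilinearity, invoke uniqueness of the adjoint, and handle the second identity by symmetry.
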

\begin{proof}
	We have:
	$$\langle (U \otimes f)(u_1 \otimes v) , u_2 \otimes w \rangle =
	\langle u_1 \otimes f(v) , u_2 \otimes w \rangle $$
	$$= \langle u_1 , u_2 \rangle \langle f(v), w \rangle $$
	$$= \langle u_1 , u_2 \rangle \langle v, f^*(w) \rangle $$
	$$ = \langle u_1 \otimes v , u_2 \otimes f^*(w) \rangle $$
	$$= \langle u_1 \otimes v, (U \otimes f^*)(u_2 \otimes w) \rangle $$
	So, we have from bilinearity that $\langle (U \otimes f)(x), y \rangle = \langle x, (U \otimes f^*)(y) \rangle$ for all $x \in U \otimes V$ and $y$ in $U \otimes W$. This proves that $(U \otimes f)^* = U \otimes f^*$. By symmetry, we have a similar proof that $(f \otimes U)^* = f^* \otimes U$.
\end{proof}

\begin{proposition} \label{thm:tensor_exact}
	Suppose $0 \rightarrow K \rightarrow L \rightarrow M \rightarrow 0$ is a short exact sequence of lattices, and $N$ is any lattice. Then the following are both short exact sequences: 
	$$0 \rightarrow K \otimes N \rightarrow L \otimes N \rightarrow M \otimes N  \rightarrow 0$$
	$$0 \rightarrow N \otimes K \rightarrow N \otimes L \rightarrow N \otimes M \rightarrow 0$$
\end{proposition}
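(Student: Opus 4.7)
My plan is to invoke \ref{thm:exactness_local}, which reduces the claim to verifying two local conditions: exactness of the underlying abelian group sequence, and the span sequence being a short exact sequence of inner product spaces (in the sense of \ref{def:ses_ips}). By symmetry of the tensor product construction (the swap $x \otimes y \mapsto y \otimes x$ gives a canonical isometric isomorphism $L \otimes N \simeq N \otimes L$ natural in both arguments), it suffices to handle the first sequence.

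For the span condition, I would use that $\spn(L \otimes N) = \spn(L) \otimes \spn(N)$ and that the induced maps on spans are $f_\mathbb{R} \otimes \spn(N)$ and $g_\mathbb{R} \otimes \spn(N)$. Applying \ref{thm:exactness_local} to the hypothesis gives the three identities $f_\mathbb{R}^* f_\mathbb{R} = \unit$, $g_\mathbb{R} g_\mathbb{R}^* = \unit$, and $f_\mathbb{R} f_\mathbb{R}^* + g_\mathbb{R}^* g_\mathbb{R} = \unit$. Then \ref{thm:tensor_adjoint} together with the functoriality of $- \otimes \spn(N)$ transfers these identities to the tensored maps, for example
\[
(f_\mathbb{R} \otimes \spn(N))^* (f_\mathbb{R} \otimes \spn(N)) = (f_\mathbb{R}^* f_\mathbb{R}) \otimes \spn(N) = \unit.
\]
These same identities simultaneously confirm that $f \otimes N$ and $g \otimes N$ are lattice morphisms: the first is an isometric embedding, and the second has $\|(g_\mathbb{R} \otimes \spn(N))(x)\|^2 = \|x\|^2 - \|(f_\mathbb{R}^* \otimes \spn(N))(x)\|^2 \leq \|x\|^2$, so its operator norm is at most $1$.

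For the abelian group condition, I would identify $L \otimes N$ (as defined in \ref{def:tensor_objects}) with the usual $\mathbb{Z}$-module tensor product $L \otimes_{\mathbb{Z}} N$: fixing lattice bases $\{e_i\}$ of $L$ and $\{f_j\}$ of $N$, both groups are free abelian on the symbols $\{e_i \otimes f_j\}$. Under this identification, the tensored sequence becomes $0 \to K \otimes_{\mathbb{Z}} N \to L \otimes_{\mathbb{Z}} N \to M \otimes_{\mathbb{Z}} N \to 0$, which is exact because $N$ is a free, hence flat, $\mathbb{Z}$-module. Combining the two local verifications via \ref{thm:exactness_local} concludes the proof. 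I do not anticipate a serious obstacle; the only mildly delicate point is the identification of the paper's bespoke tensor product with the ordinary $\mathbb{Z}$-linear one, but this follows immediately from the basis description.
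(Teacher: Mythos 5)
Your proposal is correct and follows essentially the same route as the paper: both reduce to the two local conditions of \ref{thm:exactness_local}, settle the abelian-group condition by noting that tensoring with a free abelian group is exact, and settle the inner-product-space condition via \ref{thm:tensor_adjoint}. Your version simply spells out a few details the paper leaves implicit (the basis identification with $\otimes_{\mathbb{Z}}$, the verification that the tensored maps are contractions, and the explicit swap isomorphism for the symmetry step).
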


\begin{proof}
	Our strategy will be to show that $0 \rightarrow K \otimes N \rightarrow L \otimes N \rightarrow M \otimes N  \rightarrow 0$ satisfies both conditions of \ref{thm:exactness_local}. Clearly, tensoring with a free abelian group preserves short exact sequences of abelian groups. This gives the first condition. The second condition follows from \ref{thm:tensor_adjoint}. Therefore, \ref{thm:exactness_local} implies that the first sequence is short exact. Clearly, by symmetry, a similar argument shows that the second sequence is short exact as well.
\end{proof}

\begin{definition} \label{def:volume}
	Suppose $L$ is a lattice with basis $x_1 \ldots x_d$. Let $e_1 \ldots e_d$ be an orthonormal basis of $\spn(L)$. Let $T$ be the unique operator on $\spn(L)$ satisfying $T(e_i) = x_i$ for $1 \leq i \leq d$. We define the \textbf{volume} of $L$, $\vol(L)$, to be $|\det(T)|$. It can be shown that this quantity is independent of our choice of $x_i$ and $e_i$ \cite[Chapter 1, Part 4]{Neukirch}. Furthermore, two isomorphic lattices clearly have the same volume. 
\end{definition}

\begin{proposition} \label{thm:exact_volume}
	Suppose $0 \rightarrow L \xrightarrow{f} M \xrightarrow{g} N \rightarrow 0$ is a short exact sequence of lattices. Then $\vol(M) = \vol(L) \cdot \vol(N)$.
\end{proposition}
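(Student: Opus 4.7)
The plan is to reduce the statement to a block-triangular determinant computation via Proposition \ref{thm:exactness_local}. First I would invoke that proposition to obtain $f^*f = \unit_L$, $gg^* = \unit_N$, and $ff^* + g^*g = \unit_M$, and to use that the sequence of underlying abelian groups is short exact. Since $g: M \to N$ is surjective on abelian groups, pick any basis $z_1, \ldots, z_c$ of $N$ and lift it to elements $y_1, \ldots, y_c \in M$ with $g(y_j) = z_j$. For any basis $x_1, \ldots, x_a$ of $L$, the collection $f(x_1), \ldots, f(x_a), y_1, \ldots, y_c$ is then a basis of $M$: exactness of the abelian group sequence gives the splitting as a direct sum, and linear independence in $\spn(M)$ follows by applying $g$ (to isolate the $y_j$ coefficients) and then using injectivity of $f$ on $\spn(L)$ (guaranteed by $f^*f = \unit_L$).

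Next I would build a compatible orthonormal basis of $\spn(M)$. Take orthonormal bases $e_1, \ldots, e_a$ of $\spn(L)$ and $e'_1, \ldots, e'_c$ of $\spn(N)$. The vectors $f(e_1), \ldots, f(e_a), g^*(e'_1), \ldots, g^*(e'_c)$ form an orthonormal basis of $\spn(M)$: orthonormality of the two halves uses $f^*f = \unit_L$ and $gg^* = \unit_N$, cross-orthogonality uses $gf = 0$, and spanning uses the decomposition $m = ff^*(m) + g^*g(m)$ from $ff^* + g^*g = \unit_M$.

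Now I would write down the operator $T_M$ sending this orthonormal basis to the lattice basis, i.e.\ $T_M(f(e_i)) = f(x_i)$ and $T_M(g^*(e'_j)) = y_j$. Writing $x_i = \sum_k (T_L)_{ki} e_k$ gives $f(x_i) = \sum_k (T_L)_{ki} f(e_k)$, so the top-left block is exactly the matrix $T_L$ defining $\vol(L)$. For $y_j$, decompose $y_j = ff^*(y_j) + g^*g(y_j) = ff^*(y_j) + g^*(z_j)$; writing $z_j = \sum_k (T_N)_{kj} e'_k$ shows the bottom-right block is $T_N$, while the $ff^*(y_j)$ part contributes only to the upper-right off-diagonal block. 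Thus $T_M$ has block upper-triangular matrix $\bigl(\begin{smallmatrix} T_L & * \\ 0 & T_N \end{smallmatrix}\bigr)$ in this orthonormal basis, so $|\det(T_M)| = |\det(T_L)| \cdot |\det(T_N)|$, which is exactly $\vol(M) = \vol(L)\cdot\vol(N)$.

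The computation is essentially routine once the setup is right; the one place to be careful is the choice of lifts $y_j$ and verifying that they, together with the $f(x_i)$, really do form a lattice basis of $M$. The main conceptual step is recognizing that Proposition \ref{thm:exactness_local} gives precisely the two ingredients needed in parallel — a lift splitting at the level of abelian groups, and an orthogonal decomposition at the level of inner product spaces — which together force the change-of-basis matrix into block-triangular form.
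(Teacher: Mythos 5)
Your proof is correct and follows essentially the same route as the paper: both reduce $\vol(M)$ to a block upper-triangular determinant by combining a lattice basis of $L$ with lifts of a lattice basis of $N$, and an orthonormal basis of $\spn(M)$ built from orthonormal bases of $\spn(L)$ and $\spn(N)$. The only cosmetic difference is that the paper first reduces WLOG (via \ref{thm:kernels} and \ref{thm:cokernels}) to the case where $L$ is a normal sublattice and $N = M/L$, then invokes the restriction/quotient operator formulation of the block-triangular determinant fact, whereas you keep $f$, $g$ abstract and use the adjoint identities from \ref{thm:exactness_local} to do the same computation explicitly.
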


\begin{proof}
	Since isomorphic lattices have the same volume, and two kernels of a given morphism are isomorphic, we can assume without loss of generality that $L$ is a normal sublattice of $M$, with $f$ the inclusion morphism, by \ref{thm:kernels}. Similarly, we can assume without loss of generality that $N = M/L$, with $g$ the orthogonal projection, by \ref{thm:cokernels}. Let $x_1 \ldots x_c$ be a basis of $L$, and $y_1 \ldots y_d$ be a basis of $N$. Choose $x_{c+1} \ldots x_{c+d}$ in $M$ so that $g(x_{c+i}) = y_i$. Then $x_1 \ldots x_{c + d}$ is a basis for $M$. Let $e_1 \ldots e_c$ be an orthonormal basis for $\spn(L)$, and $e_{c+1} \ldots e_{c+d}$ be an orthonormal basis for $\spn(N) = \spn(L)^\bot$. Then $e_1 \ldots e_{c+d}$ is an orthonormal basis for $\spn(M)$. Let $T$ be the unique linear operator on $\spn(M)$ satisfying $T(e_i) = x_i$ for $1 \leq i \leq c + d$. We clearly have that $\spn(L)$ is a $T$-invariant subspace of $\spn(M)$, so $T$ induces restriction and quotient operators on $\spn(L)$ and $\spn(M) / \spn(L)$. The latter then induces an operator on $\spn(N)$, since we have a canonical isomorphism between $\spn(N) = \spn(L)^{\perp}$ and $\spn(M) / \spn(L)$. We'll denote these operators on $\spn(L)$ and $\spn(N)$ as $T_L$ and $T_N$. We then have $\det(T) = \det(T_L) \cdot \det(T_N)$. Taking the absolute value of both sides yields the result.
\end{proof}

\section{The Canonical Rank on $\Lat$}

\begin{definition}
Given an element $x$ of an inner product space, let $\rho(x)$ denote $e^{-\pi||x||^2}$. Given a subset $S$ of an inner product space, let $\rho(S)$ denote $\sum_{x \in S}{\rho(x)}$.
\end{definition}

Suppose $f: \mathcal{F} \rightarrow \mathcal{G}$ is a morphism between coherent sheaves on a projective scheme over a field, and $\rk(f)$ is the rank of the corresponding linear map between global sections. Then we clearly have $\rk(f) = \ell(\mathcal{F}) - \ell(\Ker(f))$, from the rank-nullity theorem and the fact that the global sections functor preserves kernels.

Suppose $L$ is a lattice. Following \cite{VDGS}, we think of $L$ as like a sheaf on the ``completion'' of $\Spec(Z)$, and think of $\ell(L) := \ln(\rho(L))$ as like the dimension of the global sections of $L$. Then, given a morphism of lattices, say $f: L \rightarrow M$, the previous paragraph suggests that we think of $\ell(L) - \ell(\Ker(f))$ as like the rank of the induced map between global sections. We therefore define $\rk(f)$ as $\ell(L) - \ell(\Ker(f))$. This gives us a function $\rk$ from the morphisms of $\Lat$ to the non-negative real numbers. The purpose of this section is to show that $\rk$ is a left-exact rank, as we should expect from the analogies discussed so far. This makes $\Lat$ a left-exact ranked category with kernels. We'll show in subsequent sections, using the numerical zig-zag lemma, that cohomology exists on $\Lat$ (in the sense of \ref{def:coh_exists2}) and agrees with the definition given in \cite{VDGS}.

It follows directly from the above definition of $\rk$ that if it is a rank, it must be left-exact. Also, if we have homomorphisms $f: L \rightarrow M$ and $g: M \rightarrow N$, we must have that $\rk(g \circ f) \leq \rk(f)$, since $\Ker(g \circ f)$ contains $\Ker(f)$. It remains to show that $\rk(g \circ f) \leq \rk(g)$. This hinges upon the following recent result of Oded Regev and Noah Stephens-Davidowitz:

\begin{proposition} \label{thm:rho_inequality}
	If $L$ is a lattice with sublattices $M$ and $N$, we have
	$$\frac{\rho(M)}{\rho(M \cap N)} \leq \frac{\rho(L)}{\rho(N)}$$
\end{proposition}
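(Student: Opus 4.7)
The plan is to appeal directly to the main result of Regev and Stephens-Davidowitz in \cite{RSD}. Multiplying both sides of the claim by $\rho(N)\rho(M \cap N)$, the statement is equivalent to
$$\rho(M)\rho(N) \leq \rho(L)\rho(M \cap N),$$
which is exactly the Gaussian-mass inequality for two sublattices of a common ambient lattice established in \cite{RSD}. Hence the proof on our end reduces to matching notation and verifying that our hypotheses (that $M$ and $N$ are both sublattices of $L$, so in particular embedded isometrically in the same inner product space) coincide with theirs; no new argument is required.

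For context on why the underlying inequality is nontrivial, the ratio $\rho(M)/\rho(M \cap N)$ does not admit any clean interpretation as a classical lattice invariant such as volume, covolume, discriminant, or successive minima, so the standard tools of the geometry of numbers are not directly applicable. The strategy of \cite{RSD} is instead analytic, exploiting the fact that the Gaussian is its own Fourier transform: via Poisson summation one converts each of the four Gaussian sums into sums over the corresponding dual lattices, where the combinatorial interaction between $M$, $N$, and $M \cap N$ becomes an algebraic interaction between their duals, and the inequality can then be recast as a pointwise or integral statement about Gaussian densities on affine subspaces.

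The main obstacle, already handled in \cite{RSD} and which I would not attempt to reproduce, is precisely this final analytic step: even after the Fourier-theoretic reformulation the inequality does not follow by termwise comparison, and one requires a careful convexity and positivity argument tailored to the Gaussian. We therefore treat \ref{thm:rho_inequality} as a black box. Its role in the remainder of the section is to supply exactly the missing input needed to verify that the function $\rk$ defined on morphisms of $\Lat$ satisfies $\rk(g \circ f) \leq \rk(g)$, completing the proof that $\rk$ is a rank in the sense of \ref{def:ranked_cat}.
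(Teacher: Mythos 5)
Your proof takes essentially the same route as the paper: both reduce the claim to the rearranged form $\rho(M)\rho(N) \leq \rho(L)\rho(M \cap N)$ and then defer entirely to \cite{RSD} (Theorem 5.1 there) as a black box. The paper's proof is a one-sentence citation; your added commentary on the Poisson-summation and convexity strategy of \cite{RSD} is accurate context but not part of the argument, and the only caveat worth keeping in mind is that \cite{RSD} states the inequality with $M+N$ in place of $L$, so one also uses the trivial monotonicity $\rho(M+N) \leq \rho(L)$, which is presumably why the paper hedges with ``nearly identical equivalent statement.''
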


\begin{proof}
	A nearly identical equivalent statement is proven as Theorem 5.1 of \cite{RSD}.
\end{proof}

\begin{definition} \label{def:strictly_contracting}
	We'll say a lattice homomorphism $f$ is \textbf{strictly contracting} if $||f|| < 1$, or equivalently, if $||f_\mathbb{R}|| < 1$ (this notation is explained in \ref{thm:f_R}).
\end{definition}

We'll show that $\rk$ obeys the second rank axiom by first proving it for strictly contracting homomorphisms.

\begin{lemma} \label{thm:lattice_rk_lemma}
	Suppose we have lattice homomorphisms $f: L \rightarrow M$ and $g: M \rightarrow N$. Suppose furthermore that $f$ is strictly contracting. Then $\rk(g \circ f) \leq \rk(g)$.
\end{lemma}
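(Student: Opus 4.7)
The plan is to recast the desired inequality as a Gaussian-sum inequality and apply the Regev--Stephens-Davidowitz inequality \ref{thm:rho_inequality} in a carefully chosen ambient lattice. Unfolding the definition of $\rk$, the claim $\rk(g\circ f) \leq \rk(g)$ is equivalent to
$$
\rho(L)\cdot\rho(\Ker(g)) \leq \rho(M)\cdot\rho(\Ker(g\circ f)).
$$

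The heart of the argument is an isometric embedding $\tilde f\colon L \to M \oplus L'$ built from $f$, which is where strict contraction enters essentially. Since $||f_\mathbb{R}|| < 1$ and $\spn(L)$ is finite-dimensional, the self-adjoint operator $\unit - f_\mathbb{R}^* f_\mathbb{R}$ on $\spn(L)$ is positive definite, so it admits a positive, invertible square root $S$. Setting $L' := S(L)$ produces a full-rank lattice in $\spn(L)$ (the image of a discrete set under a linear bijection), and the rule $\tilde f(x) := (f(x), S(x))$ defines a homomorphism $L \to M \oplus L'$. The identity $f_\mathbb{R}^*f_\mathbb{R} + S^*S = \unit$ immediately gives $||\tilde f(x)||^2 = ||f(x)||^2 + ||Sx||^2 = ||x||^2$, so $\tilde f$ is isometric.

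I would then apply \ref{thm:rho_inequality} to the ambient lattice $\Lambda = M \oplus L'$ with sublattices $A := \tilde f(L)$ and $B := \Ker(g) \oplus L'$. A direct check gives $A \cap B = \tilde f(\Ker(g\circ f))$, since membership in $B$ forces the $M$-coordinate of $\tilde f(x)$ to lie in $\Ker(g)$, and the $L'$-coordinate condition is automatic. Using the isometry of $\tilde f$ (so that $\rho(A) = \rho(L)$ and $\rho(A \cap B) = \rho(\Ker(g\circ f))$) together with the factorization $\rho(X \oplus Y) = \rho(X)\cdot\rho(Y)$ over orthogonal direct sums, the RSD inequality becomes
$$
\rho(L)\cdot \rho(\Ker(g))\cdot\rho(L') \leq \rho(M)\cdot\rho(L')\cdot\rho(\Ker(g\circ f)).
$$
Cancelling $\rho(L') > 0$ and taking logarithms yields precisely $\rk(g\circ f) \leq \rk(g)$.

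The main obstacle is the construction of $L'$: one needs $S$ to be invertible so that $S(L)$ is a discrete subgroup of $\spn(L)$, and hence a genuine lattice. This is exactly where strict contraction is indispensable---without it, $\unit - f_\mathbb{R}^* f_\mathbb{R}$ would only be positive semidefinite, $S$ could have a nontrivial kernel, and $S(L)$ need not be discrete, so neither the cancellation of $\rho(L')$ nor the direct-sum factorization on which the argument rests would be available.
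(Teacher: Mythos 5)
Your proof is correct and follows essentially the same route as the paper: build an auxiliary lattice $L'$ from the positive-definite form $\unit - f_\mathbb{R}^*f_\mathbb{R}$, embed $L$ isometrically into $M \oplus L'$ via the graph of $f$, and apply the Regev--Stephens-Davidowitz inequality with the sublattices $\tilde f(L)$ and $\Ker(g) \oplus L'$. The only cosmetic difference is that the paper realizes $L'$ as $L$ with a re-metrized inner product, whereas you realize it as the image $S(L)$ under the positive square root $S$; these are the same lattice up to isometry, and the rest of the argument is identical.
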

\begin{proof}
	Let V be the vector space $\spn(L)$. In addition to the original inner product on $V$, we have a second symmetric bilinear form defined as $\langle \langle x, y \rangle \rangle := \langle x, y \rangle - \langle f_\mathbb{R}(x), f_\mathbb{R}(y) \rangle$. Since $f$ is strictly contracting, we also have that this form is positive-definite, and is therefore an inner product. Let $L'$ be the lattice consisting of the same underlying subgroup of the same vector space as $L$, but with this new inner product. Let $O$ be the orthogonal direct sum $L' \oplus M$. Let $P$ denote the sublattice of $O$ corresponding to the graph of $f$. It's easy to verify that $P$ is isometrically isomorphic to $L$. Let $K$ be the subspace of $O$ consisting of points whose right coordinate lies in $\Ker(g)$. Applying \ref{thm:rho_inequality}, we have
	$$\frac{\rho(P)}{\rho(P \cap K)} \leq \frac{\rho(O)}{\rho(K)}.$$
	Equivalently,
	$$\frac{\rho(L)}{\rho(\Ker(g \circ f))} \leq \frac{\rho(L')\rho(M)}{\rho(L')\rho(\Ker(g))}$$
	
	The result then follows from simplifying and applying $\ln$ to both sides. 
\end{proof}

We can now finish the proof that $\rk$ is a left-exact rank.

\begin{theorem} \label{thm:lattice_rank_compose}
	Suppose we have lattice homomorphisms $f: L \rightarrow M$ and $g: M \rightarrow N$. Then $\rk(g \circ f) \leq \rk(g)$.
\end{theorem}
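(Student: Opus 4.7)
The plan is to reduce the general case to the strictly contracting one already handled in Lemma \ref{thm:lattice_rk_lemma}, by rescaling the inner product on the domain. For each $\lambda \in (0, 1)$, I would introduce a lattice $L_\lambda$ with the same underlying abelian group as $L$ (inside the same ambient vector space) but with inner product scaled by the factor $\lambda^{-2}$, so that $||x||_{L_\lambda} = \lambda^{-1} ||x||_L$. Viewed as a morphism $f : L_\lambda \to M$, the operator norm of $f$ becomes $\lambda \cdot ||f|| \leq \lambda < 1$, which makes $f$ strictly contracting, while $g : M \to N$ is untouched and so still a lattice homomorphism. Lemma \ref{thm:lattice_rk_lemma} then applies and gives
\[
\rk(g \circ f : L_\lambda \to N) \leq \rk(g : M \to N),
\]
and crucially the right-hand side is independent of $\lambda$ because the rescaling did not affect $M$ or $N$.

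The next step is to take $\lambda \to 1^{-}$ in the left-hand side. Expanding the definition of $\rk$, one has $\rk(g \circ f : L_\lambda \to N) = \ln \rho(L_\lambda) - \ln \rho(K_\lambda)$, where $K := \Ker(g \circ f)$ is a sublattice of $L$ and $K_\lambda$ carries the restricted rescaled inner product. Both of these $\rho$-values are sums of the form $\sum_x e^{-\pi \lambda^{-2} ||x||^2}$. For each fixed $x$, the summand $e^{-\pi \lambda^{-2} ||x||^2}$ increases monotonically to $e^{-\pi ||x||^2}$ as $\lambda \to 1^{-}$, and it is dominated by $e^{-\pi ||x||^2}$ throughout (the limit being a summable function over any lattice, since any lattice has finite Gaussian sum). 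Monotone (or dominated) convergence therefore yields $\rho(L_\lambda) \to \rho(L)$ and $\rho(K_\lambda) \to \rho(K)$, and passing to logarithms gives $\rk(g \circ f : L_\lambda \to N) \to \ell(L) - \ell(\Ker(g \circ f)) = \rk(g \circ f)$ in the original setting. Combining with the displayed inequality yields $\rk(g \circ f) \leq \rk(g)$.

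I do not expect a genuine obstacle here; the argument is essentially a one-parameter limit sandwiched around Lemma \ref{thm:lattice_rk_lemma}. The only care required is bookkeeping: one must verify that the rescaling touches only the inner product on $L$ (so the right-hand side stays literally equal to $\rk(g)$), and that the definition of $\rk$ via $\rho$ is manifestly continuous in the inner product. Everything else, including the monotone limit, is routine.
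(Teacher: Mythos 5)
Your proposal is correct and takes essentially the same approach as the paper: both rescale the domain lattice (the paper via $L_\epsilon = \{(1+\epsilon)x : x \in L\}$, you via rescaling the inner product, an isometrically isomorphic construction with $1+\epsilon = \lambda^{-1}$) to reduce to the strictly contracting case of Lemma \ref{thm:lattice_rk_lemma}, then pass to the limit. The paper dismisses the limit step as ``from continuity,'' whereas you spell out the monotone/dominated convergence argument, which is a welcome addition of detail but not a different route.
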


\begin{proof}
	For any positive real number $\epsilon$, let $L_\epsilon$ be the lattice $\{(1 + \epsilon)x | x \in L\}$. Dividing by $1 + \epsilon$ gives a lattice homomorphism from $L_\epsilon$ to $L$, and composing with $f$ gives a lattice homomorphism $f_\epsilon: L_\epsilon \rightarrow M$. Clearly $f_\epsilon$ is strictly contracting, and so we have from \ref{thm:lattice_rk_lemma} that $\rk(g \circ f_\epsilon) \leq \rk(g)$. Since this holds for all $\epsilon > 0$, we have from continuity that $\rk(g \circ f) \leq \rk(g)$.
\end{proof}

\begin{corollary} \label{thm:lattice_rank_complete}
	$\rk$ is a left-exact rank.
\end{corollary}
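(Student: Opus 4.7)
The plan is to verify the three defining properties of a left-exact rank in turn: that $\rk$ takes values in the nonnegative reals, that it satisfies the two rank axioms of \ref{def:ranked_cat}, and that it satisfies the left-exactness identity $\rk(f) = \ell(A) - \nul(f)$ from \ref{def:left_exact}. Left-exactness is immediate: by construction $\rk(f) = \ell(L) - \ell(\Ker(f))$ for every lattice homomorphism $f: L \to M$, and $\ell(\Ker(f)) = \nul(f)$ by \ref{def:nullity}. So once the rank axioms are in place, left-exactness comes for free.

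Both nonnegativity and the first rank axiom rest on a single elementary fact about the Gaussian sum: if $K \subseteq K'$ are sublattices of an inner product space, then $\rho(K) \leq \rho(K')$, simply because every summand $\rho(x) = e^{-\pi\|x\|^2}$ is positive, so taking logarithms gives $\ell(K) \leq \ell(K')$. For nonnegativity, apply this to $\Ker(f) \subseteq L$ to get $\nul(f) \leq \ell(L)$, hence $\rk(f) \geq 0$. For the first axiom, if $f : L \to M$ and $g : M \to N$ then $\Ker(f) \subseteq \Ker(g \circ f)$ as subgroups of $L$, so $\nul(f) \leq \nul(g \circ f)$, and subtracting from $\ell(L)$ gives $\rk(g \circ f) \leq \rk(f)$.

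The substantive remaining step is the second rank axiom $\rk(g \circ f) \leq \rk(g)$, which is the main obstacle and the only nontrivial input. This is exactly \ref{thm:lattice_rank_compose}, whose proof in turn reduced (via a strictly-contracting perturbation and a graph construction in an orthogonal direct sum) to the Regev--Stephens-Davidowitz inequality \ref{thm:rho_inequality}. Combining these three verified properties yields the corollary.
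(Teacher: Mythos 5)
Your proposal is correct and follows essentially the same route as the paper: left-exactness is definitional, the first rank axiom (and nonnegativity) follow from $\Ker(f) \subseteq \Ker(g \circ f) \subseteq L$ together with the monotonicity of $\rho$ under inclusion of sublattices, and the second rank axiom is exactly Theorem \ref{thm:lattice_rank_compose}, proved via Regev--Stephens-Davidowitz. The paper states this decomposition in the paragraph preceding \ref{thm:rho_inequality} and treats the corollary as the immediate wrap-up of the section.
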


\section{Tight Lattices}

In this section, we'll define tight lattices and establish some properties about them that we'll need later. Tightness is a sort of positivity condition for lattices. We'll see shortly that it's stable under tensor products. We'll also see, once we have defined the cohomology of lattices, that tight lattices have small $h^1$. In the next section, we'll show that cohomology exists on $\Lat$ by defining the numerical cohomology of a lattice in terms of the numerical cohomology of a tight resolution. This is motivated by the fact that, in general, cohomology can be computed with acyclic resolutions.

\begin{definition}
	We say a lattice $L$ of local dimension $d$ is \textbf{$\delta$-tight} if it's generated by vectors of norm at most $\frac{\delta}{d^2}$. We'll informally say that $L$ is \textbf{tight} if it's $\delta$-tight for small $\delta$.
\end{definition}

\begin{remark}
	The reader might find it interesting to note the similarity between $\delta$-tightness and the property, for actual sheaves of modules, of being generated by global sections.
\end{remark}

\begin{proposition} \label{thm:tensor_tight}
	If a lattice $L$ is $\delta$-tight, and $M$ is $\gamma$-tight, then $L \otimes M$ is $\delta\gamma$-tight.
\end{proposition}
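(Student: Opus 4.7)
The plan is to unpack the definition of $\delta\gamma$-tightness for $L \otimes M$ directly, by exhibiting a generating set of short vectors. First I would note that the local dimension of $L \otimes M$ is $d_L \cdot d_M$, where $d_L$ and $d_M$ are the local dimensions of $L$ and $M$ respectively. This is immediate from the fact that $\spn(L \otimes M) = \spn(L) \otimes \spn(M)$ (which follows from \ref{def:tensor_objects}), together with the standard formula for the dimension of a tensor product of vector spaces.

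Next, using $\delta$-tightness, I would pick a generating set $\{x_1, \ldots, x_m\}$ of $L$ (as an abelian group) with $||x_i|| \leq \delta/d_L^2$, and similarly a generating set $\{y_1, \ldots, y_n\}$ of $M$ with $||y_j|| \leq \gamma/d_M^2$. The key claim is then that the finite collection $\{x_i \otimes y_j\}$ generates $L \otimes M$ as an abelian group. To see this, any element of $L \otimes M$ is by definition a $\mathbb{Z}$-linear combination of elementary tensors $x \otimes y$ with $x \in L$ and $y \in M$; writing $x = \sum_i a_i x_i$ and $y = \sum_j b_j y_j$ with integer coefficients and expanding using bilinearity of the tensor product gives $x \otimes y = \sum_{i,j} a_i b_j (x_i \otimes y_j)$.

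Finally, I would compute the norms of these generators using the inner product on $\spn(L) \otimes \spn(M)$ from \ref{def:tensor_objects}: we have $||x_i \otimes y_j||^2 = \langle x_i, x_i \rangle \langle y_j, y_j \rangle = ||x_i||^2 \cdot ||y_j||^2$, so that
\[
||x_i \otimes y_j|| = ||x_i|| \cdot ||y_j|| \leq \frac{\delta}{d_L^2} \cdot \frac{\gamma}{d_M^2} = \frac{\delta\gamma}{(d_L d_M)^2}.
\]
Since $L \otimes M$ has local dimension $d_L d_M$ and is generated by vectors of norm at most $\delta\gamma/(d_L d_M)^2$, it is $\delta\gamma$-tight by definition.

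There is really no hard step here; the argument is essentially a compatibility check between the definition of tightness, the multiplicativity of local dimension under tensor products, and the multiplicativity of norms of elementary tensors. The only substantive point is that the bilinear-combinations argument produces a generating set from tight generating sets of $L$ and $M$, and this is a straightforward consequence of the construction of $L \otimes M$ as a subgroup of $\spn(L) \otimes \spn(M)$ in \ref{def:tensor_objects}.
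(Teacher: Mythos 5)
Your argument is correct and matches the paper's proof: both observe that the local dimension of $L \otimes M$ is $d_L d_M$, take tight generating sets $S_L$ and $S_M$, note that $\{x \otimes y : x \in S_L, y \in S_M\}$ generates $L \otimes M$, and bound the norms multiplicatively. You have simply spelled out the bilinearity argument and the norm computation in more detail than the paper, which states these as ``clearly''.
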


\begin{proof}
	Let $d_L$ and $d_M$ be the local dimensions of $L$ and $M$ respectively. Clearly $L \otimes M$ is locally $d_Ld_M$-dimensional, so we need to show that it can be generated by vectors of norm at most $\frac{\delta\gamma}{d_L^2d_M^2}$. We have that $L$ is generated by some set of vectors $S_L$ with norms at most $\frac{\delta}{d_L^2}$, and $M$ is generated by some set of vectors $S_M$ with norms at most $\frac{\gamma}{d_M^2}$. Then $\{x \otimes y | x \in S_L, y \in S_M\}$ clearly generates $L \otimes M$, and consists of vectors of norm at most $\frac{\delta\gamma}{d_L^2d_M^2}$, as required.
\end{proof}

The main goal of the rest of this section will be to prove the following theorem:

\begin{theorem} \label{thm:tight_additive}
	For any $\epsilon > 0$, there exists a $\delta$ such that any $\delta$-tight lattice is $\epsilon$-additive.
\end{theorem}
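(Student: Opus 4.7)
The plan is to reduce $\epsilon$-additivity to a pointwise estimate on translated theta functions, and then prove that estimate by Poisson summation, using the fact that a tight lattice has a dual lattice with no short vectors.

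First, using the characterization of short exact sequences in $\Lat$ from \ref{thm:cokernels} and \ref{thm:exactness_local}, we may assume $L$ is a normal sublattice of $M$ and $N = M/L$ is realized as the orthogonal projection of $M$ onto $\spn(L)^{\perp}$. Choosing a set-theoretic section $s:N \to M$ of this projection and writing $s(n) = p(n) + n$ with $p(n) \in \spn(L)$, the orthogonality of $\spn(L)$ and $\spn(L)^{\perp}$ decomposes
\[
\rho(M) = \sum_{n \in N} e^{-\pi\|n\|^{2}}\,\rho\bigl(L + p(n)\bigr), \qquad \rho(N) = \sum_{n \in N} e^{-\pi\|n\|^{2}}.
\]
So the target inequality $\ell(L) + \ell(N) - \ell(M) \leq \epsilon$ follows termwise from the single bound $\rho(L+v) \geq e^{-\epsilon}\rho(L)$ for every $v \in \spn(L)$.

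Second, I would apply Poisson summation on $\spn(L)$:
\[
\rho(L+v) = \frac{1}{\vol(L)} \sum_{w \in L^{*}} e^{2\pi i\langle v, w\rangle} e^{-\pi\|w\|^{2}}.
\]
Pairing $w$ with $-w$ to take real parts yields the lower bound $\rho(L+v) \geq \vol(L)^{-1}\bigl(2 - \rho(L^{*})\bigr)$. Combining this with $\rho(L) = \rho(L^{*})/\vol(L)$ (the $v = 0$ case) gives $\rho(L+v)/\rho(L) \geq 2/\rho(L^{*}) - 1$, so it suffices to make $\rho(L^{*}) - 1$ arbitrarily small by shrinking $\delta$.

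Third, I would bound the shortest vector of $L^{*}$ from below and then estimate $\rho(L^{*}) - 1$. If $L$ is $\delta$-tight and locally $d$-dimensional, one can extract from its generating set $d$ linearly independent vectors $v_{1}, \ldots, v_{d} \in L$ of norm at most $\delta/d^{2}$. For any nonzero $w \in L^{*}$, at least one $\langle w, v_{i}\rangle$ is a nonzero integer, so Cauchy--Schwarz forces $\|w\| \geq d^{2}/\delta$. A standard volume-packing argument (the balls of radius $d^{2}/(2\delta)$ around points of $L^{*}$ are disjoint) then gives $|L^{*} \cap B_{r}| \leq (1 + 2r\delta/d^{2})^{d}$, and shelling the Gaussian sum over $L^{*}$ according to this count yields a bound of the form $\rho(L^{*}) - 1 \leq C \cdot 5^{d} e^{-\pi d^{4}/\delta^{2}}$ once $\delta$ is sufficiently small. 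Choosing $\delta$ so that this quantity is at most $(1-e^{-\epsilon})/(1+e^{-\epsilon})$ completes the proof. The main obstacle, and the reason the definition of $\delta$-tightness carries the apparently peculiar factor $1/d^{2}$, is achieving uniformity in the local dimension $d$: the combinatorial growth $5^{d}$ in the packing count must be swamped by the exponential decay arising from $\lambda_{1}(L^{*})^{2} \geq d^{4}/\delta^{2}$, and this works because $d \mapsto 5^{d} e^{-\pi d^{4}/\delta^{2}}$ is decreasing on $d \geq 1$ for $\delta$ small, attaining its supremum $5\,e^{-\pi/\delta^{2}}$ at $d = 1$. A weaker scaling such as $\delta/d$ would yield only $\lambda_{1}(L^{*})^{2} \geq d^{2}/\delta^{2}$, which cannot dominate $5^{d}$ uniformly in $d$, so the $d^{2}$ in the definition is essentially sharp for this argument.
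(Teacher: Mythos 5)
Your proof is correct, and it shares the paper's top-level strategy — reduce $\epsilon$-additivity to the pointwise bound $\rho(L+v) \geq e^{-\epsilon}\rho(L)$ via the coset decomposition of $\rho(M)$ — but proves that pointwise bound by a genuinely different mechanism. The paper gets it from \ref{thm:coset_mass_4}, which rests on \ref{thm:coset_mass_1}: a direct inductive estimate showing $(1-2\delta')^d \leq \vol(L)\rho(L+x) \leq (1+2\delta')^d$ for a lattice with a basis of short vectors, with the $1$-dimensional base case \ref{thm:tight-1d} handled by comparing the Gaussian sum to its integral via an Abel-summation trick. You instead apply Poisson summation to pass to the dual lattice, observe that tightness of $L$ forces $\lambda_1(L^*)$ to be large, and control $\rho(L^*) - 1$ by a packing-and-shelling count. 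Your route is more conceptual and makes contact with the dual-lattice picture that already appears in section 11 via the Poisson formula; the paper's route is more elementary (no Poisson, no packing) and has the advantage of producing a two-sided bound on $\vol(L)\rho(L+x)$, which the paper reuses in \ref{thm:tight_volume_chi}.

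One parenthetical claim in your write-up is wrong, though it does not affect the proof: you assert that the scaling $\delta/d$ in the definition of tightness would not suffice because $e^{\pi d^2/\delta^2}$ ``cannot dominate $5^d$ uniformly in $d$.'' It can. The logarithmic derivative of $d \mapsto 5^d e^{-\pi d^2/\delta^2}$ is $\ln 5 - 2\pi d/\delta^2$, which is negative for all $d \geq 1$ once $\delta^2 < 2\pi/\ln 5$, so the supremum over $d \geq 1$ is again attained at $d = 1$ and tends to $0$ as $\delta \to 0$. In fact your argument would go through with any scaling $\delta/d^{\alpha}$, $\alpha > 1/2$. The $d^2$ in the paper's definition is calibrated to the paper's own route: one factor of $d$ is spent in \ref{thm:tight_basis_1} passing from a generating set to a basis, and the other makes $(1 + 2\delta/d)^d$ uniformly bounded in \ref{thm:coset_mass_1}.
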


This will allow us to apply the numerical zig-zag lemma to a short exact sequence of chain complexes consisting of tight lattices.

\begin{proposition} \label{thm:tight_basis_1}
	If a locally $d$-dimensional lattice is generated by a set of vectors $S$ with $||x|| \leq \delta$ for all $x \in S$, then it has a basis consisting of vectors of norm $\leq d\delta$.
\end{proposition}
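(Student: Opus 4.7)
The plan is to proceed by induction on the local dimension $d$, using the orthogonal-projection construction from \ref{thm:projection_sublattice} to reduce to a smaller lattice and then lifting back. The base case $d = 0$ is vacuous (the empty set is a basis), and $d = 1$ is immediate since some nonzero $s \in S$ already spans $L$ up to a primitive generator, which cannot be longer than $s$.

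For the inductive step, I would first pick $b_1$ to be a shortest nonzero vector in $L$. Because $S$ contains some nonzero element (as $d \geq 1$), we have $\|b_1\| \leq \delta$, and the minimality forces $b_1$ to be primitive, so $L \cap \mathbb{R}b_1 = \mathbb{Z}b_1$. Let $\pi$ be the orthogonal projection from $\spn(L)$ onto $b_1^\perp$. By \ref{thm:projection_sublattice}, $\pi(L)$ is a lattice of local dimension $d-1$, and since $\pi$ is a contraction, it is generated by the vectors $\pi(s)$ for $s \in S$, each of norm at most $\delta$. The inductive hypothesis gives a basis $c_2, \ldots, c_d$ of $\pi(L)$ with $\|c_j\| \leq (d-1)\delta$.

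Next, I would lift each $c_j$ to a preimage $b_j \in L$, choosing the lift so that the component of $b_j$ along $b_1$ lies in $[-\|b_1\|/2,\,\|b_1\|/2]$; this is possible precisely because $L \cap \mathbb{R}b_1 = \mathbb{Z}b_1$, so subtracting an appropriate integer multiple of $b_1$ adjusts the $b_1$-coordinate by any integer multiple of $\|b_1\|$. A short computation using orthogonality then gives
\[
\|b_j\|^2 \;\leq\; \|c_j\|^2 + \tfrac{1}{4}\|b_1\|^2 \;\leq\; (d-1)^2\delta^2 + \tfrac{1}{4}\delta^2 \;\leq\; d^2\delta^2,
\]
where the last inequality holds for all $d \geq 1$. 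Finally, I would verify that $b_1, b_2, \ldots, b_d$ is a basis of $L$: they are linearly independent since $c_2, \ldots, c_d$ are and $b_1 \neq 0$, and they generate $L$ because any $x \in L$ satisfies $\pi(x) = \sum_{j=2}^d m_j c_j$ for integers $m_j$, so $x - \sum m_j b_j \in L \cap \mathbb{R}b_1 = \mathbb{Z}b_1$.

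The main obstacle is the norm bookkeeping: one has to choose $b_1$ carefully (shortest vector, hence primitive) so that the lifts of the $c_j$ can be adjusted modulo $\mathbb{Z}b_1$, and the constant in front of $\delta$ has to be controlled inductively. The bound $d\delta$ is just loose enough that the Pythagorean estimate $(d-1)^2 + 1/4 \leq d^2$ closes the induction.
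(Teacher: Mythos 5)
Your proof is correct and takes essentially the same approach as the paper: induct on $d$, project onto a rank-one quotient to apply the inductive hypothesis, then lift the basis vectors back to $L$ after reducing the component along the rank-one sublattice by an integer multiple of its generator. The only cosmetic differences are that the paper picks an arbitrary $x \in S$ and works with the normal sublattice $L \cap \spn(x)$ (whose primitive generator $x'$ automatically satisfies $\|x'\| \leq \|x\| \leq \delta$) rather than taking a shortest nonzero vector, and the paper reduces the fractional part to $[0,1]$ and uses the triangle inequality $\delta + (d-1)\delta = d\delta$ where you center in $[-1/2,1/2]$ and use a Pythagorean estimate.
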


\begin{proof}
	Clearly this holds for the zero lattice. Let $d \in \{1, 2, 3 \ldots\}$, let $L$ be any lattice that is locally $d$-dimensional, and suppose the result holds for all lattices of local dimension less than $d$. If we can show that the result holds for $L$, then by induction it must hold for all lattices.

	Pick some $x \in S$. Let M be the normal sublattice $L \cap \spn(x)$. Let $f$ be the projection mapping from $L$ to $L / M$. We clearly have that $f(S)$ generates $L/M$, so by the induction hypothesis there exists a basis $B$ of $L/M$ consisting of vectors with norm at most $(d-1)\delta$. For each $b \in B$, we can find a vector $c \in L$ with $f(c) = b$ and $||c|| \leq d\delta$. To see this, take $c_0$ to be any element of $L$ in $f^{-1}(\{b\})$. Then if we write $c_0$ as $\lambda x + b$, we have that $c := c_0 - \left \lfloor{\lambda}\right \rfloor x$ satisfies the required properties. By choosing such a $c$ for each $b \in B$, we can construct a subset $C$ of $L$ that maps bijectively onto $B$ via $f$, and satisfies $||c|| \leq d\delta$ for all $c \in C$. Let $x'$ be a generator of $M$. We clearly have that $||x'|| \leq ||x|| \leq \delta \leq d\delta$, and it's straightforward to verify that $C \cup \{x'\}$ is a basis for $L$. So the result holds for L. Therefore, by induction, it holds for all lattices.
\end{proof}

\begin{corollary} \label{thm:tight_basis_2}
	Any $\delta$-tight lattice has a basis consisting of vectors with norm at most $\frac{\delta}{d}$
\end{corollary}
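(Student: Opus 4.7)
The plan is to derive this corollary as an immediate specialization of Proposition \ref{thm:tight_basis_1}. By the definition of $\delta$-tightness, a $\delta$-tight lattice $L$ of local dimension $d$ comes equipped with a generating set $S$ of vectors each of norm at most $\frac{\delta}{d^2}$. I would apply Proposition \ref{thm:tight_basis_1} to $L$ with this particular generating set, taking the role of ``$\delta$'' in that proposition to be $\frac{\delta}{d^2}$.

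The proposition then produces a basis of $L$ consisting of vectors of norm at most $d \cdot \frac{\delta}{d^2} = \frac{\delta}{d}$, which is exactly the bound claimed. No additional machinery is required, and there is no real obstacle — the corollary is essentially a bookkeeping consequence of the factor of $d^2$ built into the definition of $\delta$-tightness, chosen precisely so that the linear loss in Proposition \ref{thm:tight_basis_1} leaves a residual factor of $\frac{1}{d}$.
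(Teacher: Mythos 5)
Your proof is correct and is exactly the intended derivation; the paper omits the proof of this corollary precisely because it is the immediate specialization of Proposition \ref{thm:tight_basis_1} that you describe, with the factor of $d^2$ in the definition of $\delta$-tightness chosen to absorb the linear loss.
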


\begin{proposition} \label{thm:tight-1d}
	Suppose $\sigma:\mathbb{R} \rightarrow \mathbb{R}$ is given by $\sigma(x) = e^{-\pi(ax + b)^2}$ for some positive $a$ and real $b$. Then $|\sum\limits_{n \in \mathbb{Z}}\sigma(n) - \frac{1}{a}| \leq 2$.
\end{proposition}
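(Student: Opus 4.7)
The plan is to compare the sum $\sum_{n \in \mathbb{Z}} \sigma(n)$ to the integral $\int_{\mathbb{R}} \sigma(x)\,dx$, which equals $\tfrac{1}{a}$ by the substitution $t = ax + b$ together with the classical identity $\int_{\mathbb{R}} e^{-\pi t^2}\,dt = 1$. Because $\sigma$ is a Gaussian peaking at $\mu := -b/a$ with maximum value $1$, the error in this Riemann-sum approximation should be controlled by the two grid points closest to $\mu$.

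Concretely, I would exploit the fact that $\sigma$ is increasing on $(-\infty, \mu]$ and decreasing on $[\mu, \infty)$. Setting $m = \lfloor \mu \rfloor$, for the upper bound: whenever $n + 1 \leq \mu$ the interval $[n, n+1]$ lies in the increasing region, so $\sigma(n) \leq \int_n^{n+1} \sigma\,dx$; summing over $n \leq m - 1$ telescopes to $\sum_{n \leq m-1} \sigma(n) \leq \int_{-\infty}^{m} \sigma\,dx$. Symmetrically, using $\sigma(n) \leq \int_{n-1}^{n} \sigma\,dx$ whenever $n - 1 \geq \mu$ gives $\sum_{n \geq m+2} \sigma(n) \leq \int_{m+1}^{\infty} \sigma\,dx$. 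The two remaining terms $\sigma(m)$ and $\sigma(m+1)$ are each at most $1$, so altogether $\sum_{n} \sigma(n) \leq \tfrac{1}{a} + 2$.

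For the matching lower bound I would run the reversed monotonicity comparisons: $\sigma(n) \geq \int_{n-1}^{n} \sigma\,dx$ for $n \leq m$ (interval in the increasing region) and $\sigma(n) \geq \int_{n}^{n+1} \sigma\,dx$ for $n \geq m+1$ (interval in the decreasing region). These telescope to $\sum_{n} \sigma(n) \geq \int_{-\infty}^{m} \sigma + \int_{m+1}^{\infty} \sigma = \tfrac{1}{a} - \int_{m}^{m+1} \sigma\,dx \geq \tfrac{1}{a} - 1$. Combining the two sides gives $\bigl|\sum_{n} \sigma(n) - \tfrac{1}{a}\bigr| \leq 2$.

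The only real subtlety is the bookkeeping around the peak: the interval $[m, m+1]$ contains $\mu$, so $\sigma$ is not monotonic there and the telescoping cannot be pushed through; the crude bound $\sigma \leq 1$ is precisely what lets us absorb the resulting two boundary terms into the final constant $2$. One could alternatively prove the result via Poisson summation, using the explicit Fourier transform $\widehat{\sigma}(\xi) = \tfrac{1}{a} e^{-\pi \xi^2 / a^2} e^{2\pi i b \xi / a}$ and a comparison $\sum_{k \geq 1} e^{-\pi k^2/a^2} \leq \int_0^\infty e^{-\pi x^2/a^2}\,dx = a/2$; that approach would actually yield the sharper constant $1$, but the direct comparison above suffices for the stated bound and requires no Fourier analysis.
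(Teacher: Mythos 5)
Your proof is correct, but it takes a different route from the paper. The paper uses a first-order Euler--Maclaurin identity: it writes
\[
\sum_{n\in\mathbb{Z}}\sigma(n)-\int_{-\infty}^{\infty}\sigma(x)\,dx=\int_{-\infty}^{\infty}\sigma'(x)\bigl(x-\lfloor x\rfloor\bigr)\,dx,
\]
then bounds the right-hand side in absolute value by $\int_{-\infty}^{\infty}|\sigma'(x)|\,dx$, which equals the total variation of $\sigma$, namely $2$ (the Gaussian rises from $0$ to its peak value $1$ and falls back to $0$). Your argument instead compares the sum to the integral term-by-term using the monotonicity of $\sigma$ on either side of its peak at $\mu=-b/a$, telescoping the resulting Riemann-sum inequalities and paying a cost of at most $1$ per ``straddling'' grid point. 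Both are elementary and rest on the same two facts ($\int\sigma=1/a$ and $\sup\sigma=1$); the paper's version is more compact and the constant $2$ emerges as a total variation, while yours makes the bookkeeping around the peak explicit and incidentally gives the asymmetric and slightly sharper two-sided bound $\sum\sigma(n)-1/a\in[-1,2]$. You also correctly observe that Poisson summation would give a sharper constant, but that it is unnecessary here.
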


\begin{proof}
	$$\sum\limits_{n \in \mathbb{Z}}\sigma(n) - \frac{1}{a} = \sum\limits_{n \in \mathbb{Z}}\sigma(n) - \int_{-\infty}^{\infty} \sigma(x) dx$$
	$$ = \int_{-\infty}^{\infty} \sigma'(x)(x - \left \lfloor{x}\right \rfloor ) dx$$
	$$|\sum\limits_{n \in \mathbb{Z}}\sigma(n) - \frac{1}{a}| = |\int_{-\infty}^{\infty} \sigma'(x)(x - \left \lfloor{x}\right \rfloor ) dx |$$
	$$\leq \int_{-\infty}^{\infty} |\sigma'(x)(x - \left \lfloor{x}\right \rfloor )| dx$$
	$$= \int_{-\infty}^{\infty} |\sigma'(x)| \cdot |(x - \left \lfloor{x}\right \rfloor )| dx$$
	$$\leq \int_{-\infty}^{\infty} |\sigma'(x)| dx$$
	$$= 2$$
\end{proof}

\begin{proposition} \label{thm:coset_mass_1}
	Suppose L is a locally d-dimensional lattice with a basis $b_1 \ldots b_d$ satisfying $\|b_i\| \leq \delta$ for $i \in \{1 \ldots d\}$. Then for any $x \in \spn(L)$, we have: $$(1 - 2\delta)^d \leq \vol(L)\rho(L+x) \leq (1 + 2\delta)^d$$
\end{proposition}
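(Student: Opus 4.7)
The plan is to proceed by induction on the local dimension $d$. The base case $d = 0$ is trivial, and $d = 1$ follows directly from Proposition \ref{thm:tight-1d}: when $L = b_1\mathbb{Z}$ we have $\vol(L) = \|b_1\|$, and writing points of $L + x$ in coordinates aligned with $b_1$ reduces $\vol(L)\rho(L+x)$ to $\|b_1\|\sum_{n \in \mathbb{Z}} e^{-\pi(n\|b_1\|+t)^2}$ for some real $t$, which the 1D estimate places in $[1-2\|b_1\|,\, 1+2\|b_1\|] \subseteq [1-2\delta,\, 1+2\delta]$.

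For the inductive step I would peel off the last basis vector. Let $M = b_1\mathbb{Z} + \cdots + b_{d-1}\mathbb{Z}$, which is a normal sublattice of $L$ because $b_1,\ldots,b_d$ is a basis (any $L$-point lying in $\spn(M)$ must have zero coefficient on $b_d$), and which inherits the tightness hypothesis in dimension $d-1$. Let $P$ be orthogonal projection onto $\spn(M)^\perp$ inside $\spn(L)$ and set $P^\perp = I - P$. By Proposition \ref{thm:exact_volume} applied to $0 \to M \to L \to L/M \to 0$, we have $\vol(L) = \vol(M)\cdot\|P(b_d)\|$. Each element of $L + x$ decomposes uniquely as $m + nb_d + x$ with $m \in M$ and $n \in \mathbb{Z}$, and since $\|y\|^2 = \|P^\perp y\|^2 + \|Py\|^2$ the Gaussian factors as $\rho(y) = \rho(P^\perp y)\,\rho(Py)$. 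Summing yields
\begin{equation*}
\rho(L + x) = \sum_{n \in \mathbb{Z}} \rho\bigl(nP(b_d) + P(x)\bigr)\cdot\rho\bigl(M + nP^\perp(b_d) + P^\perp(x)\bigr).
\end{equation*}

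Now I would apply the induction hypothesis, which bounds $\vol(M)\rho(M + z) \in [(1-2\delta)^{d-1},\, (1+2\delta)^{d-1}]$ uniformly in the shift $z$, and then handle the remaining outer sum via Proposition \ref{thm:tight-1d}. Fixing a unit vector in the 1-dimensional space $\spn(M)^\perp \cap \spn(L)$ turns the outer sum into $\sum_n e^{-\pi(na+t)^2}$ with $a = \|P(b_d)\|$ and some $t \in \mathbb{R}$, so multiplying by $\|P(b_d)\|$ places it in $[1-2a,\, 1+2a] \subseteq [1-2\delta,\, 1+2\delta]$, using $\|P(b_d)\| \leq \|b_d\| \leq \delta$. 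Multiplying the two sets of bounds and $\|P(b_d)\|$ with $\vol(M)$ to recover $\vol(L)$ yields $\vol(L)\rho(L+x) \in [(1-2\delta)^d,\, (1+2\delta)^d]$.

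The main subtlety I anticipate is that the basis is not orthogonal, so $L$ does not split isometrically as $M \oplus P(b_d)\mathbb{Z}$; the shift $nP^\perp(b_d)$ couples the inner $M$-sum to the outer index $n$. The argument succeeds only because the induction hypothesis is uniform in the shift $z$, so the coupling disappears after bounding. A small bookkeeping point: when $\delta \geq 1/2$ the lower bound $(1-2\delta)^d$ is nonpositive and therefore automatic, so for the lower bound I may freely assume $\delta < 1/2$; this also guarantees that all intermediate quantities coming from the induction hypothesis and from Proposition \ref{thm:tight-1d} are nonnegative, which is what legitimizes multiplying the two families of bounds.
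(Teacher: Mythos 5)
Your argument is exactly the paper's: peel off the last basis vector, split the sum over $L+x$ by cosets of $M = b_1\mathbb{Z} + \cdots + b_{d-1}\mathbb{Z}$, factor $\rho$ into a $\spn(M)$-part and a $\spn(M)^\perp$-part, apply the induction hypothesis uniformly in the $M$-shift, and bound the remaining one-dimensional sum via \ref{thm:tight-1d} (the paper does this by invoking the $d=1$ case for $L/M$, which is the same thing). The key point you identify — that the uniformity of the inductive bound in the shift is what decouples the inner sum from the outer index $n$ — is precisely what makes the argument go through.

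One caveat on your $\delta \geq 1/2$ remark: when $d$ is even and $\delta > 1/2$, the quantity $(1-2\delta)^d$ is strictly positive, so the lower bound is not automatic. In fact the proposition as stated fails in that regime — take $L = \delta\mathbb{Z}^d$ with $d=2$ and $\delta$ large, so that $\vol(L)\rho(L) \approx \delta^2$ while $(1-2\delta)^2 \approx 4\delta^2$. This is a pre-existing issue with the proposition (the paper's ``similar argument'' for the lower bound runs into the same sign problem), and it is immaterial since \ref{thm:coset_mass_2} through \ref{thm:coset_mass_4} only ever invoke the estimate for small $\delta$. Your instinct to restrict to $\delta < 1/2$ for the lower bound is the right one; the justification for discarding larger $\delta$ is just that the downstream applications never need it, not that the bound becomes vacuous.
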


\begin{proof}
	When d = 1, this follows from \ref{thm:tight-1d}. For higher d, we prove the result by induction, supposing the result holds for all lattices of local dimension $< d$. Let $M$ be the normal sublattice of $L$ generated by $b_1 \ldots b_{d-1}$, and let $V = \spn(M)$. We can write $x$ as $x_1 + x_2$, where $x_1 \in V$ and $x_2 \in V^\perp$. Similarly, we can write $b_d$ as $y_1 + y_2$, where $y_1 \in V$ and $y_2 \in V^\perp$. Clearly $L/M$ is the lattice generated by $y_2$.
	We can break up $L + x$ as
	$$L + x = \bigcup \limits_{n \in \mathbb{Z}} (M + x + n\cdot b_d)$$
	from which it follows (using the induction hypothesis on the fourth and sixth lines):
	$$\rho(L + x) = \sum \limits_{n \in \mathbb{Z}}\rho(M + x + n\cdot b_d)$$
	$$= \sum \limits_{n \in \mathbb{Z}}\rho(M + x_1 + x_2 + n\cdot y_1 + n \cdot y_2)$$
	$$= \sum \limits_{n \in \mathbb{Z}}\rho(M + x_1 + n\cdot y_1)\rho(x_2 + n \cdot y_2)$$
	$$\leq \sum \limits_{n \in \mathbb{Z}}\frac{(1 + 2\delta)^{d - 1}}{\vol(M)}\rho(x_2 + n \cdot y_2) $$
	$$= \frac{(1 + 2\delta)^{d - 1}}{\vol(M)}\rho(L/M + x_2)$$
	$$\leq \frac{(1 + 2\delta)^{d - 1}}{\vol(M)}\cdot\frac{(1 + 2\delta)}{\vol(L/M)}$$
	$$= \frac{(1 + 2\delta)^d}{\vol(L)}$$
	$$\vol(L)\rho(L+x) \leq (1 + 2\delta)^d$$

	By a similar argument, we have $\vol(L)\rho(L+x) \geq (1 - 2\delta)^d$. By induction, we have that these inequalities hold for all d.
\end{proof}

\begin{corollary} \label{thm:coset_mass_2}
	$$2-e^{2d\delta} \leq \vol(L)\rho(L+x) \leq e^{2d\delta}$$
\end{corollary}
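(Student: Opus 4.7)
The plan is to derive both inequalities from the bounds $(1-2\delta)^d \leq \vol(L)\rho(L+x) \leq (1+2\delta)^d$ supplied by Proposition \ref{thm:coset_mass_1}, using only the elementary scalar inequality $1 + t \leq e^t$ (valid for all real $t$) and Bernoulli's inequality.

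For the upper bound, I would first note $1 + 2\delta \leq e^{2\delta}$ and raise both sides to the $d$-th power to obtain $(1+2\delta)^d \leq e^{2d\delta}$. Chaining with the upper half of \ref{thm:coset_mass_1} yields $\vol(L)\rho(L+x) \leq e^{2d\delta}$ immediately.

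For the lower bound, the target inequality $(1-2\delta)^d \geq 2 - e^{2d\delta}$ splits into two cases. When $\delta \leq 1/2$, Bernoulli's inequality gives $(1-2\delta)^d \geq 1 - 2d\delta$; combining with $1 + 2d\delta \leq e^{2d\delta}$ (another application of $1+t\leq e^t$) yields $(1-2\delta)^d \geq 1 - 2d\delta \geq 2 - e^{2d\delta}$, and the lower half of \ref{thm:coset_mass_1} finishes this case. When $\delta > 1/2$, one has $2d\delta > d \geq 1$, so $e^{2d\delta} > e > 2$ and the right-hand side $2 - e^{2d\delta}$ is negative; since $\vol(L)\rho(L+x)$ is manifestly nonnegative, the inequality is trivial. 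The degenerate case $d = 0$ is also trivial: all three quantities equal $1$.

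There is no real obstacle here; the whole corollary is routine scalar manipulation of Proposition \ref{thm:coset_mass_1}, designed merely to replace the polynomial bounds by exponential ones that are better behaved when multiplied across many factors (as will be needed in the next section when these estimates are iterated over tensor products or telescoping products of lattices).
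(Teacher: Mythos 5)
Your proposal is correct, and the upper bound is handled exactly as in the paper (chain $(1+2\delta)^d \leq e^{2d\delta}$, using $1+t\leq e^t$, with the upper half of Proposition~\ref{thm:coset_mass_1}). The lower bound is where you diverge. The paper proves the single inequality $2-(1+2\delta)^d \leq (1-2\delta)^d$ by noting that the binomial expansions of $(1+2\delta)^d$ and $(1-2\delta)^d$ have odd-degree terms that cancel in pairs, so their sum is $2\sum_{k\text{ even}}\binom{d}{k}(2\delta)^k \geq 2$; this holds uniformly for all $\delta \geq 0$ and $d \geq 0$ with no case split, and then the already-established $(1+2\delta)^d\leq e^{2d\delta}$ finishes. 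You instead aim directly at $(1-2\delta)^d \geq 2 - e^{2d\delta}$ via Bernoulli's inequality plus $1+2d\delta\leq e^{2d\delta}$, which forces you to split off the regimes $\delta > 1/2$ (where the target is trivially nonpositive) and $d=0$. Both arguments are elementary and sound; the paper's parity trick is slightly tidier because it avoids the case analysis, while yours relies on more familiar off-the-shelf inequalities at the cost of a little bookkeeping. Your closing speculation about why the exponential form is wanted is slightly off --- it is used in \ref{thm:coset_mass_3} and then \ref{thm:coset_mass_4} to get a $\delta$ independent of the local dimension $d$, not for iterating over tensor products --- but this does not affect the correctness of the proof itself.
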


\begin{proof}
	We easily get $2 - (1 + 2\delta)^d \leq (1 - 2\delta)^d$ from expanding both sides. We also get $(1 + 2\delta)^d \leq e^{2d\delta}$ from $1 + 2\delta \leq e^{2\delta}$. Combining these inequalities with \ref{thm:coset_mass_1}, we have
	$$2-e^{2d\delta} \leq 2 - (1 + 2\delta)^d \leq (1 - 2\delta)^d \leq \vol(L)\rho(L+x)$$
	$$\leq (1 + 2\delta)^d \leq e^{2d\delta}$$
\end{proof}

\begin{corollary} \label{thm:coset_mass_3}
	$$|\vol(L)\rho(L+x) - 1| \leq e^{2d\delta} - 1$$
\end{corollary}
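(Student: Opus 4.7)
The plan is to derive this almost immediately from Corollary \ref{thm:coset_mass_2}. That corollary gives the double inequality
\[
2 - e^{2d\delta} \;\leq\; \vol(L)\rho(L+x) \;\leq\; e^{2d\delta},
\]
so the first step is just to subtract $1$ throughout:
\[
1 - e^{2d\delta} \;\leq\; \vol(L)\rho(L+x) - 1 \;\leq\; e^{2d\delta} - 1.
\]

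Now I would observe that the two outer bounds are negatives of each other, i.e.\ the interval on the right is $[-(e^{2d\delta}-1),\, e^{2d\delta}-1]$. Passing to absolute values on the middle expression then gives exactly
\[
|\vol(L)\rho(L+x) - 1| \;\leq\; e^{2d\delta} - 1,
\]
which is the claim. There is no genuine obstacle here; the corollary is a purely formal repackaging of \ref{thm:coset_mass_2} into a symmetric ``two-sided error bound'' form that will presumably be more convenient to apply later (e.g.\ to estimate how close $\vol(L)\rho(L+x)$ is to $1$ uniformly when $d\delta$ is small, using $e^{2d\delta}-1 = O(d\delta)$).
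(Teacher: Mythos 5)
Your proof is correct and matches the paper's, which simply states that the corollary follows immediately from \ref{thm:coset_mass_2}; subtracting $1$ and noting the two bounds are symmetric about $0$ is exactly the intended (and only) step.
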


\begin{proof}
	This follows immediately from \ref{thm:coset_mass_2}.
\end{proof}

\begin{theorem} \label{thm:coset_mass_4}
	For all $\epsilon >0$ there exists a $\delta > 0$ such that for any $\delta$-tight lattice $L$, and any $x \in \spn(L)$, we have $|\ln(\vol(L)\rho(L+x))| < \epsilon$.
\end{theorem}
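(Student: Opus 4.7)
The plan is to combine Corollary \ref{thm:tight_basis_2} with Corollary \ref{thm:coset_mass_3} in a way that exploits the specific $d^2$ normalization in the definition of $\delta$-tightness. By \ref{thm:tight_basis_2}, a $\delta$-tight locally $d$-dimensional lattice $L$ (with $d \geq 1$) admits a basis whose vectors have norm at most $\delta/d$. Feeding this basis bound into \ref{thm:coset_mass_3}, applied with $\delta/d$ in place of the basis bound $\delta$ there, yields
$$|\vol(L)\rho(L+x) - 1| \leq e^{2d(\delta/d)} - 1 = e^{2\delta} - 1.$$
The crucial observation is that $d$ has disappeared from the right-hand side. This uniform (in $d$) bound is the content of the theorem; everything else has already been done.

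From this uniform bound one has $\vol(L)\rho(L+x) \in [2 - e^{2\delta},\, e^{2\delta}]$. Given $\epsilon > 0$, I would choose $\delta > 0$ small enough that
$$e^{-\epsilon} < 2 - e^{2\delta} \quad\text{and}\quad e^{2\delta} < e^{\epsilon},$$
which is possible since both quantities depend continuously on $\delta$ and coincide with $1$ at $\delta = 0$; concretely, any $\delta < \min(\epsilon/2,\, \tfrac12 \ln(2 - e^{-\epsilon}))$ works. For such a $\delta$, any $\delta$-tight lattice $L$ satisfies $\vol(L)\rho(L+x) \in (e^{-\epsilon}, e^{\epsilon})$, so $|\ln(\vol(L)\rho(L+x))| < \epsilon$ as required.

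The only edge case is the zero lattice, where $d = 0$ and the expression $\delta/d$ is undefined; but then $\vol(L) = 1$ and $\spn(L) = \{0\}$ force $\vol(L)\rho(L+x) = 1$ trivially, so the conclusion holds automatically.

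I do not anticipate any real obstacle here: the analytic estimate was carried out in \ref{thm:coset_mass_1}, and \ref{thm:tight_basis_1}--\ref{thm:tight_basis_2} already converted a spanning-set norm bound into a basis norm bound at the cost of a factor of $d$. The whole point of this theorem is simply to record that the $d^2$ in the denominator of the definition of $\delta$-tightness has been chosen precisely so that this factor of $d$, combined with the factor of $d$ in the exponent $e^{2d\delta}$ of \ref{thm:coset_mass_3}, cancels the $d^2$ and leaves an estimate that is uniform across all local dimensions.
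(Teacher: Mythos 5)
Your proof is correct and follows essentially the same route as the paper: both apply Corollary \ref{thm:tight_basis_2} to get the basis bound $\delta/d$, feed this into Corollary \ref{thm:coset_mass_3} so the two factors of $d$ cancel and leave the uniform bound $e^{2\delta}-1$, and then choose $\delta$ small. The paper invokes continuity of $\ln$ at $1$ abstractly (taking $\delta = \tfrac12\ln(\gamma+1)$) while you give the explicit threshold $\delta < \min(\epsilon/2,\ \tfrac12\ln(2 - e^{-\epsilon}))$ and also note the trivial $d=0$ edge case, but these are cosmetic refinements of the same argument.
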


\begin{proof}
	Since $\ln$ is continuous and $\ln(1) = 0$, we can find a $\gamma > 0$ such that $|\ln(a)| < \epsilon$ whenever $|a - 1| \leq \gamma$. Take $\delta = \frac{\ln(\gamma + 1)}{2}$. Then $\gamma = e^{2\delta} - 1$. Now whenever a lattice L is $\delta$-tight, we know from \ref{thm:tight_basis_2} that it has a basis of vectors with norm at most $\frac{\delta}{d}$, and so from \ref{thm:coset_mass_3} we have
	$$|\vol(L)\rho(L+x) - 1| \leq e^{2\delta} - 1 = \gamma$$
	and therefore
	$$|\ln(\vol(L)\cdot\rho(L+x))| < \epsilon$$
\end{proof}

\begin{proof} [Proof of Theorem \ref{thm:tight_additive}]
	Use \ref{thm:coset_mass_4} to pick a $\delta$ so that, for any $\delta$-tight lattice $L$ and any $x \in \spn(L)$, 
	$|\ln(\vol(L)\cdot\rho(L+x))| < \epsilon / 2$.
	
	Then, for any $x \in \spn(L)$, we have both $|\ln(\vol(L)) + \ln(\rho(L))| < \epsilon / 2$ and $|\ln(\vol(L)) + \ln(\rho(L+x))| < \epsilon / 2$, so from the triangle inequality we have $|\ln(\rho(L)) - \ln(\rho(L+x))| < \epsilon$. Therefore, the following holds whenever $L$ is $\delta$-tight:
	
	\begin{equation} \label{eqn:rho_inf}
		\inf_{x \in \spn(L)}\ln(\rho(L + x)) \geq \ln(\rho(L)) - \epsilon = \ell(L) - \epsilon
	\end{equation}

	Now suppose $L$ is $\delta$-tight and $0 \rightarrow L \xrightarrow{f} M \xrightarrow{g} N \rightarrow 0$ is a short exact sequence of lattices.
	
	As noted in section 7, we can assume without loss of generality that $L$ is a normal sublattice of $M$ and $N$ is the quotient lattice $L/M$. Then $g$ is the projection onto $\spn(L)^\perp$.
	
	We can break down the sum $\rho(M)$ into pieces corresponding to different cosets of $L$. Then the bound \eqref{eqn:rho_inf} gives us a bound on $\rho(M)$. More precisely,
	
	$$\rho(M)  = \sum_{y \in N}\sum_{\substack{z \in M \\ g(z) = y}}\rho(z)$$
	$$= \sum_{y \in N}\rho(y)\sum_{\substack{z \in M \\ g(z) = y}}\rho(z - y)$$
	$$\geq \sum_{y \in N}\rho(y)\inf_{x \in \spn(L)}\rho(L + x)$$
	$$= \rho(N)\inf_{x \in \spn(L)}\rho(L + x)$$
	
	Taking the logarithm of both sides,
	
	$$\ell(M) \geq \ell(N) + \ln(\inf_{x \in \spn(L)}\rho(L + x))$$
	$$= \ell(N) + \inf_{x \in \spn(L)}\ln(\rho(L + x))$$
	
	Finally, by applying \ref{eqn:rho_inf}, we have:
	$$\ell(M) \geq \ell(N) + \ell(L) - \epsilon$$
\end{proof}

\section{The Cohomology of Lattices}

In this section, we define a sequence of functions $h = (h^0, h^1, \ldots)$ on the objects of $\Lat$, and show that $h$ is an effaceable numerical $\delta$-functor. We thereby prove that cohomology exists on $\Lat$, in the sense of \ref{def:coh_exists2}. The numerical zig-zag lemma plays a key role.

First, we'll look at how to construct ``tight resolutions'' of any object.

\begin{definition} \label{def:canonical_resolutions}
	For $n \in \{1, 2, 3, \ldots\}$, we define $P_n$ to be the sublattice of $\mathbb{R}^2$ generated by the vectors $(\frac{1}{n}, 0)$ and $(0, \sqrt{\frac{1}{n^2} - \frac{1}{n^4}})$. $P_n$ is clearly a $\frac{4}{n}$-tight lattice. We also have a normal embedding $f_n: \mathbb{Z} \rightarrow P_n$ which sends $1 \in \mathbb{Z}$ to the vector $(\frac{1}{n}, \sqrt{1 - \frac{1}{n^2}})$. We therefore have a short exact sequence
	$$0 \rightarrow \mathbb{Z} \rightarrow P_n \rightarrow P_n / \mathbb{Z} \rightarrow 0.$$
	From \ref{thm:tensor_exact}, tensoring with an arbitrary lattice $L$ yields a short exact sequence
	$$0 \rightarrow L \rightarrow P_n \otimes L \rightarrow (P_n / \mathbb{Z}) \otimes L \rightarrow 0.$$ The latter terms of this short exact sequence yield the following chain complex:
	$$0 \rightarrow P_n \otimes L \rightarrow (P_n / \mathbb{Z}) \otimes L \rightarrow 0 \rightarrow 0 \rightarrow \ldots$$
	We'll call this sequence $R_{(L, n)}$. 
\end{definition}

\begin{remark}
	Our approach will be to define the cohomology of a lattice $L$ in terms of the cohomology of $R_{(L, n)}$ as $n \rightarrow \infty$; this parallels the general tendency in cohomology to define the cohomology of an object in terms of the cohomology of a resolution. Our approach resembles \v{C}ech cohomology particularly closely: we take resolutions of a certain object, which is the identity of the tensor product, and then tensor that resolution with other objects to get resolutions of arbitrary objects. This is essentially how the resolutions in \v{C}ech cohomology are made, although this isn't always stated explicitly.
\end{remark}

\begin{proposition} \label{thm:eventually_tight}
	For any lattice $L$ and any $\delta > 0$, we have for sufficiently large $n$ that $P_n \otimes L$ is $\delta$-tight.
\end{proposition}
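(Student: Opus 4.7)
The plan is to reduce to \ref{thm:tensor_tight}, which is exactly the tool built for this: tensor products multiply tightness constants. The setup already tells us that $P_n$ is $\tfrac{4}{n}$-tight, so the $P_n$ factor can be made as tight as we like; the work is to notice that the other factor $L$, although arbitrary, is itself $\gamma$-tight for some finite $\gamma = \gamma(L)$.

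First I would show that every lattice admits some tightness constant. Let $d$ be the local dimension of $L$ and fix a basis $x_1, \ldots, x_d$ of $L$ (which exists by \ref{def:basis}). Let $B = \max_i \|x_i\|$. Then $L$ is generated by vectors of norm at most $B = \tfrac{Bd^2}{d^2}$, so $L$ is $(Bd^2)$-tight by definition of tightness. Call this constant $\gamma_L := Bd^2$.

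Next, combine with the tightness of $P_n$. From \ref{def:canonical_resolutions}, $P_n$ is $\tfrac{4}{n}$-tight. Applying \ref{thm:tensor_tight} to $P_n$ and $L$ gives that $P_n \otimes L$ is $\tfrac{4\gamma_L}{n}$-tight. Finally, choose $n$ large enough that $\tfrac{4\gamma_L}{n} \leq \delta$, i.e., $n \geq \tfrac{4\gamma_L}{\delta}$; then $P_n \otimes L$ is $\delta$-tight, as desired.

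There is no real obstacle here: the proposition is essentially a bookkeeping consequence of \ref{thm:tensor_tight} together with the observation that an arbitrary lattice has some (possibly large) finite tightness constant, while the $P_n$ tightness constant $\tfrac{4}{n}$ tends to $0$. The only minor care needed is to confirm that the definition of $\delta$-tightness allows $\delta$ to be arbitrarily large (which it does, since the condition is just the existence of a generating set satisfying a size bound), so that writing $L$ as $\gamma_L$-tight is legitimate even when $\gamma_L$ is large.
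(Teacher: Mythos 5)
Your proof is correct and follows essentially the same route as the paper: observe that $L$ is $\gamma$-tight for some finite $\gamma$, combine with the $\tfrac{4}{n}$-tightness of $P_n$ via \ref{thm:tensor_tight}, and take $n \geq \tfrac{4\gamma}{\delta}$. The only difference is that you spell out why $L$ has some finite tightness constant, which the paper asserts as clear.
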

\begin{proof}
	Clearly $L$ is $\gamma$-tight for some $\gamma$. Since $P_n$ is $\frac{4}{n}$-tight, $P_n$ is $\frac{\delta}{\gamma}$-tight for $n \geq \frac{4\gamma}{\delta}$. Then $P_n \otimes L$ is $\delta$-tight from \ref{thm:tensor_tight}.
\end{proof}

\begin{corollary} \label{thm:tight_resolution}
For any lattice $L$, and any $\delta > 0$, there is short exact sequence $0 \rightarrow L \rightarrow M \rightarrow N \rightarrow 0$ with $M$ and $N$ both $\delta$-tight.
\end{corollary}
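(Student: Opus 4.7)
The plan is to use the short exact sequence
$$0 \to L \to P_n \otimes L \to (P_n/\mathbb{Z}) \otimes L \to 0$$
from Definition \ref{def:canonical_resolutions}, and to choose $n$ large enough that both $M := P_n \otimes L$ and $N := (P_n/\mathbb{Z}) \otimes L$ are $\delta$-tight. Tightness of $M$ for sufficiently large $n$ is exactly the content of Proposition \ref{thm:eventually_tight}, so the only real work is to handle $N$.

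For $N$, I would first show that $P_n/\mathbb{Z}$ itself becomes arbitrarily tight as $n \to \infty$, and then combine this with Proposition \ref{thm:tensor_tight}, using the fact that any lattice $L$ is $\gamma$-tight for some $\gamma > 0$ (any basis of maximum norm $r$ exhibits $L$ as at worst $(r\,d^2)$-tight, where $d$ is its local dimension). Since $P_n$ is locally $2$-dimensional and $\mathbb{Z}$ embeds as a rank-$1$ normal sublattice, $P_n/\mathbb{Z}$ is locally $1$-dimensional. Using the generators $v_1 = (1/n, 0)$ and $v_2 = (0, \sqrt{1/n^2 - 1/n^4})$ of $P_n$, together with the relation $v_1 + n v_2 \equiv 0$ in the quotient, the image of $v_2$ already generates $P_n/\mathbb{Z}$, and a short computation of its orthogonal projection away from the line through $(1/n, \sqrt{1-1/n^2})$ gives a vector of norm of order $1/n^2$. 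Hence $P_n/\mathbb{Z}$ is $\delta_n$-tight with $\delta_n \to 0$.

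Given $\delta > 0$, fix $\gamma$ with $L$ being $\gamma$-tight, and then choose $n$ large enough that both $P_n$ and $P_n/\mathbb{Z}$ are $(\delta/\gamma)$-tight; Proposition \ref{thm:tensor_tight} then yields $\delta$-tightness of both $M$ and $N$. The main obstacle is the coordinate computation showing that $P_n/\mathbb{Z}$ becomes arbitrarily tight, but this is routine given the explicit presentation of $P_n$ in Definition \ref{def:canonical_resolutions}; no nontrivial input beyond Propositions \ref{thm:tensor_tight} and \ref{thm:eventually_tight} is needed.
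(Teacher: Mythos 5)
Your proof is correct, but it takes a slightly more laborious path than the paper's. The paper dispatches the tightness of $N$ with a one-line observation: since the local dimension of a quotient is no larger than that of the lattice being quotiented, and the projections of generators have norms no larger than the originals, a quotient of a $\delta$-tight lattice is again $\delta$-tight. Then, using the isomorphism $(P_n/\mathbb{Z}) \otimes L \simeq (P_n \otimes L)/L$ coming from the short exact sequence, $\delta$-tightness of $N$ is an immediate consequence of $\delta$-tightness of $M$, with no coordinate computation and no need to invoke \ref{thm:tensor_tight} a second time. Your approach instead computes explicitly that $P_n/\mathbb{Z}$ is generated by a vector of norm $(1/n^2)\sqrt{1 - 1/n^2}$, so is $\delta_n$-tight with $\delta_n \to 0$, and then applies \ref{thm:tensor_tight} to $P_n/\mathbb{Z}$ and $L$ separately; this works and the numerics you sketch are right, but it duplicates effort and depends on the particular presentation of $P_n$. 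The "quotients of tight lattices are tight" observation is worth extracting, as it is cleaner, more robust, and is a natural companion to \ref{thm:eventually_tight}.
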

\begin{proof}
	We know for sufficiently large $n$ that $P_n \otimes L$ is $\delta$-tight. Furthermore, since quotients of $\delta$-tight lattices are also clearly $\delta$-tight, we must have that $(P_n / \mathbb{Z}) \otimes L \simeq (P_n \otimes L) / L$ is $\delta$-tight in this case. Since we have the short exact sequence
	$$0 \rightarrow L \rightarrow P_n \otimes L \rightarrow (P_n / \mathbb{Z}) \otimes L \rightarrow 0,$$
	we can take $M = P_n \otimes L$ and $N = (P_n / \mathbb{Z}) \otimes L$.
\end{proof}

\begin{proposition} \label{thm:additive_acyclic_1}
	If a lattice $L$ is $\epsilon$-additive, then for all $n$ we have $h_\bullet^1(R_{L, n}) \leq \epsilon$. 
\end{proposition}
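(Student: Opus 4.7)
The plan is to unwind definitions and observe that $h^1_\bullet(R_{L,n})$ is exactly the defect in additivity of $\ell$ on the short exact sequence that generated $R_{L,n}$. There is essentially no obstacle here; the content of the proposition is that the cohomology of the two-term complex $R_{L,n}$ measures precisely how far $\ell$ fails to be additive on $0 \to L \to P_n\otimes L \to (P_n/\mathbb{Z})\otimes L \to 0$.

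Concretely, writing the complex $R_{L,n}$ as $0 \xrightarrow{a_0} A_1 \xrightarrow{a_1} A_2 \xrightarrow{a_2} 0 \xrightarrow{a_3} \ldots$ with $A_1 = P_n \otimes L$ and $A_2 = (P_n/\mathbb{Z}) \otimes L$, I would first identify the two quantities appearing in $h^1_\bullet(R_{L,n}) = \nul(a_2) - \rk(a_1)$. Since $a_2$ is the zero map to $0$, Proposition \ref{thm:zero_kernel} gives $\Ker(a_2) \simeq A_2$, so $\nul(a_2) = \ell((P_n/\mathbb{Z})\otimes L)$. For the morphism $a_1$, Definition \ref{def:canonical_resolutions} tells us it is the cokernel map of the embedding $L \hookrightarrow P_n \otimes L$; by \ref{thm:kernels} the kernel of $a_1$ is (isomorphic to) $L$, so $\nul(a_1) = \ell(L)$, and left-exactness of the rank (\ref{thm:lattice_rank_complete}) yields $\rk(a_1) = \ell(P_n\otimes L) - \ell(L)$.

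Combining these, I would compute
\begin{equation*}
h^1_\bullet(R_{L,n}) \;=\; \ell((P_n/\mathbb{Z})\otimes L) \;-\; \ell(P_n\otimes L) \;+\; \ell(L).
\end{equation*}
The final step is to invoke the hypothesis that $L$ is an $\epsilon$-additive object (Definition \ref{def:additive_object}) applied to the short exact sequence $0 \to L \to P_n\otimes L \to (P_n/\mathbb{Z})\otimes L \to 0$ furnished by \ref{def:canonical_resolutions}. By Definition \ref{def:additive}, $\epsilon$-additivity of that sequence is precisely the inequality $\ell((P_n/\mathbb{Z})\otimes L) - \ell(P_n\otimes L) + \ell(L) \leq \epsilon$, so $h^1_\bullet(R_{L,n}) \leq \epsilon$, as claimed.
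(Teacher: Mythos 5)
Your proof is correct and follows the same route as the paper: the paper likewise invokes $\epsilon$-additivity of $L$ on the short exact sequence $0 \to L \to P_n\otimes L \to (P_n/\mathbb{Z})\otimes L \to 0$ and identifies $h^1_\bullet(R_{L,n})$ with $\ell((P_n/\mathbb{Z})\otimes L) - \ell(P_n\otimes L) + \ell(L)$ via left-exactness. You simply spell out the $\nul(a_2)$ and $\rk(a_1)$ computation in more detail than the paper's one-line appeal to left-exactness.
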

\begin{proof}
	We have a short exact sequence $0 \rightarrow L \rightarrow P_n \otimes L \rightarrow (P_n / \mathbb{Z}) \otimes L \rightarrow 0$. Since $L$ is $\epsilon$-additive, we know
	$$\ell((P_n / \mathbb{Z}) \otimes L) - \ell(P_n \otimes L) + \ell(L) \leq \epsilon.$$
	We also know from left-exactness that the quantity on the left is equal to $h_\bullet^1(R_{L, n})$.
\end{proof}
\begin{definition} \label{def:epsilon_exact}
	A sequence $e_0, e_1, e_2, \ldots$ of real numbers is \textbf{$\epsilon$-exact} if all $e_k$ are nonnegative and we have $\sum\limits_{j=0}^{k} (-1)^j e_{k-j} \geq -\epsilon$ for all $k \geq 0$ (compare with \ref{def:numex}).
\end{definition}
\begin{proposition}  \label{thm:eventually_exact}
	Suppose $0 \rightarrow L \rightarrow M \rightarrow N \rightarrow 0$ is a short exact sequence of lattices. For any $\epsilon > 0$, the following sequence is $\epsilon$-exact for sufficiently large $n$:
	$$h_\bullet^0(R_{L, n}), h_\bullet^0(R_{M, n}), h_\bullet^0(R_{N, n}), h_\bullet^1(R_{L, n}), h_\bullet^1(R_{M, n}), h_\bullet^1(R_{N, n}), 0, 0, \ldots$$
\end{proposition}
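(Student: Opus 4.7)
The plan is to apply the numerical zig-zag lemma (\ref{thm:zigzag}) to a diagram of chain complexes assembled from the three resolutions. First, by tensoring the short exact sequence $0 \to \mathbb{Z} \to P_n \to P_n/\mathbb{Z} \to 0$ with $0 \to L \to M \to N \to 0$ and invoking \ref{thm:tensor_exact}, I would produce a commutative diagram whose rows are $R_{L,n}$, $R_{M,n}$, $R_{N,n}$ and whose only nontrivial columns are the two short exact sequences
$$0 \to P_n \otimes L \to P_n \otimes M \to P_n \otimes N \to 0,$$
$$0 \to (P_n/\mathbb{Z}) \otimes L \to (P_n/\mathbb{Z}) \otimes M \to (P_n/\mathbb{Z}) \otimes N \to 0.$$

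Given $\epsilon > 0$, set $\epsilon' := \epsilon / 7$. By \ref{thm:tight_additive}, fix $\delta > 0$ such that every $\delta$-tight lattice is $\epsilon'$-additive. By \ref{thm:eventually_tight}, $P_n \otimes L$ is $\delta$-tight for all sufficiently large $n$, and the quotient $(P_n/\mathbb{Z}) \otimes L \simeq (P_n \otimes L)/L$ is then $\delta$-tight as well (quotients of $\delta$-tight lattices are clearly $\delta$-tight, as used in the proof of \ref{thm:tight_resolution}). Hence the leftmost entries of the two nontrivial columns are $\epsilon'$-additive objects, so each such column is an $\epsilon'$-additive sequence in the sense of \ref{def:additive}; the remaining columns are $0 \to 0 \to 0$ and are trivially $0$-additive. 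The whole picture is therefore an $\epsilon'$-additive sequence of chain complexes in the sense of section 6.

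Applying the numerical zig-zag lemma yields, for the sequence $(e_k) = (h_\bullet^0(R_{L,n}), h_\bullet^0(R_{M,n}), h_\bullet^0(R_{N,n}), h_\bullet^1(R_{L,n}), \ldots)$, the inequality $\sum_{j=0}^k (-1)^j e_{k-j} \geq -(k+1)\epsilon'$ for every $k \geq 0$. Since $e_k = 0$ for $k \geq 6$, the inequalities at $k \leq 6$ are each bounded below by $-7\epsilon' = -\epsilon$. For $k > 6$ the alternating sum is $\pm \chi$, where $\chi := e_0 - e_1 + e_2 - e_3 + e_4 - e_5$; the $k = 5$ and $k = 6$ bounds together force $|\chi| \leq 7\epsilon' = \epsilon$, so the tail inequalities are automatic. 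This establishes the required $\epsilon$-exactness.

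The main obstacle I expect is the second paragraph: verifying that the columns really are $\epsilon'$-additive. This requires stitching together the exactness of the tensor product (\ref{thm:tensor_exact}), the stability of tightness under tensor products (\ref{thm:tensor_tight}), and the central estimate of the previous section (\ref{thm:tight_additive}) that tightness implies additivity. Once those ingredients are in place the zig-zag lemma does the heavy lifting, and the bookkeeping with alternating sums past the end of the nonzero portion of $(e_k)$ is routine.
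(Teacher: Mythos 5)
Your proof is correct and matches the paper's argument step for step: tensor the resolution of $\mathbb{Z}$ with the short exact sequence, use \ref{thm:tensor_exact} for exactness of the columns, use \ref{thm:eventually_tight} and \ref{thm:tight_additive} to get $\epsilon/7$-additivity of the columns for large $n$, and finish with the numerical zig-zag lemma. The paper chooses $\gamma = \epsilon/7$ for the same reason you do and then simply asserts ``the result follows from \ref{thm:zigzag}''; your final paragraph spells out the bookkeeping that this assertion elides, namely that the zig-zag bound $-(k+1)\gamma$ is not uniform in $k$, so one must note that for $k \leq 6$ it is at worst $-\epsilon$, and that for $k > 6$ the alternating sum collapses to $\pm\chi$ whose absolute value is controlled by the $k=5$ and $k=6$ cases. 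That extra explanation is a genuine improvement in clarity over the paper's terse conclusion.
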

\begin{proof}
	Let $\gamma = \frac{\epsilon}{7}$. From \ref{thm:tight_additive}, we can pick a $\delta$ so that any $\delta$-tight lattice is $\gamma$-additive.	
	
	For any $n$, we can obtain the following commutative diagram by tensoring the chain complex $0 \rightarrow P_n \rightarrow P_n / \mathbb{Z} \rightarrow 0 \rightarrow 0 \ldots$ with the short exact sequence $0 \rightarrow L \rightarrow M \rightarrow N \rightarrow 0$:
	
	\begin{tikzpicture}
	\matrix (m) [
	matrix of math nodes,
	row sep=2.0em,
	column sep=2.0em,
	text height=1.5ex, text depth=0.25ex
	]
	{ 0 & P_n \otimes N & (P_n / \mathbb{Z}) \otimes N & 0 & 0 & \ldots\\
		0 & P_n \otimes M & (P_n / \mathbb{Z}) \otimes M & 0 & 0 & \ldots\\
		0 & P_n \otimes L & (P_n / \mathbb{Z}) \otimes L & 0 & 0 & \ldots\\
	};
	
	\path[overlay,->, font=\scriptsize,>=latex]
	(m-1-1) edge (m-1-2)
	(m-1-2) edge (m-1-3)
	(m-1-3) edge (m-1-4)
	(m-1-4) edge (m-1-5)
	(m-1-5) edge (m-1-6)
	(m-2-1) edge (m-2-2)
	(m-2-2) edge (m-2-3)
	(m-2-3) edge (m-2-4)
	(m-2-4) edge (m-2-5)
	(m-2-5) edge (m-2-6)
	(m-3-1) edge (m-3-2)
	(m-3-2) edge (m-3-3)
	(m-3-3) edge (m-3-4)
	(m-3-4) edge (m-3-5)
	(m-3-5) edge (m-3-6)
	(m-2-1) edge (m-1-1)
	(m-2-2) edge (m-1-2)
	(m-2-3) edge (m-1-3)
	(m-2-4) edge (m-1-4)
	(m-2-5) edge (m-1-5)
	(m-3-1) edge (m-2-1)
	(m-3-2) edge (m-2-2)
	(m-3-3) edge (m-2-3)
	(m-3-4) edge (m-2-4)
	(m-3-5) edge (m-2-5);
	\end{tikzpicture}
	
	We have from \ref{thm:tensor_exact} that the columns of this diagram are short exact sequences. From \ref{thm:eventually_tight}, and the fact that quotients of $\delta$-tight lattices are $\delta$-tight, we must have that the lattices on the bottom row are $\delta$-tight for sufficiently large $n$. In this case, the diagram above must be a $\gamma$-additive sequence of chain complexes from our choice of $\delta$. The result then follows from \ref{thm:zigzag}, the numerical zig-zag lemma.
\end{proof}

\begin{proposition} \label{thm:h1_converges}
	For any lattice $L$, the sequence $h_\bullet^1(R_{L, 1}), h_\bullet^1(R_{L, 2}), h_\bullet^1(R_{L, 3}), \ldots$ converges.
\end{proposition}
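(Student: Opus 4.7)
The plan is to show that the sequence is Cauchy, and hence convergent. The key idea is that for any sufficiently large $n_{*}$, both $B := P_{n_{*}} \otimes L$ and $C := (P_{n_{*}} / \mathbb{Z}) \otimes L$ become simultaneously tight; this lets us use the short exact sequence $0 \to L \to B \to C \to 0$ together with the zig-zag comparison in \ref{thm:eventually_exact} to pin down $h_\bullet^1(R_{L, n})$, for all large $n$, to within $O(\epsilon)$ of a constant that does not depend on $n$.

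Concretely, I fix $\epsilon > 0$ and set $\gamma = \epsilon / 4$. First, use \ref{thm:tight_additive} to pick $\delta > 0$ so that every $\delta$-tight lattice is $\gamma$-additive. Next, using \ref{thm:eventually_tight} together with the observation (noted in the proof of \ref{thm:tight_resolution}) that quotients of $\delta$-tight lattices are $\delta$-tight, choose $n_{*}$ large enough that both $B$ and $C$ are $\delta$-tight, and hence $\gamma$-additive. By \ref{thm:additive_acyclic_1} this yields the uniform bounds $h_\bullet^1(R_{B, n}), h_\bullet^1(R_{C, n}) \in [0, \gamma]$ for every $n$.

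Now apply \ref{thm:eventually_exact} to the fixed short exact sequence $0 \to L \to B \to C \to 0$: there exists $N$ such that, for all $n \geq N$, the sequence
$$\ell(L),\ \ell(B),\ \ell(C),\ h_\bullet^1(R_{L, n}),\ h_\bullet^1(R_{B, n}),\ h_\bullet^1(R_{C, n}),\ 0,\ 0,\ \ldots$$
is $\gamma$-exact. The $k = 5$ and $k = 6$ inequalities from the definition of $\gamma$-exactness combine into the two-sided estimate
$$\bigl|\,h_\bullet^1(R_{L, n}) - \bigl(\ell(L) - \ell(B) + \ell(C)\bigr) - h_\bullet^1(R_{B, n}) + h_\bullet^1(R_{C, n})\,\bigr| \leq \gamma,$$
and combining this with the uniform bounds on $h_\bullet^1(R_{B, n})$ and $h_\bullet^1(R_{C, n})$ gives $|h_\bullet^1(R_{L, n}) - K| \leq 2\gamma$ for every $n \geq N$, where $K := \ell(L) - \ell(B) + \ell(C)$ is a single constant independent of $n$. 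Hence $|h_\bullet^1(R_{L, n}) - h_\bullet^1(R_{L, n'})| \leq 4\gamma = \epsilon$ whenever $n, n' \geq N$, so the sequence is Cauchy and thus convergent.

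The main obstacle is keeping the order of quantifiers straight: one must commit to a single reference short exact sequence $0 \to L \to B \to C \to 0$ by first fixing a sufficiently large $n_{*}$, and only afterwards let the resolution parameter $n$ vary. Once the reference $B$ and $C$ are tight enough to be nearly acyclic in every $R_{-, n}$, the numerical zig-zag argument packaged inside \ref{thm:eventually_exact} traps the tail of $h_\bullet^1(R_{L, n})$ in a narrow window around the Euler-characteristic-like quantity $\ell(L) - \ell(B) + \ell(C)$, and the Cauchy conclusion is immediate.
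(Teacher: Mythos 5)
Your argument is correct and is essentially the paper's own proof: both show the sequence is Cauchy by first fixing a tight short exact sequence $0 \to L \to M \to N \to 0$ (you build it directly from \ref{thm:eventually_tight}, the paper cites \ref{thm:tight_resolution}, which is the same construction), and then using \ref{thm:eventually_exact} together with \ref{thm:additive_acyclic_1} to trap $h_\bullet^1(R_{L,n})$ within a $2\gamma$-window around the constant $\ell(L) - \ell(M) + \ell(N)$ for all large $n$. The only cosmetic differences are that you spell out the $k=5,6$ inequalities explicitly and rescale $\gamma = \epsilon/4$ so the final bound is exactly $\epsilon$ rather than $4\epsilon$.
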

\begin{proof}
	Our strategy will be to show the sequence converges by showing that it's Cauchy.
	
	Fix an arbitrary $\epsilon > 0$. From \ref{thm:tight_additive} and \ref{thm:tight_resolution}, we can find a short exact sequence $0 \rightarrow L \rightarrow M \rightarrow N \rightarrow 0$ in which $M$ and $N$ are both $\epsilon$-additive.
	
	From \ref{thm:eventually_exact}, we have the following for sufficiently large $n$:
	$$|h_\bullet^0(R_{L, n}) - h_\bullet^0(R_{M, n}) + h_\bullet^0(R_{N, n}) - h_\bullet^1(R_{L, n}) + h_\bullet^1(R_{M, n}) - h_\bullet^1(R_{N, n})| \leq \epsilon$$
	From \ref{thm:additive_acyclic_1}, we know both $h_\bullet^1(R_{M, n})$ and $h_\bullet^1(R_{(N, n})$ are in $[0, \epsilon]$. Combining this knowledge with the previous equation, we have
	$$|h_\bullet^0(R_{L, n}) - h_\bullet^0(R_{M, n}) + h_\bullet^0(R_{N, n}) - h_\bullet^1(R_{L, n})| \leq 2\epsilon,$$
	or equivalently
	$$h_\bullet^1(R_{L, n}) \in [\ell(L) - \ell(M) + \ell(N) - 2\epsilon, \ell(L) - \ell(M) + \ell(N)  + 2\epsilon].$$
	Since this last statement holds for all sufficiently large $n$, we have for all sufficiently large $n$ and $n'$ that $|h_\bullet^1(R_{L, n}) - h_\bullet^1(R_{L, n'})| \leq 4\epsilon$. Since this holds for any $\epsilon > 0$, the sequence $h_\bullet^1(R_{L, 1}), h_\bullet^1(R_{L, 2}), h_\bullet^1(R_{L, 3}), \ldots$ is Cauchy. Therefore, it converges.
\end{proof}

\begin{definition} \label{def:lattice_cohomology}
	For any lattice $L$ and any $k \in \{0, 1, 2, \ldots\}$, we define $h^k(L)$ to be $\lim_{n\to\infty}h_\bullet^k(R_{L, n})$. This clearly converges for $k = 0$ since the sequence is constantly $\ell(L)$; it also converges for $k > 1$ as the sequence is constantly $0$, and it converges for $k = 1$ from \ref{thm:h1_converges}.
\end{definition}

\begin{proposition} \label{thm:is_delta_functor}
	The sequence of functions $h := (h^0, h^1, h^2, \ldots)$ on the objects of $\Lat$ is a numerical $\delta$-functor.
\end{proposition}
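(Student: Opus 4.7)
The strategy is to pass to the limit $n \to \infty$ in the $\epsilon$-exactness statement of \ref{thm:eventually_exact} and then let $\epsilon \to 0$. Everything reduces to the observation that an alternating sum of finitely many terms commutes with termwise limits.

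More concretely, fix a short exact sequence $0 \to L \to M \to N \to 0$ in $\Lat$, and set
$$(e_k)_{k=0}^\infty = h^0(L), h^0(M), h^0(N), h^1(L), h^1(M), h^1(N), 0, 0, \ldots,$$
together with the finite-$n$ approximation $(e_k^{(n)})_{k=0}^\infty$ obtained by replacing each $h^k(X)$ with $h_\bullet^k(R_{X, n})$. By \ref{def:lattice_cohomology} we have $e_k^{(n)} \to e_k$ pointwise in $k$ as $n \to \infty$, and since each $e_k^{(n)} \geq 0$ by \ref{def:coh_bullet}, so too is each $e_k$.

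Now fix $k \geq 0$ and $\epsilon > 0$. By \ref{thm:eventually_exact}, the sequence $(e_k^{(n)})$ is $\epsilon$-exact for all sufficiently large $n$, so in particular
$$\sum_{j=0}^{k} (-1)^j e_{k-j}^{(n)} \geq -\epsilon.$$
As the left-hand side is a fixed finite linear combination, letting $n \to \infty$ yields $\sum_{j=0}^{k} (-1)^j e_{k-j} \geq -\epsilon$. Since $\epsilon > 0$ was arbitrary, the alternating sum is $\geq 0$, which is exactly the condition of \ref{def:numex}.

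I do not anticipate any real obstacle: the substantive content has already been absorbed into \ref{thm:eventually_exact} (which is where the numerical zig-zag lemma is brought to bear on a tight resolution) and into \ref{thm:h1_converges} (which guarantees that the limits defining $h^1$ exist in the first place). The only point worth a sanity check is that \ref{thm:eventually_exact} is stated for all $k \geq 0$, not merely for the nontrivial range $k \leq 5$; this ensures that the alternating-sum identity for $k \geq 6$ (which amounts to additivity of $\chi$ along the short exact sequence) is already encoded in the finite-$n$ statement, so no separate argument for Euler characteristics is needed.
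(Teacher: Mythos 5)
Your proof is correct and takes essentially the same route as the paper, which simply states that the result ``follows immediately from \ref{thm:eventually_exact}''; you have spelled out the limit-passing argument that the paper leaves implicit.
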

\begin{proof}
	This follows immediately from \ref{thm:eventually_exact}.
\end{proof}

\begin{proposition} \label{thm:additive_acyclic_2}
	If a lattice $L$ is $\epsilon$-additive, then $h^1(L) \leq \epsilon$
\end{proposition}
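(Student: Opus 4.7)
The plan is essentially a one-line deduction from results already in hand. By Definition \ref{def:lattice_cohomology}, $h^1(L)$ is defined as $\lim_{n \to \infty} h_\bullet^1(R_{L,n})$, and the existence of this limit was established in Proposition \ref{thm:h1_converges}. Meanwhile, Proposition \ref{thm:additive_acyclic_1} already gives a uniform upper bound: when $L$ is $\epsilon$-additive, $h_\bullet^1(R_{L,n}) \leq \epsilon$ for every $n$. So the whole proof is: pass to the limit in the inequality $h_\bullet^1(R_{L,n}) \leq \epsilon$ to obtain $h^1(L) \leq \epsilon$.

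I do not anticipate any obstacle, since the substantive work has been done earlier. In particular, the key step is really Proposition \ref{thm:additive_acyclic_1}, where the short exact sequence $0 \to L \to P_n \otimes L \to (P_n/\mathbb{Z}) \otimes L \to 0$ together with left-exactness lets us rewrite $h_\bullet^1(R_{L,n})$ as $\ell((P_n/\mathbb{Z}) \otimes L) - \ell(P_n \otimes L) + \ell(L)$, which is bounded by $\epsilon$ precisely because $L$ is $\epsilon$-additive as an object of $\Lat$. The present statement just records the corresponding statement for $h^1$ itself, which is the quantity we actually care about as ``the'' cohomology of the lattice. So the proof I would write is a two-sentence appeal to \ref{thm:additive_acyclic_1} and \ref{def:lattice_cohomology}, noting that non-strict inequalities are preserved under limits.
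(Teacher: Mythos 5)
Your proposal is correct and matches the paper's proof exactly: the paper also deduces this immediately from Proposition \ref{thm:additive_acyclic_1} together with the definition of $h^1(L)$ as $\lim_{n\to\infty} h_\bullet^1(R_{L,n})$, using that non-strict inequalities pass to the limit.
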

\begin{proof}
	This follows immediately from \ref{thm:additive_acyclic_1}.
\end{proof}
\begin{corollary} \label{thm:tight_acyclic}
	For any $\epsilon > 0$, there exists a $\delta > 0$ such that, for any $\delta$-tight lattice $L$, we have $h^1(L) \leq \epsilon$.
\end{corollary}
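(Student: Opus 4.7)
The plan is to combine the two immediately preceding results in a straightforward way: Theorem \ref{thm:tight_additive} converts tightness into $\epsilon$-additivity, and Proposition \ref{thm:additive_acyclic_2} converts $\epsilon$-additivity into the desired bound on $h^1$. So the corollary is essentially just a restatement of their composition.

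Concretely, given an arbitrary $\epsilon > 0$, I would first invoke \ref{thm:tight_additive} to extract a $\delta > 0$ with the property that every $\delta$-tight lattice is $\epsilon$-additive. I would then take this same $\delta$ as the one promised by the corollary. For any $\delta$-tight lattice $L$, the choice of $\delta$ guarantees that $L$ is $\epsilon$-additive, and applying \ref{thm:additive_acyclic_2} to $L$ then yields $h^1(L) \leq \epsilon$, which is exactly what we need.

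There is no real obstacle here; both ingredients have already been developed in sections 9 and 10. The only thing worth remarking is that the quantifier structure lines up perfectly: the $\delta$ produced by \ref{thm:tight_additive} depends only on $\epsilon$ (not on $L$), so the same $\delta$ works uniformly for every $\delta$-tight lattice, which is exactly the form the corollary demands. The proof will therefore be a one-line chaining of the two cited results.
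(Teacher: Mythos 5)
Your proposal is correct and matches the paper's proof exactly: the paper also cites \ref{thm:additive_acyclic_2} and \ref{thm:tight_additive} and chains them in precisely this way. Your remark about the quantifier structure (that $\delta$ depends only on $\epsilon$, not on $L$) is a helpful clarification of why the chaining is legitimate, though the paper leaves it implicit.
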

\begin{proof}
	This follows immediately from \ref{thm:additive_acyclic_2} and \ref{thm:tight_additive}.
\end{proof}

\begin{proposition} \label{thm:is_effaceable}
	The numerical $\delta$-functor $h = (h^0, h^1, h^2, \ldots)$ is effaceable.
\end{proposition}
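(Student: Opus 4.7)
The plan is straightforward: effaceability of $h$ reduces, by the very definition of $h$, to showing effaceability of $h^1$ (since $h^k$ is identically zero for $k \geq 2$ by construction, the chain complex $R_{L,n}$ has nontrivial terms only in degrees $0$ and $1$, so $h^k_\bullet(R_{L,n}) = 0$ for $k \geq 2$ and hence $h^k(L) = 0$ for $k \geq 2$). So the whole task is: given a lattice $A$ and $\epsilon > 0$, produce a short exact sequence $0 \to A \to B \to C \to 0$ with $h^1(B) \leq \epsilon$.

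First I would invoke \ref{thm:tight_acyclic} to obtain a $\delta > 0$ with the property that every $\delta$-tight lattice $L$ satisfies $h^1(L) \leq \epsilon$. Then I would apply \ref{thm:tight_resolution} to $A$ with this same $\delta$: this yields a short exact sequence $0 \to A \to M \to N \to 0$ in which $M$ (and also $N$, though that is not needed here) is $\delta$-tight. Taking $B := M$, we have $h^1(B) \leq \epsilon$ by our choice of $\delta$, which is exactly the effaceability condition for $h^1$ at $A$.

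There is essentially no obstacle: all the real work has already been done in the preceding results. The tight resolution \ref{thm:tight_resolution} embeds any lattice into a $\delta$-tight one for arbitrary $\delta$, and the acyclicity statement \ref{thm:tight_acyclic} says that sufficient tightness controls $h^1$; effaceability is just the composition of these two facts. The only thing worth flagging in the write-up is the remark that $h^i \equiv 0$ for $i \geq 2$ makes the effaceability condition vacuous in those degrees, so one really does only need to check $i = 1$.
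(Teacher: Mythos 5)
Your proof is correct and follows the same route as the paper: observe that $h^k \equiv 0$ for $k \geq 2$ makes effaceability automatic in those degrees, then combine \ref{thm:tight_acyclic} with \ref{thm:tight_resolution} to embed any lattice in a $\delta$-tight one with $h^1 \leq \epsilon$.
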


\begin{proof}
	The functions $h^k$ for $k > 1$ are clearly effaceable since they are constantly $0$. Therefore, it suffices to show that, for any lattice $L$ and any $\epsilon > 0$, we can find a short exact sequence $0 \rightarrow L \rightarrow N \rightarrow M \rightarrow 0$ with $h^1(N) < \epsilon$. This is an immediate consequence of \ref{thm:tight_acyclic} and \ref{thm:tight_resolution}.
\end{proof}

\begin{theorem} \label{thm:coh_exists2}
	Cohomology exists on $\Lat$.
\end{theorem}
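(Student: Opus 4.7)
The plan is to simply assemble the pieces already established in this section. By \ref{def:coh_exists2}, cohomology exists on $\Lat$ precisely when cohomology exists for the triple $(O, E, \ell)$, where $O$ is the class of lattices, $E$ is the class of triples coming from short exact sequences in $\Lat$, and $\ell$ is the shorthand from \ref{def:ell} for $\rk(\mathrm{id}_{-})$. Unwinding \ref{def:coh_exists}, it therefore suffices to exhibit an effaceable numerical $\delta$-functor $T = (T^0, T^1, \ldots)$ with $T^0 = \ell$; uniqueness of any such $T$ is already guaranteed by \ref{thm:eff_equal}.

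The natural candidate is the sequence $h = (h^0, h^1, h^2, \ldots)$ constructed in \ref{def:lattice_cohomology}. I would first note that $h^0 = \ell$ is immediate: by construction $h^0(L) = \lim_{n\to\infty} h_\bullet^0(R_{L,n})$, and this sequence is constantly $\ell(L)$ (the chain complex $R_{L,n}$ begins in degree $0$ with $P_n \otimes L$ mapping to $(P_n/\mathbb{Z})\otimes L$, and $h_\bullet^0$ is the nullity of that first arrow, which by left-exactness of $\rk$ recovers $\ell(L)$).

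Next I would invoke \ref{thm:is_delta_functor} to conclude that $h$ is a numerical $\delta$-functor on the objects of $\Lat$, and \ref{thm:is_effaceable} to conclude that it is effaceable. Taken together, these two propositions say exactly that $h$ meets all the hypotheses of \ref{def:coh_exists}, so cohomology exists on $\Lat$ with $h$ as its witness.

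There is no real obstacle here: all of the work has been pushed into the preparatory propositions. The only thing to be careful about is that ``$h^0 = \ell$'' is verified against the precise meaning of $\ell$ in a left-exact ranked category (i.e.\ $\ell(L) = \rk(\mathrm{id}_L)$), rather than merely against the informal description of $\ell(L)$ as $\ln(\rho(L))$; both agree by the definition of $\rk$ on $\Lat$ given at the start of section 8, since $\ker(\mathrm{id}_L) = 0$ gives $\rk(\mathrm{id}_L) = \ell(L) - \ell(0) = \ln(\rho(L))$. After that sanity check, the theorem follows in one sentence.
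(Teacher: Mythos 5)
Your proposal is correct and takes essentially the same route as the paper: the theorem is a corollary of \ref{thm:is_delta_functor} and \ref{thm:is_effaceable}, once one checks that $h^0 = \ell$. The one small quibble is your parenthetical attribution of $h_\bullet^0(R_{L,n}) = \ell(L)$ to left-exactness of $\rk$; in fact it just follows from $\rk(a_0)=0$ together with $\Ker(a_1) \simeq L$ (the embedding $L \hookrightarrow P_n \otimes L$ is a kernel of $a_1$) and isomorphism-invariance of $\ell$, with no appeal to left-exactness needed.
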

\begin{proof}
	This follows immediately from \ref{thm:is_delta_functor} and \ref{thm:is_effaceable}.
\end{proof}

\begin{remark}
	We could also prove \ref{thm:coh_exists2} by defining $h$ by the explicit formulas given in \ref{thm:explicit_cohomology}, rather than defining it via resolutions and then deriving the explicit formulas after. It can then be shown that this is a numerical $\delta$-functor by proving each of the required inequalities separately (there are fairly straightforward elementary proofs of each one). Effaceability can still be shown using \ref{thm:coset_mass_4} and the same short exact sequence used in \ref{thm:is_effaceable}. This approach wouldn't require the numerical zig-zag lemma, and would probably be simpler overall. However, the approach given here is less ad-hoc, more closely resembles other approaches to cohomology, and shows greater promise of generalizing to other contexts. In fact, as mentioned previously, it is already known to the author how the numerical cohomology of coherent sheaves on a projective scheme over a field can be defined from scratch (as opposed to taking the usual cohomology for granted, as in section 5), in way that parallels the approach given here. This works for projective schemes of any dimension. This will possibly be the topic of a future paper. The higher-dimensional arithmetic case is still open.
\end{remark}

\section{Explicit Description of Cohomology}

In this section, we work out what $h^k(L)$ is for a lattice $L$. We trivially have, for any lattice $L$, that $h^0(L) = \ell(L) = \ln(\rho(L))$ and $h^k(L) = 0$ for $k > 1$. The remaining case is $k = 1$. We'll find that our definition agrees with the one given in \cite{VDGS}.

\begin{definition} \label{def:chi_lattice}
	Since we have the numerical $\delta$-functor $h$ on $\Lat$, we can define for any lattice $L$ the quantity $\chi(L) = \chi_h(L) = h^0(L) - h^1(L)$ (recall \ref{def:chi}).
\end{definition}

\begin{proposition} \label{thm:tight_volume_chi}
	For any $\epsilon > 0$, we can find a $\delta$ so that any $\delta$-tight lattice $L$ satisfies $|\chi(L) + \ln(\vol(L))| \leq \epsilon$
\end{proposition}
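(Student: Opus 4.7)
The plan is to combine two tightness estimates already in hand: Theorem \ref{thm:coset_mass_4} (which controls $\ln(\vol(L)\rho(L+x))$ for $\delta$-tight $L$) and Corollary \ref{thm:tight_acyclic} (which controls $h^1(L)$ for $\delta$-tight $L$). Unpacking the definitions, $\chi(L) + \ln(\vol(L)) = h^0(L) - h^1(L) + \ln(\vol(L)) = \ell(L) + \ln(\vol(L)) - h^1(L)$, so the triangle inequality gives
\[
|\chi(L) + \ln(\vol(L))| \leq |\ell(L) + \ln(\vol(L))| + h^1(L).
\]
Thus it suffices to bound each summand on the right by $\epsilon/2$ for sufficiently tight $L$.

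First I would apply Theorem \ref{thm:coset_mass_4} with tolerance $\epsilon/2$ to obtain a $\delta_1 > 0$ such that every $\delta_1$-tight lattice $L$ satisfies $|\ln(\vol(L)\rho(L+x))| < \epsilon/2$ for all $x \in \spn(L)$. Specializing to $x = 0$ and noting $\ell(L) = \ln(\rho(L))$, this gives $|\ell(L) + \ln(\vol(L))| < \epsilon/2$. Next I would apply Corollary \ref{thm:tight_acyclic} with tolerance $\epsilon/2$ to obtain a $\delta_2 > 0$ such that every $\delta_2$-tight lattice $L$ satisfies $h^1(L) \leq \epsilon/2$.

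Finally, take $\delta = \min(\delta_1, \delta_2)$. Any $\delta$-tight lattice $L$ is then both $\delta_1$-tight and $\delta_2$-tight, so the displayed inequality above yields $|\chi(L) + \ln(\vol(L))| \leq \epsilon/2 + \epsilon/2 = \epsilon$, as required. There is no real obstacle here — this is essentially a bookkeeping combination of two previously established tightness estimates, and the only thing to keep straight is the sign of $\ln(\vol(L))$ relative to $\ell(L)$ (which comes out right because $\ln(\vol(L)\rho(L)) = \ln(\vol(L)) + \ell(L)$).
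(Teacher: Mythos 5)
Your proposal is correct and matches the paper's proof essentially line for line: both split $\chi(L) + \ln(\vol(L))$ as $(h^0(L) + \ln(\vol(L))) - h^1(L)$, bound the first term by $\epsilon/2$ via Theorem \ref{thm:coset_mass_4} (specialized to $x=0$) and the second by $\epsilon/2$ via Corollary \ref{thm:tight_acyclic}, and combine with the triangle inequality. Taking $\delta = \min(\delta_1,\delta_2)$ is the same move the paper makes implicitly when it says to choose $\delta$ small enough that both properties hold.
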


\begin{proof}
	The idea is that, for small $\delta$, both $\chi(L)$ and $-\ln(\vol(L))$ are approximated by $h^0(L) = \ell(L)$.
	More precisely, we have from \ref{thm:tight_acyclic} that $h^1(L) \leq \epsilon/2$ for sufficiently small $\delta$. Also, from \ref{thm:coset_mass_4} we have that $|h^0(L) + \ln(\vol(L))| \leq \epsilon/2$ for sufficiently small $\delta$. Choose a $\delta$ small enough that both these properties hold. We then have:
	$$|\chi(L) + \ln(\vol(L))| = |h^0(L) - h^1(L) + \ln(\vol(L))|$$
	$$\leq |h^0(L) + \ln(\vol(L))| + h^1(L)$$
	$$\leq \frac{\epsilon}{2} + \frac{\epsilon}{2} = \epsilon$$
\end{proof}

\begin{proposition} \label{thm:volume_chi}
	For any lattice $L$, $\chi(L) = -\ln(\vol(L))$
\end{proposition}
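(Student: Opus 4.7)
The plan is to combine the additivity of $\chi$ (Theorem \ref{thm:chi_additive}), the multiplicativity of $\vol$ on short exact sequences (Proposition \ref{thm:exact_volume}), and the fact that $\chi(L) + \ln(\vol(L))$ is small for tight lattices (Proposition \ref{thm:tight_volume_chi}) to squeeze $\chi(L) + \ln(\vol(L))$ to $0$ for every $L$.

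Let $f(L) = \chi(L) + \ln(\vol(L))$. First I would observe that $f$ is additive on short exact sequences: if $0 \to A \to B \to C \to 0$ is short exact, then Theorem \ref{thm:chi_additive} gives $\chi(B) = \chi(A) + \chi(C)$, and Proposition \ref{thm:exact_volume} gives $\ln(\vol(B)) = \ln(\vol(A)) + \ln(\vol(C))$. Adding these, $f(B) = f(A) + f(C)$.

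Now fix an arbitrary lattice $L$ and an arbitrary $\epsilon > 0$. Use Proposition \ref{thm:tight_volume_chi} to choose $\delta > 0$ so that every $\delta$-tight lattice $K$ satisfies $|f(K)| \leq \epsilon/2$. By Corollary \ref{thm:tight_resolution}, there is a short exact sequence $0 \to L \to M \to N \to 0$ with both $M$ and $N$ being $\delta$-tight. Applying additivity of $f$ and the triangle inequality,
\[
|f(L)| = |f(M) - f(N)| \leq |f(M)| + |f(N)| \leq \epsilon.
\]
Since $\epsilon > 0$ was arbitrary, $f(L) = 0$, i.e. $\chi(L) = -\ln(\vol(L))$.

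There is no real obstacle here: all the work has been done in assembling $\chi$-additivity, $\vol$-multiplicativity, tight-resolution existence, and the tight case of the identity. The only thing to be mildly careful about is that the tight resolution produces tight lattices on both sides (the middle and the quotient), so that a single application of additivity reduces the general case to two tight cases.
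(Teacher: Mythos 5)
Your proof is correct and is essentially the same as the paper's: define the defect $\chi + \ln \circ \vol$, use \ref{thm:tight_resolution} to fit $L$ into a short exact sequence with two $\delta$-tight lattices, and combine \ref{thm:chi_additive}, \ref{thm:exact_volume}, and \ref{thm:tight_volume_chi} to squeeze the defect to zero. The only difference is cosmetic (you name the defect $f$ and bound by $\epsilon/2$ where the paper bounds by $\epsilon$ and concludes $\leq 2\epsilon$).
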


\begin{proof}
	 Pick an $\epsilon > 0$. From \ref{thm:tight_volume_chi}, we can find a $\delta$ so that any $\delta$-tight lattice $M$ satisfies $|\chi(M) + \ln(\vol(M))| \leq \epsilon$. Furthermore, from \ref{thm:tight_resolution}, we have a short exact sequence $0 \rightarrow L \rightarrow M \rightarrow N \rightarrow 0$ with $M$ and $N$ both $\delta$-tight. We have both the following, from \ref{thm:chi_additive} and \ref{thm:exact_volume}:
	$$\chi(L) = \chi(M) - \chi(N)$$
	$$\ln(\vol(L)) = \ln(\vol(M)) - \ln(\vol(N))$$
	
	We can then add these equations, apply the triangle inequality and the bounds we get from our choice of $\delta$:
	
	$$|\chi(L) + \ln(\vol(L))|\leq |\chi(M) + \ln(\vol(M))| + |\chi(N) + \ln(\vol(N))|$$
	$$\leq \epsilon + \epsilon = 2\epsilon$$
	
	Since $|\chi(L) + \ln(\vol(L))| \leq 2\epsilon$ holds for any $\epsilon > 0$, we must have $\chi(L) = -\ln(\vol(L))$.
\end{proof}

\begin{corollary} \label{thm:explicit_h1}
	$h^1(L) = \ln(\rho(L)) + \ln(\vol(L))$
\end{corollary}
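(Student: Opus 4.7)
The plan is to chain together three facts already established in the paper: the explicit formula $h^0(L) = \ell(L) = \ln(\rho(L))$ noted at the start of section 11, the definition $\chi(L) = h^0(L) - h^1(L)$ from \ref{def:chi_lattice} (which is just \ref{def:chi} specialized to $\Lat$, using the fact that $h^k(L) = 0$ for $k \geq 2$), and Proposition \ref{thm:volume_chi}, which identifies $\chi(L)$ with $-\ln(\vol(L))$.

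First I would rearrange the definition of $\chi$ as $h^1(L) = h^0(L) - \chi(L)$. Then I would substitute $\chi(L) = -\ln(\vol(L))$ from \ref{thm:volume_chi}, obtaining $h^1(L) = h^0(L) + \ln(\vol(L))$. Finally, plugging in $h^0(L) = \ln(\rho(L))$ gives the claim.

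There is no real obstacle here; the corollary is essentially a bookkeeping exercise, and all the substantive work — extending $h$ to a $\delta$-functor, computing $\chi$ via tight resolutions, and using the Poisson-summation-style estimates from \ref{thm:coset_mass_4} — has already been carried out in Propositions \ref{thm:tight_volume_chi} and \ref{thm:volume_chi}. The only thing worth double-checking is the sign convention, since $\chi(L) = -\ln(\vol(L))$ has the opposite sign from what one might naively expect in an Euler-characteristic formula; once that is handled correctly the identity drops out in one line.
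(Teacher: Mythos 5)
Your proof is correct and matches the paper's (implicit) argument exactly: the corollary is stated without proof precisely because it falls out immediately from $\chi(L) = h^0(L) - h^1(L)$, Proposition \ref{thm:volume_chi}, and $h^0(L) = \ln(\rho(L))$. Nothing to add.
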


To summarize what we know about $h^k(L)$, we have:

\begin{theorem} \label{thm:explicit_cohomology}
	For any lattice $L$, we have that $h^0(L) = \ln(\rho(L))$, $h^1(L) = \ln(\rho(L)) + \ln(\vol(L))$, $h^k(L) = 0$ for $k \geq 2$, and $\chi(L) = -\ln(\vol(L))$.
\end{theorem}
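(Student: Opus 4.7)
The plan is to recognize that this theorem is essentially a summary statement that collects together the explicit formulas already established in the preceding results, so the proof is a matter of assembling them in the right order rather than developing new machinery.

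First, I would handle the two trivial cases. The identity $h^0(L) = \ln(\rho(L))$ is immediate: by Definition \ref{def:coh_exists2} we have $h^0 = \ell$, and the definition of $\ell$ on $\Lat$ given at the start of Section 8 is precisely $\ell(L) = \ln(\rho(L))$. Likewise, $h^k(L) = 0$ for $k \geq 2$ follows directly from Definition \ref{def:lattice_cohomology}, since $h^k(L) = \lim_{n \to \infty} h_\bullet^k(R_{L,n})$ and the resolution $R_{L,n}$ constructed in Definition \ref{def:canonical_resolutions} has all terms past index $1$ equal to zero, forcing $h_\bullet^k(R_{L,n}) = 0$ for $k \geq 2$.

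Next, I would invoke Proposition \ref{thm:volume_chi}, which gives $\chi(L) = -\ln(\vol(L))$ directly; nothing further is needed for the Euler characteristic claim. Finally, for $h^1(L)$, I would use Definition \ref{def:chi_lattice}, which tells us $\chi(L) = h^0(L) - h^1(L)$, and substitute the formulas already obtained: rearranging gives $h^1(L) = h^0(L) - \chi(L) = \ln(\rho(L)) + \ln(\vol(L))$. This is exactly the content of Corollary \ref{thm:explicit_h1}, so one can simply cite it.

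Since every ingredient is already in place and the argument is purely a collation, there is no real obstacle here — the theorem is stated mostly for bookkeeping, to display all four formulas together and to make explicit that the $h^k$ defined abstractly via tight resolutions coincide with the Van der Geer–Schoof quantities. If I wanted to be pedantic I would note that the agreement with \cite{VDGS} is a consequence of the formula for $h^1$ combined with the Poisson summation formula applied to $\rho$ on $L$ and its dual, but that comparison is a remark rather than part of the proof itself.
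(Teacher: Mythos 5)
Your proposal is correct and matches the paper's approach exactly: the theorem is a summary statement, with $h^0$ and $h^k$ for $k\geq 2$ being immediate from the definitions, $\chi(L)=-\ln(\vol(L))$ coming from Proposition~\ref{thm:volume_chi}, and $h^1$ following as in Corollary~\ref{thm:explicit_h1}. The paper itself presents this theorem with no separate proof, preceded only by the phrase ``To summarize what we know about $h^k(L)$,'' so your collation is precisely what is intended.
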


\begin{remark}
	In \cite{VDGS}, $h^0$ of an Arakelov divisor $D$ is defined, with different notation, as $\ln(\rho(L))$, with $L$ the underlying lattice of $D$. They also propose the definition $h^1(D) = h^0(K - D)$, with $K$ the canonical divisor. It's pointed out that the corresponding lattice of $K - D$ is the dual lattice of $L$, and so it follows from the Poisson summation formula that $h^1(D) = \ln(\rho(L)) + \ln(\vol(L))$. They also define $\chi(D)$ as $-\ln(\vol(L))$. So, the definitions in \cite{VDGS} for the cohomology and Euler characteristic of an Arakelov divisor are consistent with the definitions proposed in this paper for the cohomology and Euler characteristic of a lattice.
\end{remark}

\begin{acknowledgements}
	Some of this work was completed at the Pacific Science Institute, so I'd like to thank Garrett Lisi and Crystal Baraniuk for creating a very pleasant working environment there.
	
	The latter part of this paper depends on a result recently proven by Oded Regev and Noah Stephens-Davidowitz. If they hadn't proven it, I'd probably still be stuck, so I'd like to thank them for their contribution.	
	
	I would also like to thank James Borger for helpful comments and proofreading.	
\end{acknowledgements}

\end{document}